\newtheorem{theorem}{Theorem}[section]
\newtheorem{definition}[theorem]{Definition}
\newtheorem{lemma}[theorem]{Lemma}
\newtheorem{corollary}[theorem]{Corollary}
\newtheorem{question}[theorem]{Question}
\newtheorem{remark}[theorem]{Remark}
\newtheorem{observation}[theorem]{Observation}
\newenvironment{proof}[1][Proof]{\begin{trivlist}
\item[\hskip \labelsep {\bfseries #1}]}{\end{trivlist}}
\newcommand{\qed}{\nobreak \ifvmode \relax \else
      \ifdim\lastskip<1.5em \hskip-\lastskip
      \hskip1.5em plus0em minus0.5em \fi \nobreak
      \vrule height0.75em width0.5em depth0.25em\fi}
\begin{document}
\title{The set-theoretic Kaufmann-Clote question} 
\author{Zachiri McKenzie\\
University of Chester\\
{\tt z.mckenzie@chester.ac.uk}}
\maketitle

\begin{abstract}
Let $\mathsf{M}$ be the set theory obtained from $\mathsf{ZF}$ by removing the collection scheme, restricting separation to $\Delta_0$-formulae and adding an axiom asserting that every set is contained in a transitive set. Let $\Pi_n\textsf{-Collection}$ denote the restriction of the collection scheme to $\Pi_n$-formulae. In this paper we prove that for $n \geq 1$, if $\mathcal{M}$ is a model of $\mathsf{M}+\Pi_n\textsf{-Collection}+\mathsf{V=L}$ and $\mathcal{N}$ is a $\Sigma_{n+1}$-elementary end extension of $\mathcal{M}$ that satisfies $\Pi_{n-1}\textsf{-Colelction}$ and that contains a new ordinal but no least new ordinal, then $\Pi_{n+1}\textsf{-Collection}$ holds in $\mathcal{M}$. This result is used to show that for $n \geq 1$, the minimum model of $\mathsf{M}+\Pi_n\textsf{-Collection}$ has no $\Sigma_{n+1}$-elementary end extension that satisfies $\Pi_{n-1}\textsf{-Collection}$, providing a negative answer to the generalisation of a question posed by Kaufmann.  
\end{abstract}

\section[Introduction]{Introduction}

Paris and Kirby \cite{pk78} reveal the connection between fragments of the arithmetic collection scheme and the existence of proper $\Sigma_n$-elementary end extensions of models of a weak subsystem of arithmetic. They show that if $\mathcal{M}$ is a countable model of $I\Delta_0$, then for all $n \in \omega$ with $n \geq 1$, $\mathcal{M}$ satisfies $B\Sigma_{n+1}$ if and only if $\mathcal{M}$ has a proper $\Sigma_{n+1}$ elementary end extension. Here $I\Delta_0$ is the subsystem of Peano Arithmetic obtained by restricting induction to bounded formulae, and $B\Sigma_n$ is obtained from $I\Delta_0$ by adding the arithmetic collection scheme for $\Sigma_n$-formulae. Let $\mathsf{M}^-$ be set theory obtain from {\it Kripke-Platek Set Theory} by weakening the foundation available to only {\it set foundation}, removing $\Delta_0\textsf{-Collection}$ and adding an axiom asserting that every set is contained in a transitive set (see section \ref{Sec:Background} below for details). In \cite[Theorem 1]{kau81}, Kaufmann proves the following set-theoretic analogue of Paris and Kirby's result:

\begin{theorem} \label{Th:KaufmannTheorem}
(Kaufmann) Let $n \in \omega$ with $n \geq 1$. Let $\mathcal{M}= \langle M, \in \rangle$ be a model of $\mathsf{M}^-$. Consider 
\begin{itemize}
\item[(I)] there exists $\mathcal{N}= \langle N, \in^{\mathcal{N}} \rangle$ such that $\mathcal{M} \prec_{e, n+1} \mathcal{N}$ and $M\neq N$;
\item[(II)] $\mathcal{M} \models \Pi_n\textsf{-Collection}$. 
\end{itemize}
If $\mathcal{M} \models \mathsf{V=L}$, then $(I) \Rightarrow (II)$. And, if $M$ is countable, then $(II) \Rightarrow (I)$. \Square 
\end{theorem}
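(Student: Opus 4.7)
The strategy is to handle the two implications separately: the forward direction leans on $V=L$ (to parametrise sets by ordinals and pin down the Lévy complexity of witnesses), and the backward direction uses $\Pi_n$-Collection together with countability of $M$ (to run a Henkin-style omitting-types argument). Throughout I will use the standard convention that bounded quantifiers do not contribute to Lévy complexity.

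For $(I) \Rightarrow (II)$, I would argue by contradiction. Suppose $\mathcal{M} \prec_{e,n+1} \mathcal{N}$ with $M \neq N$, $\mathcal{M}\models V=L$, but $\Pi_n$-Collection fails in $\mathcal{M}$ at a $\Pi_n$-formula $\phi(x,y,\bar p)$ and a set $a\in M$: so $\mathcal{M}\models\forall x\in a\,\exists y\,\phi(x,y,\bar p)$ while no $b\in M$ bounds the witnesses. Since $V=L$ is $\Pi_2$ it lifts to $\mathcal{N}$ (using $n\geq 1$), and an end extension cannot insert new elements into old $L_\gamma$'s, so the existence of a new element in $N$ forces a new ordinal $\beta \in \mathrm{Ord}^{\mathcal{N}}\setminus\mathrm{Ord}^{\mathcal{M}}$. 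Every old ordinal lies strictly below $\beta$ (else $\beta$ would belong to an old ordinal and hence be old). For each $x \in a$, the $\Sigma_{n+1}$-statement $\exists\alpha\,\exists y\in L_\alpha\,\phi(x,y,\bar p)$ is true in $\mathcal{M}$ with some least witness $\alpha_x \in \mathrm{Ord}^{\mathcal{M}}$, and by $\Sigma_{n+1}$-elementarity it transfers to $\mathcal{N}$ with the same $\alpha_x$; since $\alpha_x < \beta$, this yields $\mathcal{N}\models\forall x\in a\,\exists y\in L_\beta\,\phi(x,y,\bar p)$. Consequently $\mathcal{N} \models \exists\gamma\,\forall x\in a\,\exists y\in L_\gamma\,\phi(x,y,\bar p)$; the quoted sentence is $\Sigma_{n+1}$ (the only unbounded quantifier prepended to $\phi$ is the outer $\exists\gamma$), so downward $\Sigma_{n+1}$-elementarity sends it back to $\mathcal{M}$, producing a bound $b = L_\gamma$ and contradicting the failure of collection.

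For $(II) \Rightarrow (I)$, with $M$ countable, I would run a Henkin/MacDowell--Specker construction modelled on Paris--Kirby. Enumerate $M = \{a_i : i \in \omega\}$ and enumerate all pairs $(\phi_i(v,\bar y), \bar e_i)$ with $\phi_i \in \Sigma_{n+1}$ and $\bar e_i \in M^{<\omega}$. By recursion on $i$, construct a descending sequence of nonempty $\Sigma_{n+1}$-definable (over $\mathcal{M}$, with parameters) classes $C_0 \supseteq C_1 \supseteq \cdots$ such that (a) $C_{i+1}$ uniformly decides $\phi_i(v,\bar e_i)$ for $v\in C_{i+1}$, and (b) $C_{i+1}\cap a_i = \emptyset$ and $a_i \notin C_{i+1}$. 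The resulting filter generates a complete type $p(v)$ extending the $\Sigma_{n+1}$-theory of $(\mathcal{M},a)_{a\in M}$, and a Henkin completion (using countability of $M$) produces the desired $\mathcal{N}$ with $\mathcal{M} \prec_{e,n+1} \mathcal{N}$ and $M \neq N$. The pivotal use of $\Pi_n$-Collection is in step (a): to decide $\phi_i = \exists z\,\psi_i$ with $\psi_i \in \Pi_n$ uniformly on $C_i$, collection lets the $\Pi_n$-witnesses be absorbed into a single set, reducing the uniform decision to an existence statement about a single parameter and thereby preserving nonemptiness of $C_{i+1}$.

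The principal obstacle will be the second direction: balancing the Henkin decisions against the diagonalisation requirements of (b) demands careful bookkeeping to keep each $C_i$ nonempty, and fitting the construction inside exactly $\Pi_n$-Collection (rather than a stronger fragment) requires a precise complexity tracking at every step. The first direction, by contrast, reduces to a clean calculation once one observes that $\exists y \in L_\gamma$ functions as an effectively bounded existential under $V=L$.
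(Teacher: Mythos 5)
First, note that the paper does not prove this statement: it is quoted from Kaufmann's paper \cite{kau81} and marked as such, so your proposal has to be judged on its own merits rather than against an in-paper argument.

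There is a genuine gap at the pivot of your $(I)\Rightarrow(II)$ argument, namely the claim that $\exists\gamma\,(\forall x\in a)(\exists y\in L_\gamma)\,\phi(x,y,\bar p)$ is $\Sigma_{n+1}$ ``because the only unbounded quantifier prepended to $\phi$ is the outer $\exists\gamma$.'' The convention that bounded quantifiers do not raise L\'{e}vy complexity is not a syntactic fact; it is a theorem scheme whose proof \emph{is} collection. A bounded existential in front of a $\Pi_n$-formula absorbs for free, so $(\exists y\in L_\gamma)\phi$ is $\Sigma_{n+1}$, but the bounded universal $(\forall x\in a)$ in front of that $\Sigma_{n+1}$-formula is syntactically $\Pi_{n+2}$, and pushing it inside requires exactly $\Sigma_{n+1}\textsf{-Collection}$ ($=\Pi_n\textsf{-Collection}$) --- the very scheme you are trying to establish in $\mathcal{M}$, and one you have no grounds to assume in $\mathcal{N}$ (which, being an arbitrary proper $\Sigma_{n+1}$-elementary end extension of a model of $\mathsf{M}^-$, need satisfy no collection at all). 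With the outer $\exists\gamma$ the sentence is syntactically $\Sigma_{n+3}$, well beyond what $\prec_{n+1}$ transfers; compare Lemma \ref{Th:ComplexityOfCollection} of the paper, where even \emph{assuming} $\Pi_{n-1}\textsf{-Collection}$ an instance of $\Pi_n\textsf{-Collection}$ only compresses to $\Pi_{n+3}$. Moreover, even if the equivalence to a $\Sigma_{n+1}$-formula held modulo some theory $T$, the downward transfer would require both $\mathcal{M}$ and $\mathcal{N}$ to satisfy $T$. This is not a technicality to be smoothed over: if bounded quantifiers really absorbed for free here, the theorem would be almost trivial, whereas Kaufmann's actual argument has to work around precisely this point (via resolvability and a more careful reflection); note also that your appeal to a \emph{least} $\alpha_x$ silently uses $\Sigma_{n+1}\textsf{-Foundation}$, which $\mathsf{M}^-$ does not provide (though any witness would do there).

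For $(II)\Rightarrow(I)$ your outline (a Henkin/omitting-types construction in the style of Paris--Kirby, with $\Pi_n\textsf{-Collection}$ used to decide $\Sigma_{n+1}$-formulae on nonempty definable classes) is the right general shape, but the step that actually makes the extension an \emph{end} extension is missing: it is not enough that the generic element $v$ avoids each $a_i$; one must arrange that every element of the new model (in particular every Henkin witness) that falls $\in$ an old set is identified with an old element, i.e.\ one must omit, for each $a\in M$, the type $\{x\in a\}\cup\{x\neq b: b\in^{\mathcal{M}}a\}$, and verifying the omitting-types criterion is where $\Pi_n\textsf{-Collection}$ (and the $\Sigma_{n+1}$ satisfaction predicate it makes available) does its real work. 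As written, both halves of the proposal stop short of the actual content of the theorem.
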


In \cite{kau81}, it is not assumed that $\mathcal{M}$ satisfies set foundation, and $(I) \Rightarrow (II)$ is proved from the weaker assumption that $\mathcal{M}$ is {\it resolvable}. Theorem \ref{Th:KaufmannTheorem} above also reformulates \cite[Theorem 1]{kau81} by making use of the equivalence of collection for $\Pi_n$-formulae with collection for $\Sigma_{n+1}$-formulae.

A result due to Simpson (see \cite[Remark 2]{kau81}) shows that for countable limit ordinals $\alpha$, $\langle L_\alpha, \in \rangle$ has a $\Sigma_1$-elementary end extension $\mathcal{N}$ that satisfies Kripke-Platek Set Theory plus $\mathsf{V=L}$ with a new ordinal but no least new ordinal if and only if $\langle L_\alpha, \in \rangle$ satisfies $\Pi_1\textsf{-Collection}$. A consequence is that there is a transitive model of Kripke-Platek Set Theory with no proper $\Sigma_1$-elementary end extension that satisfies Kripke-Platek Set Theory. Using a generalisation of Simpson's result, \cite[\S 3]{mck25} shows that there are transitive models of $\Pi_n\textsf{-Collection}$ that have no proper $\Sigma_{n+1}$-elementary end extensions that satisfy $\Pi_n\textsf{-Collection}$ (indeed, these transitive models can even satisfy powerset and full separation). Kaufmann \cite[p. 102]{kau81} asks:

\begin{question} \label{Q:KaufmannQuestion}
If $\langle L_\alpha, \in \rangle$ has a proper $\Sigma_2$-elementary extension, does it necessarily have one satisfying $\Delta_0\textsf{-Collection}$?  
\end{question}

Using Theorem \ref{Th:KaufmannTheorem}, the restriction of Question \ref{Q:KaufmannQuestion} to countable ordinals, $\alpha$, asks: Does every $\langle L_\alpha, \in \rangle$ that satisfies $\Pi_1\textsf{-Collection}$ have a proper $\Sigma_2$-elementary elementary end extension that satisfies $\Delta_0\textsf{-Collection}$? Clote \cite[p. 39]{clo85} (see also \cite[Problem 33]{ck93}) asks a generalised version of Kaufmann's Question in the context of arithmetic: For $n \in \omega$ with $n \geq 1$, does every countable model of $B\Sigma_{n+1}$ have proper $\Sigma_{n+1}$-elementary end extension that satisfies $B\Sigma_n$? This suggests the following set-theoretic generalisation of Question \ref{Q:KaufmannQuestion}:

\begin{question} \label{Q:GeneralisedKaufmannCloteQuestion}
(Set-theoretic Kaufmann-Clote Question) Let $n \in \omega$ with $n \geq 1$. Does every $\langle L_\alpha, \in \rangle$ such that $\alpha$ is countable and $\langle L_\alpha, \in \rangle$ satisfies $\Pi_n\textsf{-Collection}$ have a proper $\Sigma_{n+1}$-elementary end extension that satisfies $\Pi_{n-1}\textsf{-Collection}$? 
\end{question}

Let $\mathsf{M}$ be the set theory obtain by adding the powerset axiom to $\mathsf{M}^-$. This article provides a negative answer to Question \ref{Q:GeneralisedKaufmannCloteQuestion} (and, in particular, a negative answer to Question \ref{Q:KaufmannQuestion}) by showing that if $n \in \omega$ with $n \geq 1$ and $\alpha$ is the least ordinal such that $\langle L_\alpha, \in \rangle$ satisfies $\mathsf{M}+\Pi_n\textsf{-Collection}$, then $\langle L_\alpha, \in \rangle$ has no proper $\Sigma_{n+1}$-elementary end extension that satisfies $\Pi_{n-1}\textsf{-Collection}$. This result is obtain from the following strong variant of Simpson's result \cite[Remark 2]{kau81}:
   
\begin{theorem}[Main Theorem] \label{Th:MainTheorem}
Let $n \in \omega$ with $n \geq 1$. Let $\mathcal{M}$ be a model of $\mathsf{M}+\Pi_n\textsf{-Collection}+\mathsf{V=L}$. If $\mathcal{N}$ is such that $\mathcal{N}$ is a $\Sigma_{n+1}$-elementary end extension of $\mathcal{M}$ that satisfies $\Pi_{n-1}\textsf{-Collection}$ and $\mathcal{N}$ contains a new ordinal but no least new ordinal, then $\mathcal{M}$ satisfies $\Pi_{n+1}\textsf{-Collection}$. \Square
\end{theorem}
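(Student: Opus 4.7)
I would argue by contradiction: suppose $\mathcal{M} \not\models \Pi_{n+1}\textsf{-Collection}$, witnessed by a set $a \in M$, parameters $\bar{p} \in M$, and a $\Pi_{n+1}$-formula $\phi(x, y, \bar{p})$ such that $\mathcal{M} \models \forall x \in a\, \exists y\, \phi(x, y, \bar{p})$ but no $b \in M$ collects the witnesses. The goal is then to show that $\mathcal{N}$ must in fact possess a least new ordinal, contradicting the hypothesis.

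Working first inside $\mathcal{M}$, I would define $F(x)$ to be the least $\gamma \in \mathrm{Ord}^\mathcal{M}$ such that $L_\gamma^\mathcal{M}$ contains a witness $y$ to $\phi(x, y, \bar{p})$. Under $\mathsf{V=L} + \Pi_n\textsf{-Collection}$, the $\Sigma_1$-definable wellordering of $L$ together with $\Sigma_{n+1}$-Skolem machinery allows one to show (this being the principal technical work of the step) that the graph of $F$ is $\Sigma_{n+1}$-definable over $\mathcal{M}$. The failure of $\Pi_{n+1}\textsf{-Collection}$ then yields that $F[a]$ is cofinal in $\mathrm{Ord}^\mathcal{M}$, for otherwise any bound $\gamma^\ast$ would give $L_{\gamma^\ast}^\mathcal{M}$ as a collecting set.

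Next I would transfer the picture to $\mathcal{N}$. By $\Sigma_{n+1}$-elementarity, the $\Sigma_{n+1}$-graph of $F$ has the same meaning on parameters from $M$, so for every $x \in a$ the value $F^\mathcal{N}(x)$ exists in $\mathcal{N}$ and equals $F(x) \in \mathrm{Ord}^\mathcal{M}$. Using $\Pi_{n-1}\textsf{-Collection}$ in $\mathcal{N}$ --- with some complexity-lowering that exploits the fact that $F^\mathcal{N}|_a$ only takes old-ordinal values --- I would obtain $\delta \in \mathrm{Ord}^\mathcal{N}$ with $F^\mathcal{N}[a] \subseteq \delta$. Such $\delta$ cannot be old, since that would restore collection in $\mathcal{M}$; hence $\delta$ is new. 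Because $\mathcal{N}$ is an end extension, every new ordinal bounds $F^\mathcal{N}[a] \subseteq \mathrm{Ord}^\mathcal{M}$, so once $F^\mathcal{N}[a]$ is realised as a set in $\mathcal{N}$ its supremum $\sigma := \sup_\mathcal{N} F^\mathcal{N}[a]$ is the least ordinal of $\mathcal{N}$ above all old ordinals, and hence a least new ordinal --- contradicting the hypothesis on $\mathcal{N}$.

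The main obstacle, I expect, is the complexity calculation in the first step: ensuring that under $\mathsf{V=L} + \Pi_n\textsf{-Collection}$ the function $F$ admits a $\Sigma_{n+1}$-definition rather than the naive $\Sigma_{n+2}$ one, so that both the $\Sigma_{n+1}$-elementarity transfer and the subsequent bounding in $\mathcal{N}$ (with only $\Pi_{n-1}\textsf{-Collection}$ available) are valid. This delicate matching of one extra Skolem step in $\mathcal{M}$ against one missing collection step in $\mathcal{N}$ strikes me as the technical heart of the argument.
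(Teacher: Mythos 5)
Your overall shape---``if $\Pi_{n+1}\textsf{-Collection}$ fails, manufacture a least new ordinal as the supremum of the old ordinals''---is a reasonable contrapositive to aim for, but the steps you yourself flag as the technical heart are precisely where the argument breaks, and they cannot be repaired along the lines you sketch. First, your function $F$ is not even well defined in $\mathcal{M}$ a priori: choosing the \emph{least} $\gamma$ with $(\exists y \in L_\gamma)\,\phi(x,y,\bar{p})$ is an instance of foundation for a formula that is (at best) $\Pi_{n+1}$ over $\mathsf{M}+\Pi_n\textsf{-Collection}$, and by Theorem \ref{Th:CollectionDoesNotProveFoundation} the base theory $\mathsf{M}+\Pi_n\textsf{-Collection}+\mathsf{V=L}$ does not prove $\Pi_{n+1}\textsf{-Foundation}$; the paper must first \emph{derive} this foundation in $\mathcal{M}$ from the existence of the end extension (Theorem \ref{Th:SeparationInBaseModel}). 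Second, the graph of $F$ is naturally the conjunction of a $\Pi_{n+1}$-formula ($L_\gamma$ contains a witness) with a $\Sigma_{n+1}$-formula (no smaller level does), and nothing in this fragment provides the $\Sigma_{n+1}$-uniformization needed to compress that to a single $\Sigma_{n+1}$-definition; the fine-structural Skolem machinery you allude to is not available over $\mathsf{M}+\Pi_n\textsf{-Collection}$. Third, and most seriously, even granting a $\Sigma_{n+1}$-graph, bounding $F[a]$ inside $\mathcal{N}$ is an instance of $\Sigma_{n+1}\textsf{-Collection}$, which is equivalent to $\Pi_n\textsf{-Collection}$, whereas $\mathcal{N}$ is only assumed to satisfy $\Pi_{n-1}\textsf{-Collection}$. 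The observation that $F$ takes only old-ordinal values does not lower the complexity of the formula to which collection must be applied, and forming $\sup_{\mathcal{N}} F^{\mathcal{N}}[a]$ additionally requires the image to be a set, i.e.\ $\Sigma_{n+1}\textsf{-Separation}$ in $\mathcal{N}$. This two-level mismatch is exactly the obstruction the whole paper is organised around: the direct witness-function argument is what yields Kaufmann's $(I)\Rightarrow(II)$, and it tops out at $\Pi_n\textsf{-Collection}$.

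The paper's proof takes a structural detour rather than a definability one. After establishing $\Sigma_{n+1}\textsf{-Separation}$ in $\mathcal{M}$ (Theorem \ref{Th:SeparationInBaseModel}) and producing a new cardinal $\kappa$ in $\mathcal{N}$ (Lemma \ref{Th:NewCardinalInExtension}), it locates an intermediate structure---$\langle (L_\kappa^{\mathcal{N}})^*, \in^{\mathcal{N}} \rangle$ when $n=1$, and $\langle (L_\alpha^{\mathcal{N}})^*, \in^{\mathcal{N}} \rangle$ for a suitable new $\alpha$ with $\mathcal{N} \models (L_\alpha \prec_n \mathbb{V})$ when $n \geq 2$---which satisfies $\mathsf{KPI}+\mathsf{Separation}$, is a $\Sigma_{n+1}$-elementary end extension of $\mathcal{M}$, and still has a new ordinal but no least one. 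Full \textsf{Separation} in this intermediate model supplies all the foundation your direct argument lacks, and the conclusion is then delivered by the external Theorem \ref{Th:SimpsonResult}. As written, your proposal identifies the right difficulty but does not overcome it, so it does not constitute a proof.
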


In contrast, recent work of Sun \cite{sunXX} shows that if $n \in \omega$ with $n \geq 1$, then every countable model of $B\Sigma_{n+1}$ has a proper $\Sigma_{n+1}$-elementary end extension that satisfies $B\Sigma_n$, providing a positive answer to Clote's question \cite[p. 39]{clo85}.

Section \ref{Sec:ComplexityCalculations} analyses the theory that $\Sigma_{n+1}$-elementarity transfers to a $\Sigma_{n+1}$-elementary end extension of a structure that satisfies $\mathsf{M}+\Pi_n\textsf{-Collection}$, showing that additional collection holding in the end extension allows us to transfer more foundation from the structure being extended. Section \ref{Sec:BaseCase} proves the base case ($n=1$) of the Main Theorem, and section \ref{Sec:GeneralCase} proves the Main Theorem for $n \geq 2$.

\section[Background]{Background} \label{Sec:Background}

Throughout this paper $\mathcal{L}$ will denote the language of set theory- first-order logic with equality ($=$) and a binary relation symbol $\in$. As usual, $\Delta_0(=\Sigma_0=\Pi_0)$, $\Sigma_1$, $\Pi_1$, \ldots denote the L\'{e}vy classes $\mathcal{L}$-formulae. Takahashi \cite{tak72} introduces the class $\Delta_0^{\mathcal{P}}$ extending $\Delta_0$ that consists of all $\mathcal{L}$-formulae whose quantifiers are bounded either by the membership relation ($\in$) or the subset relation ($\subseteq$). Let $T$ be a theory in a language that contains $\mathcal{L}$ and let $\Gamma$ be a collection of $\mathcal{L}$-formulae. We use $\Gamma^T$ to denote the collection of $\mathcal{L}$-formulae that $T$ proves are equivalent to a formula in $\Gamma$. We use $\Delta_n^T$ to denote the collection of formulae that $T$ proves are equivalent to both a $\Sigma_n$-formula and a $\Pi_n$-formula.  

We use \textsf{Separation} and \textsf{Collection} to denote the usual set-theoretic separation and collection schemes, respectively, that are used to axiomatise $\mathsf{ZF}$. \textsf{Foundation} denotes the theorem scheme of $\mathsf{ZF}$ asserting that every non-empty definable (with parameters) class has an $\in$-minimal element. If $\Gamma$ is a collection of formulae, the we use $\Gamma\textsf{-Separation}$, $\Gamma\textsf{-Collection}$ and $\Gamma\textsf{-Foundation}$ to denote the restrictions of the schemes \textsf{Separation}, \textsf{Collection} and \textsf{Foundation}, respectively, to formulae in $\Gamma$. We use  \textsf{Set-Foundation} to denote the single axiom asserting that every set has an $\in$-minimal element. 

Let $\mathsf{M}^-$ be the $\mathcal{L}$-theory with axioms:
\begin{center}
\begin{tabular}{ll}
(\textsf{Extensionality}) & $\forall x \forall y (x=y \iff \forall z(z \in x \iff z \in y)$;\\
(\textsf{Pair}) & $\forall x \forall y \exists z(x \in z \land y \in z \land (\forall w \in z)(w=x \lor w=y))$;\\
(\textsf{Union}) & $\forall x \exists y((\forall z \in x)(\forall w \in z)(w \in y) \land (\forall w \in y)(\exists z \in x)(w \in z))$;\\
(\textsf{Emptyset}) & $\exists x (\forall y \in x)(y \neq y)$;\\
(\textsf{Infinity}) & $\exists x \left( \emptyset \in x \land (\forall y \in x)(\exists z \in x) \left(\begin{array}{c}
(\forall w \in z)(w = y \lor w \in y) \land\\
(\forall w \in y)(w \in z) \land y \in z
\end{array} \right)\right)$;\\
(\textsf{TCo}) & $\forall x \exists y ((\forall z \in x)(z \in y) \land (\forall z \in y)(\forall w \in z)(w \in y))$;\\
(\textsf{Set-Foundation}) & $\forall x((\exists w \in x)(w=w) \Rightarrow (\exists y \in x)(\forall z \in y)(z \notin x))$;\\
($\Delta_0\textsf{-Separation}$) & for all $\Delta_0$-formulae, $\phi(x, \vec{z})$,\\ 
& $\forall \vec{z} \forall w \exists y ((\forall x \in y)(x \in w \land \phi(x, \vec{z})) \land (\forall x \in w)(\phi(x, \vec{z}) \Rightarrow x \in y))$.
\end{tabular}
\end{center}  
This makes it clear that $\mathsf{M}^-$ is axiomatised by a collection of $\Pi_2$-sentences. The theory $\mathsf{M}$ is obtained from $\mathsf{M}^-$ by adding the $\Pi_3$-sentence
\begin{itemize}
\item[](\textsf{Powerset})\ \ \ $\forall x \exists y \forall z(z \in y \iff z \subseteq x)$.
\end{itemize}  
{\it Kripke-Platek Set Theory ($\mathsf{KP}$)} is obtained from $\mathsf{M}^-$ by removing \textsf{Infinity} and adding $\Delta_0\textsf{-Collection}$ and $\Pi_1\textsf{-Foundation}$. The theory $\mathsf{KPI}$ is obtained from $\mathsf{KP}$ by adding \textsf{Infinity}. This  differs from \cite{bar75} and \cite{fri73} where Kripke-Platek Set Theory is defined to include \textsf{Foundation} instead of the more restrictive $\Pi_1\textsf{-Foundation}$. It should also be noted that $\mathsf{TCo}$ is not usually included as an axiom of either $\mathsf{KP}$ or $\mathsf{KPI}$. However, as can be seen from verifying that the proof of \cite[Theorem I.6.1]{bar75} uses no more than $\Pi_1\textsf{-Foundation}$, $\mathsf{TCo}$ is a consequence of the other axioms of $\mathsf{KP}$ making its addition here superfluous. 
%Indeed, $\mathsf{KP}$ proves that every set, $x$, is contained in an $\subseteq$-minimal transitive set called the {\bf transitive closure} of $x$ and denoted $\mathsf{TC}(x)$. 

The presence of $\mathsf{TCo}$ and \textsf{Set-Foundation} in $\mathsf{M}^-$ means that for any collection of formulae $\Gamma$, $\Gamma\textsf{-Separation}$ implies $\Gamma\textsf{-Foundation}$. The axiom $\mathsf{TCo}$ also facilitates the equivalence between \textsf{Foundation} and set induction. In the theory $\mathsf{M}^-$, $\Gamma\textsf{-Foundation}$ is equivalent to set induction for all formulae in $\neg \Gamma= \{ \neg \phi \mid \phi \in \Gamma \}$. It is well known (by generalising the proof \cite[Theorem I.4.4]{bar75}, for example) that, over the theory $\mathsf{M}^-$, $\Pi_n\textsf{-Collection}$ is equivalent to $\Sigma_{n+1}\textsf{-Collection}$. \cite[Theorem 4.13]{flw16} shows that the theory $\mathsf{M}^-+\Pi_n\textsf{-Collection}$ proves the following separation scheme:
\begin{itemize}
\item[]($\Delta_{n+1}\textsf{-Separation}$) for all $\Sigma_{n+1}$-formuale, $\phi(x, \vec{z})$, and for all $\Pi_{n+1}$-formulae, $\psi(x, \vec{z})$,
\[
\forall \vec{z}(\forall v(\phi(v, \vec{z}) \iff \psi(v, \vec{z})) \Rightarrow \forall w \exists y \forall x(x \in y \iff x \in w \land \phi(x, \vec{z}))).
\] 
\end{itemize}
Another important consequence of \textsf{Collection} is that the levels of L\'{e}vy hierarchy are essentially closed under bounded quantification. In particular, \cite[Proposition 2.4]{flw16} notes that if $T$ is an $\mathcal{L}$-theory that proves $\mathsf{M}^-+\Pi_n\textsf{-Collection}$, then both $\Sigma_{n+1}^T$ and $\Pi_{n+1}^T$ are closed under bounded quantification.

We will use $\mathcal{M}, \mathcal{N}, \ldots$ to denote structures. If $\mathcal{M}$ is an $\mathcal{L}$-structure, then we will use $M$ to denote the underlying set of $\mathcal{M}$ and $\in^{\mathcal{M}}$ to denote the interpretation of $\in$ in $\mathcal{M}$, i.e. $\mathcal{M}= \langle M, \in^{\mathcal{M}} \rangle$. Let $\mathcal{M}= \langle M, \in^{\mathcal{M}} \rangle$ and $\mathcal{N}= \langle N, \in^{\mathcal{N}} \rangle$. We say that $\mathcal{M}$ is a {\bf substructure} of $\mathcal{N}$, and, abusing notation, write $\mathcal{M} \subseteq \mathcal{N}$, if $M \subseteq N$ and $\in^{\mathcal{N}}$ agrees with $\in^{\mathcal{M}}$ on $M \times M$. We say that $\mathcal{M}$ is an {\bf ($\Sigma_n$-)elementary substructure} of $\mathcal{N}$, and write $\mathcal{M} \prec \mathcal{N}$ ($\mathcal{M} \prec_n \mathcal{N}$, respectively) if $\mathcal{M} \subseteq \mathcal{N}$ and for all $\mathcal{L}$-formulae ($\Sigma_n$-formulae, respectively), $\phi(\vec{x})$, and for all all $\vec{a} \in M$, $\mathcal{M} \models \phi(\vec{a})$ if and only if $\mathcal{N} \models \phi(\vec{a})$. We say $\mathcal{N}$ is an {\bf end extension} of $\mathcal{M}$, and write $\mathcal{M} \subseteq_e \mathcal{N}$, if $\mathcal{M} \subseteq \mathcal{N}$ and for all $y \in M$ ad for all $x \in N$, if $\mathcal{N} \models x \in y$, then $x \in M$. The structure $\mathcal{N}$ is a {\bf powerset-preserving end extension} of $\mathcal{M}$, written $\mathcal{M} \subseteq_e^{\mathcal{P}} \mathcal{N}$, if $\mathcal{M} \subseteq_e \mathcal{N}$ and for all $y \in M$ and for  all $x \in N$, if $\mathcal{N} \models (x \subseteq y)$, then $x \in M$. We call $\mathcal{N}$ a {\bf $\Sigma_n$-elementary end extension} of $\mathcal{M}$, and write $\mathcal{M} \prec_{e, n} \mathcal{N}$, if $\mathcal{M} \subseteq_e \mathcal{N}$ and $\mathcal{M} \prec_n \mathcal{N}$. Note that $\Delta_0$-properties of points in an $\mathcal{L}$-structure $\mathcal{M}$ are preserved in every end extension, and $\Delta_0^{\mathcal{P}}$-properties of points in an $\mathcal{L}$-structure $\mathcal{M}$ are preserved in every powerset-preserving end extensions.

Both $\mathsf{KP}$ and $\mathsf{M}$ are capable of defining a ternary relation $\mathbin{\models}(u, m, x)$ (we write \mbox{$\langle u, \in \rangle \models \phi(x_1, \ldots, x_k)$} instead of $\mathbin{\models}(u, m, x)$, where $m = \ulcorner \phi(v_1, \ldots, v_k) \urcorner$ and $x= \langle x_1, \ldots, x_k \rangle$) that expresses that the formula with G\"{o}del code $m$ and parameters given by $x$ holds in the structure $\langle u, \in \rangle$. This formula is $\Delta_1^{\mathsf{KP}}$. For $\Gamma= \Sigma_n \textrm{ or } \Pi_n$, the theory $\mathsf{KP}$ defines a formula $\mathsf{Sat}_\Gamma(m, x)$ asserting that $m$ is the G\"{o}del code of a $\Gamma$-formula $\phi(v_1, \ldots, v_k)$, $x= \langle x_1, \ldots, x_k \rangle$ and $\phi(x_1, \ldots, x_k)$ holds. The formula $\mathsf{Sat}_{\Delta_0}(n, x)$ is $\Delta_1^{\mathsf{KP}}$, and if $\Gamma= \Sigma_n \textrm{ or } \Pi_n$ and $n \geq 1$, then $\mathsf{Sat}_\Gamma(m, x)$ is $\Gamma^{\mathsf{KP}}$. We use $X \prec_n \mathbb{V}$ to abbreviate the formula (with parameter $\omega$):
\[
(\forall m \in \omega)(\forall x \in X^{<\omega})(\mathsf{Sat}_{\Sigma_n}(m, x) \Rightarrow \langle X, \in \rangle \models \mathsf{Sat}_{\Sigma_n}(m, x)),
\]     
asserting that $\langle X, \in \rangle$ is substructure of the universe that preserves $\Sigma_n$-properties of tuples in $X$. The formula $X \prec_n \mathbb{V}$ is $\Pi_n^{\mathsf{KPI}}$.

We use $\mathsf{Ord}$ to denote to class of ordinals. Verifying that the treatment in \cite[Chapter II]{bar75} appeals to no more than $\Pi_1\textsf{-Foundation}$ reveals that the theory $\mathsf{KP}$ is capable of defining G\"{o}del's constructible universe:
\[
L_0= \emptyset, L_\alpha= \bigcup_{\beta \in \alpha} L_\beta \textrm{ if } \alpha \textrm{ is a limit ordinal, and}
\]
\[
L_{\alpha+1}= L_\alpha \cup \mathsf{Def}(L_\alpha), \textrm{ and } L= \bigcup_{\alpha \in \mathsf{Ord}} L_\alpha,
\]
where $\mathsf{Def}(X)$ is the set of subsets of $X$ that are the extension of a first-order formulae (with parameter) according to the structure $\langle X, \in \rangle$. In particular, the function $\alpha \mapsto L_\alpha$ is provably total in $\mathsf{KP}$, and $\Delta_1^{\mathsf{KP}}$. The following axiom asserts that every set is member of the constructible universe and can be expressed as a $\Pi_2$-sentence in $\mathsf{KP}$:
\begin{itemize}
\item[]($\mathsf{V=L}$) $\forall x \exists \alpha (x \in L_\alpha)$.
\end{itemize}

\begin{definition}
Let $T$ be an $\mathcal{L}$-theory. A transitive set $M$ is the {\bf minimum model} of $T$ if $\langle M, \in \rangle \models T$ and for all transitive $N$ such that $\langle N, \in \rangle \models T$, $M \subseteq N$. 
\end{definition}

Gostanian \cite{gos80} shows that many natural subsystems of $\mathsf{ZF}$ have minimal models. In particular, the following is a consequence of \cite[Theorem 1.6(b)]{gos80}:

\begin{theorem} \label{Th:ExistenceOfMinimumModels}
Let $n \in \omega$ with $n \geq 1$. The theory $\mathsf{M}+\Pi_n\textsf{-Collection}$ has a minimum model. Moreover, the minimum model of $\mathsf{M}+\Pi_n\textsf{-Collection}$ is $L_\alpha$, where $\alpha \in \omega_1$. \Square 
\end{theorem}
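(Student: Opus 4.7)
The plan is to let $\alpha$ be the least ordinal with $\langle L_\alpha, \in \rangle \models T$, where $T$ abbreviates $\mathsf{M}+\Pi_n\textsf{-Collection}$; to verify that $\alpha$ exists and is countable; and then to show that $L_\alpha \subseteq M$ for every transitive $M$ with $\langle M, \in \rangle \models T$. For the existence of a countable such $\alpha$, I would work in the background $\mathsf{ZFC}$ and use that the class $L$ is a model of $T + \mathsf{V=L}$ admitting definable Skolem functions (via the canonical $<_L$-least witness construction). Forming a countable Skolem hull $X \prec L$ and applying Mostowski collapse yields a countable transitive set that is elementarily equivalent to $L$ and satisfies $\mathsf{V=L}$; because the hierarchy $\beta \mapsto L_\beta$ is $\Delta_1^{\mathsf{KP}}$ and therefore absolute, this collapse must coincide with $\langle L_\gamma, \in \rangle$ for some countable ordinal $\gamma$. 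Thus $\langle L_\gamma, \in \rangle \models T$, and the least $\alpha$ with $\langle L_\alpha, \in \rangle \models T$ satisfies $\alpha \leq \gamma < \omega_1$.

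For the minimality of $L_\alpha$, let $\langle M, \in \rangle$ be any transitive model of $T$. The essential input is the relativisation lemma that $T$ itself proves ``$L$ is a model of $T + \mathsf{V=L}$.'' Granted this, let $L^M$ denote the subclass of $M$ defined by the $\Delta_1$-formula for $L$. Because $\Pi_n\textsf{-Collection}$ with $n \geq 1$ includes $\Delta_0\textsf{-Collection}$, the theory $T$ suffices to carry out the development of the $L$-hierarchy exactly as in $\mathsf{KP}$; absoluteness of the $\Delta_1^{\mathsf{KP}}$ function $\beta \mapsto L_\beta$ between $\langle M, \in \rangle$ and the real universe then forces $L^M = L_\beta$ externally, where $\beta = M \cap \mathrm{Ord}$. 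The relativisation lemma gives $\langle L_\beta, \in \rangle \models T$, whence by minimality of $\alpha$ we have $\alpha \leq \beta$, and so $L_\alpha \subseteq L_\beta \subseteq M$ as required.

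The principal obstacle is establishing the relativisation lemma itself. The $\mathsf{M}^-$ axioms and \textsf{Powerset} follow from the standard absoluteness of the $L$-hierarchy together with the $\Delta_1$-definability of $\mathsf{Def}$, which ensures $\mathsf{Def}(L_\beta) \subseteq L_{\beta+1}$. The crux is transferring $\Pi_n\textsf{-Collection}$ to $L$: given $a \in L$ and $\phi \in \Pi_n$ with $(\forall x \in a)(\exists y)\,\phi^L(x,y)$, one applies $\Pi_n\textsf{-Collection}$ in the ambient model to obtain a set $B$ with $(\forall x \in a)(\exists y \in B)\,\phi^L(x,y)$, and then uses the totality and $\Delta_1$-character of the $L$-rank function to replace $B$ by some $L_\beta$, concluding $(\forall x \in a)(\exists y \in L_\beta)\,\phi^L(x,y)$. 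This is a direct adaptation of the classical proof that $L \models \mathsf{ZFC}$; alternatively, as indicated in the paper, the entire conclusion is available as a special case of Gostanian \cite[Theorem 1.6(b)]{gos80}.
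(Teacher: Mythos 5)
Your proposal is correct in outline but takes a genuinely different route from the paper, which offers no proof at all: it simply derives the statement as a special case of Gostanian's Theorem 1.6(b), and elsewhere treats the relativisation fact (``$\mathsf{M}+\Pi_n\textsf{-Collection}$ proves that it holds in its own constructible universe'') as a known black box. You instead supply a direct two-part argument --- hull-and-collapse for existence of a countable $L_\gamma \models T$, and absoluteness of the $L$-hierarchy plus the relativisation lemma for minimality --- which is the standard and sensible way to actually prove this. Two points need repair. First, you cannot literally form a Skolem hull $X \prec L$ of the proper class $L$ (Tarski's undefinability theorem blocks the uniform satisfaction predicate you would need); the fix is to use the paper's Lemma \ref{Th:ComplexityOfCollection}, by which $T$ is axiomatised by $\Pi_{n+3}$-sentences, apply L\'{e}vy reflection inside $L$ to get a set $L_\delta \prec_{n+3} L$ with $L_\delta \models T$, and only then take a countable hull of $L_\delta$ and condense. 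Second, your sketch of the relativisation lemma glosses over the real difficulty, which is not producing the bounding set $B$ but controlling the L\'{e}vy complexity of $\phi^L$ for $\phi \in \Pi_n$: each unbounded quantifier acquires a $\Sigma_1$ guard $\exists \alpha\,(y \in L_\alpha)$, and one must check (using $\Delta_{n+1}$-separation/collection consequences of $\Pi_n\textsf{-Collection}$, or an induction on $n$) that the relativised formula stays within reach of the collection available in the ambient model. This bookkeeping is precisely what the appeal to Gostanian buys the paper for free; your closing sentence deferring to that citation is therefore doing more work than the phrase ``alternatively'' suggests. With those two repairs the argument is complete, and the minimality half (absoluteness of $\beta \mapsto L_\beta$ between transitive models, $L^M = L_{\mathrm{Ord}\cap M}$, hence $\alpha \leq \mathrm{Ord}\cap M$) is exactly right.
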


In the presence of enough \textsf{Foundation}, the theory $\mathsf{M}+\Pi_{n+1}\textsf{Collection}$ is capable of building a transitive model of $\mathsf{M}+\Pi_n\textsf{-Collection}$. Theorem \ref{Th:RelativeConsistencyOfCollection} is a weakening of \cite[Theorem 4.4]{mck19} obtained by observing that $\Pi_{n}\textsf{-Foundation}$ implies that induction on the natural numbers holds for all $\Sigma_{n}$-formulae. 

\begin{theorem} \label{Th:RelativeConsistencyOfCollection}
Let $n \in \omega$ with $n \geq 1$. The theory $\mathsf{M}+\Pi_{n+1}\textsf{-Collection}+\Pi_{n+2}\textsf{-Foundation}$ proves that there exists a transitive model of $\mathsf{M}+\mathsf{Separation}+\Pi_n\textsf{-Collection}$.
\end{theorem}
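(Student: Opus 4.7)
The plan is to construct the required transitive model $M$ as the union of an $\omega$-indexed chain of transitive sets $\langle M_i : i \in \omega\rangle$ built inside the hypothesis theory, closely following the strategy of \cite[Theorem 4.4]{mck19}. Starting from $M_0 = V_\omega$, at stage $i+1$ I take $M_{i+1}$ to be a transitive set extending $M_i$ and closed under: (i) pairing, union, and powerset applied to elements of $M_i$; (ii) witnesses for existential formulas, i.e., for every formula $\exists y\,\psi(y,\vec{x})$ and every $\vec{a}\in M_i$, if the formula is satisfied in $V$ then some witness lies in $M_{i+1}$; (iii) satisfaction-definable subsets, i.e., $\{x\in b:(M_i,\in)\models\phi(x,\vec{a})\}\in M_{i+1}$ for every formula $\phi$ and all $b,\vec{a}\in M_i$; and (iv) witnesses to every instance of $\Pi_n\textsf{-Collection}$ with parameters in $M_i$. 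Each $M_{i+1}$ exists as a set because the objects required in (i)--(iv) are definable at complexity at most $\Sigma_{n+1}$, and $\Pi_{n+1}\textsf{-Collection}$ bounds them inside a single set, while $\mathsf{TCo}$ provides transitivity.

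The sequence $\langle M_i : i \in \omega\rangle$ is then defined by recursion on $\omega$, and its existence as a set requires induction on $\omega$ for a formula of complexity at most $\Sigma_{n+2}$ (capturing ``a partial chain of the required form exists up to step $i$''). This is precisely what $\Pi_{n+2}\textsf{-Foundation}$ delivers, via the equivalence between $\Pi_{n+2}\textsf{-Foundation}$ and $\Sigma_{n+2}$-induction on $\omega$ emphasised in the preamble to the theorem. Setting $M=\bigcup_{i \in \omega} M_i$ yields a transitive set, and properties (i) and (iv) transfer directly to $M$ to give $M\models\mathsf{M}+\Pi_n\textsf{-Collection}$. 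For Separation, given $\phi$ and $b,\vec{a}\in M$, pick $i$ with $b,\vec{a}\in M_i$; a standard Tarski--Vaught argument using (ii) iterated along the chain shows that $(M_j,\in)\models\phi(x,\vec{a})$ stabilises as $j$ grows and agrees with $(M,\in)\models\phi(x,\vec{a})$, so the definable subset recorded via (iii) at a late enough stage is exactly the set Separation demands in $M$.

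The main obstacle is ensuring that this Tarski--Vaught stabilisation holds uniformly in $\phi$ (not merely for formulas up to some fixed complexity) while simultaneously keeping the recursion defining $\langle M_i\rangle$ at complexity $\Sigma_{n+2}$, so that $\Pi_{n+2}\textsf{-Foundation}$ genuinely suffices in place of the full \textsf{Foundation} used in \cite[Theorem 4.4]{mck19}. The technical device I would use is to express the closures (ii) and (iii) uniformly in the G\"odel code $m$ of $\phi$ via the satisfaction predicates $\mathsf{Sat}_{\Sigma_m}$ available in $\mathsf{KP}$, bounding $m$ by the stage index $i$ at each step; this way each individual stage remains definable at the desired complexity level, while in the limit every formula is eventually treated, yielding full Separation in $M$.
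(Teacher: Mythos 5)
The paper gives no proof of this theorem---it is imported as a weakening of \cite[Theorem 4.4]{mck19}---so your reconstruction has to be judged on its own terms. Your overall architecture (an $\omega$-indexed chain of transitive sets whose existence is secured by $\Sigma_{n+2}$-induction on $\omega$, which is what $\Pi_{n+2}\textsf{-Foundation}$ supplies, with $\Pi_{n+1}\textsf{-Collection}$, i.e.\ $\Sigma_{n+2}\textsf{-Collection}$, bounding each stage) is the right one. But clause (ii) and your treatment of \textsf{Separation} contain a genuine gap. As literally stated, (ii) asks $M_{i+1}$ to contain witnesses for \emph{every} formula $\exists y\,\psi$ true in $V$ with parameters in $M_i$; by Tarski's undefinability of truth this closure condition is not expressible, so $M_{i+1}$ cannot be formed this way. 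You anticipate this and propose treating at stage $i$ only formulas of complexity at most $i$, via $\mathsf{Sat}_{\Sigma_i}$. But $\mathsf{Sat}_{\Sigma_i}$ itself has complexity $\Sigma_i$, so under this fix the formula ``a partial chain of the required form exists up to step $i$'' no longer has complexity bounded by $\Sigma_{n+2}$ uniformly in $i$---its complexity grows with $i$---and $\Pi_{n+2}\textsf{-Foundation}$ only yields induction on $\omega$ for $\Sigma_{n+2}$-formulae. So neither version of (ii) can be carried out with the resources the theorem permits, and the Tarski--Vaught stabilisation argument you use for full \textsf{Separation} collapses with it.

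The repair is to notice that full elementarity in $V$ is not needed anywhere. Only the $\Sigma_{n+1}$ level of (ii) is required, so that formulae of complexity at most $\Sigma_{n+1}$ with parameters in $M$ are absolute between $M$ and $V$---which is also what is needed for your clause (iv) to actually transfer $\Pi_n\textsf{-Collection}$ down to $M$, rather than merely recording bounding sets for instances as evaluated in $V$. This level of closure is expressible by the single predicate $\mathsf{Sat}_{\Sigma_{n+1}}$, keeps every stage of the recursion at a fixed complexity, and is handled by $\Sigma_{n+2}\textsf{-Collection}$. Full \textsf{Separation} in $M$ then comes for free from clause (i): $M$ is a transitive set closed under true power sets of its elements, the satisfaction relation $\mathbin{\models}(u,m,x)$ for set structures is $\Delta_1^{\mathsf{KP}}$, so each instance $\{x\in b: \langle M,\in\rangle\models\phi(x,\vec{a})\}$ exists as a set by $\Delta_1\textsf{-Separation}$ in the ambient universe and lies in $\mathcal{P}(b)\in M$, hence in $M$ by transitivity. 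Clause (iii) and the stabilisation argument can then be discarded entirely, and with them the need for any induction beyond $\Sigma_{n+2}$.
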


Theorems \ref{Th:ExistenceOfMinimumModels} and \ref{Th:RelativeConsistencyOfCollection} immediately imply that the minimum models of $\mathsf{M}+\Pi_n\textsf{-Collection}$ and $\mathsf{M}+\Pi_{n+1}\textsf{-Collection}$ do not coincide.

\begin{corollary} \label{Th:SeparationOfMinimumModels}
Let $n \in \omega$ with $n \geq 1$. Let $\alpha \in \omega_1$ be such that $L_\alpha$ is the minimum model of $\mathsf{M}+\Pi_n\textsf{-Collection}$. Then $\langle L_\alpha, \in \rangle$ does not satisfy $\Pi_{n+1}\textsf{-Collection}$. \Square
\end{corollary}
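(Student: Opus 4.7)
The plan is a short proof by contradiction. Suppose toward a contradiction that $\langle L_\alpha, \in \rangle \models \Pi_{n+1}\textsf{-Collection}$. The goal is to apply Theorem~\ref{Th:RelativeConsistencyOfCollection} inside $\langle L_\alpha, \in \rangle$ to build a transitive model of $\mathsf{M}+\Pi_n\textsf{-Collection}$ that is a proper subset of $L_\alpha$, directly contradicting the minimality guaranteed by Theorem~\ref{Th:ExistenceOfMinimumModels}.

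First I would verify the hypotheses of Theorem~\ref{Th:RelativeConsistencyOfCollection}. The theory $\mathsf{M}+\Pi_{n+1}\textsf{-Collection}$ is in force by assumption and by the fact that $L_\alpha$ is already a model of $\mathsf{M}+\Pi_n\textsf{-Collection}$. For $\Pi_{n+2}\textsf{-Foundation}$ I would exploit that $L_\alpha$ is a genuine transitive set in $V$: any non-empty subclass of $L_\alpha$ definable with parameters over $\langle L_\alpha, \in \rangle$ is an honest set in $V$, so the $V$-$\in$-minimal element of that class lies in $L_\alpha$ and, by transitivity, witnesses $\in$-minimality inside $\langle L_\alpha, \in \rangle$. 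Hence full \textsf{Foundation}, and in particular $\Pi_{n+2}\textsf{-Foundation}$, holds in $\langle L_\alpha, \in \rangle$.

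Applying Theorem~\ref{Th:RelativeConsistencyOfCollection} inside $L_\alpha$ then yields an $N \in L_\alpha$ that $\langle L_\alpha, \in \rangle$ regards as a transitive model of $\mathsf{M}+\mathsf{Separation}+\Pi_n\textsf{-Collection}$. Transitivity of $N$ is $\Delta_0$ and therefore absolute, and the satisfaction relation of the set structure $\langle N, \in \rangle$ (whose domain and formula codes all live in $L_\alpha$) is computed the same way in $L_\alpha$ as in $V$, so $\langle N, \in \rangle \models \mathsf{M}+\Pi_n\textsf{-Collection}$ genuinely holds. Since $N \in L_\alpha$, transitivity of $L_\alpha$ gives $N \subseteq L_\alpha$, and $L_\alpha \notin L_\alpha$ forces the inclusion to be strict, so $L_\alpha \not\subseteq N$. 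This contradicts the defining property of $L_\alpha$ as the $\subseteq$-minimum transitive model of $\mathsf{M}+\Pi_n\textsf{-Collection}$. The argument is essentially routine; the only mild point of care is the absoluteness step linking $L_\alpha$'s internal view of $N$ to genuine satisfaction, which is standard for set-sized transitive models.
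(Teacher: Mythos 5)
Your proof is correct and follows essentially the same route as the paper, which derives this corollary directly from Theorems \ref{Th:ExistenceOfMinimumModels} and \ref{Th:RelativeConsistencyOfCollection}; the details you supply (full \textsf{Foundation} in a transitive $\in$-model, absoluteness of satisfaction for the set-sized transitive model $N$, and the strictness of $N \subsetneq L_\alpha$) are exactly the ones the paper leaves implicit.
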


\cite[\S 4]{mck19} shows that, for $n \geq 1$, $\mathsf{M}+\Pi_n\textsf{-Collection}+\Pi_{n+1}\textsf{-Foundation}$ proves the consistency of $\mathsf{M}+\Pi_n\textsf{-Collection}$. Combined with the fact that, for all $n \geq 1$, $\mathsf{M}+\Pi_n\textsf{-Collection}$ proves that $\mathsf{M}+\Pi_n\textsf{-Collection}$ holds in its own constructible universe, this yields: 

\begin{theorem} \label{Th:CollectionDoesNotProveFoundation}
Let $n \in \omega$ with $n \geq 1$. The theory $\mathsf{M}+\Pi_n\textsf{-Collection}+\mathsf{V=L}$ does not prove $\Pi_{n+1}\textsf{-Foundation}$. \Square  
\end{theorem}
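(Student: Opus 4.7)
The plan is to derive a contradiction from Gödel's Second Incompleteness Theorem by assuming the negation of the statement. Suppose, towards a contradiction, that $\mathsf{M}+\Pi_n\textsf{-Collection}+\mathsf{V=L}$ proves $\Pi_{n+1}\textsf{-Foundation}$. I will combine this with the two facts stated in the paragraph preceding the theorem to conclude that $\mathsf{M}+\Pi_n\textsf{-Collection}$ proves its own consistency.

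First I would invoke the fact (cited from \cite[\S 4]{mck19}) that $\mathsf{M}+\Pi_n\textsf{-Collection}+\Pi_{n+1}\textsf{-Foundation}$ proves the formalised statement $\mathrm{Con}(\mathsf{M}+\Pi_n\textsf{-Collection})$. Combined with the contradiction hypothesis, this gives that $\mathsf{M}+\Pi_n\textsf{-Collection}+\mathsf{V=L}$ proves $\mathrm{Con}(\mathsf{M}+\Pi_n\textsf{-Collection})$. Next I would invoke the second cited fact: for each $n \geq 1$, $\mathsf{M}+\Pi_n\textsf{-Collection}$ proves that $\mathsf{M}+\Pi_n\textsf{-Collection}$ holds inside its own constructible universe $L$. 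Since the standard verification that $L \models \mathsf{V=L}$ goes through in $\mathsf{KP}$ (hence in $\mathsf{M}^-$), this actually gives that $\mathsf{M}+\Pi_n\textsf{-Collection}$ proves the relativisation $(\mathsf{M}+\Pi_n\textsf{-Collection}+\mathsf{V=L})^L$.

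Relativising our conclusion from the previous step to $L$, it follows that $\mathsf{M}+\Pi_n\textsf{-Collection}$ proves $(\mathrm{Con}(\mathsf{M}+\Pi_n\textsf{-Collection}))^L$. The statement $\mathrm{Con}(\mathsf{M}+\Pi_n\textsf{-Collection})$ is $\Pi_1$ over the natural numbers, and $\omega$ together with its arithmetic is absolute between $L$ and $V$ in any model of $\mathsf{M}^-$. Hence $(\mathrm{Con}(\mathsf{M}+\Pi_n\textsf{-Collection}))^L$ is equivalent to $\mathrm{Con}(\mathsf{M}+\Pi_n\textsf{-Collection})$, so $\mathsf{M}+\Pi_n\textsf{-Collection}$ proves its own consistency. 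As this theory is recursively axiomatisable and interprets enough arithmetic to develop the syntactic machinery required for Gödel's Second Incompleteness Theorem (via $\mathsf{Sat}$ and standard coding), this is a contradiction, completing the proof.

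The only subtle step is the absoluteness argument of the final paragraph: one must confirm that the syntactic apparatus used to formalise $\mathrm{Con}$ (Gödel-numbering, the provability predicate, finite sequences of formulae) is absolute between $L$ and the ambient model in a sufficiently weak base theory. This is routine since all of these constructions are $\Delta_1^{\mathsf{KP}}$ and $L$ is correct about $\omega$ and about finite sequences of natural numbers, but it is the one place where one needs to be careful rather than invoking a named theorem from the preceding material.
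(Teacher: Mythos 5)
Your proposal is correct and is precisely the argument the paper intends: the theorem is presented as an immediate consequence of the two facts in the preceding paragraph (that $\mathsf{M}+\Pi_n\textsf{-Collection}+\Pi_{n+1}\textsf{-Foundation}$ proves $\mathrm{Con}(\mathsf{M}+\Pi_n\textsf{-Collection})$, and that $\mathsf{M}+\Pi_n\textsf{-Collection}$ proves it holds in its own $L$), combined with relativisation, absoluteness of arithmetic statements, and G\"{o}del's Second Incompleteness Theorem. Your expansion, including the care taken over $L \models \mathsf{V=L}$ and the $\Delta_1^{\mathsf{KP}}$ absoluteness of the syntactic apparatus, fills in exactly the steps the paper leaves implicit.
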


The set theory $\mathsf{MOST}$, studied in \cite{mat01}, is obtained from $\mathsf{M}$ by adding $\Delta_0\textsf{-Collection}$, $\Sigma_1\textsf{-Separation}$ and the Axiom of Choice ($\mathsf{AC}$). In particular, $\mathsf{MOST}$ is a subtheory of $\mathsf{M}+\Pi_1\textsf{-Collection}+\mathsf{V=L}$. A key feature of $\mathsf{MOST}$, and the source of its name, is the fact that it proves {\it Mostowski's Collapsing Lemma}: Every well-founded extensional relation is isomorphic to a unique transitive set by a unique isomorphism known as the {\bf transitive collapse}. This, combined with the absoluteness of the levels of the constructible universe between transitive models and the first-order expressibility of being constructible, allows us to apply {\it G\"{o}del's Condensation Lemma} in $\mathsf{MOST}$:

\begin{lemma}
The theory $\mathsf{MOST}$ proves that for all limit ordinals $\delta$, if $\langle X, \in \rangle \prec \langle L_\delta, \in \rangle$, then there exists $\alpha \leq \delta$ such that the transitive collapse of $\langle X, \in \rangle$ is isomorphic to $\langle L_\alpha, \in \rangle$. \Square  
\end{lemma}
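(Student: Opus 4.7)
The plan is to follow the classical proof of G\"{o}del's Condensation Lemma, noting along the way that each ingredient is available in $\mathsf{MOST}$. Let $\delta$ be a limit ordinal and let $\langle X, \in \rangle \prec \langle L_\delta, \in \rangle$. First, since $L_\delta$ is transitive and hence extensional, and $\in$ restricted to $X$ is well-founded as a subrelation of a set-like well-founded relation (use \textsf{Set-Foundation} together with the fact that $X$ is contained in a set), Mostowski's Collapsing Lemma---which $\mathsf{MOST}$ proves, as noted in the excerpt---provides a unique transitive set $M$ and a unique isomorphism $\pi \colon \langle X, \in \rangle \to \langle M, \in \rangle$. Let $\alpha = M \cap \mathsf{Ord}$; this is an ordinal since $M$ is transitive.

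Next I would show $M = L_\alpha$ by exploiting absoluteness. The function $\beta \mapsto L_\beta$ is $\Delta_1^{\mathsf{KP}}$ and provably total in $\mathsf{KP}$ (as recalled in the background section), so it is absolute between transitive models of a weak enough fragment, and in particular $L_\beta^M = L_\beta$ for every $\beta \in \alpha$. Moreover, $\langle L_\delta, \in \rangle$ satisfies $\mathsf{V=L}$, since for every $x \in L_\delta$ there is $\beta < \delta$ with $x \in L_\beta$ and the relation $x \in L_\beta$ is expressible by a $\Delta_1^{\mathsf{KP}}$-formula. By $\Sigma_1$-elementarity (which is included in full elementarity), $\langle X, \in \rangle \models \mathsf{V=L}$ and therefore $\langle M, \in \rangle \models \mathsf{V=L}$. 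Combining this with absoluteness, every element of $M$ lies in some $L_\beta$ with $\beta < \alpha$, so $M \subseteq L_\alpha$; conversely, for every $\beta < \alpha$ we have $\beta \in M$, and then $L_\beta = L_\beta^M \in M$, which by transitivity gives $L_\beta \subseteq M$, hence $L_\alpha \subseteq M$.

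Finally, I would observe that $\alpha \leq \delta$. Indeed, $\pi$ restricted to $X \cap \mathsf{Ord}$ is an order-isomorphism onto $\alpha$, so $\alpha$ is the order type of a subset of $\delta$ and therefore $\alpha \leq \delta$. The uniqueness of the isomorphism is already contained in Mostowski's Collapsing Lemma.

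The only delicate step is verifying that the absoluteness of the $L$-hierarchy and the expressibility of $\mathsf{V=L}$ by a $\Pi_2$-sentence (both facts recalled in the background) really suffice inside $\mathsf{MOST}$, since $\mathsf{MOST}$ is much weaker than the ambient $\mathsf{ZF}$ where the condensation lemma is usually stated; but the excerpt is explicit that the $L_\beta$-hierarchy is already $\Delta_1^{\mathsf{KP}}$ and that $\mathsf{MOST}$ proves Mostowski collapse, so no new ingredient beyond what has been listed in Section \ref{Sec:Background} is needed.
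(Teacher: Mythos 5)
Your argument is correct and is exactly the route the paper intends: the lemma is stated there without a displayed proof, but the preceding paragraph attributes it to precisely the three ingredients you use --- Mostowski's Collapsing Lemma (provable in $\mathsf{MOST}$), the $\Delta_1^{\mathsf{KP}}$ absoluteness of the map $\beta \mapsto L_\beta$ between transitive models, and the first-order expressibility of constructibility. Your write-up just fills in the standard details (extensionality and well-foundedness of $\in$ on $X$, the two inclusions $M \subseteq L_\alpha$ and $L_\alpha \subseteq M$, and $\alpha \leq \delta$ via the order type of $X \cap \mathsf{Ord}$), so nothing further is needed.
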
  

A key ingredient used in the results proved in sections \ref{Sec:BaseCase} and \ref{Sec:GeneralCase} is a generalisation of a result of Simpson (see \cite[Remark 2]{kau81}) that is proved in \cite[Theorem 3.1]{mck25}:

\begin{theorem} \label{Th:SimpsonResult}
Let $n \in \omega$ with $n \geq 1$. Let $\mathcal{M}= \langle M, \in^{\mathcal{M}} \rangle$ be such that $\mathcal{M} \models \mathsf{KP}+\mathsf{V=L}$. Suppose that $\mathcal{N}= \langle N, \in^{\mathcal{N}} \rangle$ is such that $\mathcal{M} \prec_{e, n} \mathcal{N}$, $\mathcal{N} \models \mathsf{KP}$ and $\mathsf{Ord}^{\mathcal{N}} \backslash \mathsf{Ord}^{\mathcal{M}}$ is nonempty and has no least element. If $\mathcal{N} \models \Pi_{n-1}\textsf{-Collection}$ or $\mathcal{N} \models \Pi_{n+2}\textsf{-Foundation}$, then $\mathcal{M} \models \Pi_n\textsf{-Collection}$. \Square 
\end{theorem}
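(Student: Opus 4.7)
I would prove Theorem~\ref{Th:SimpsonResult} by induction on $n\ge 1$, using that the hypotheses for parameter $n$ entail those for $n-1$ ($\Sigma_n$-elementarity weakens to $\Sigma_{n-1}$-elementarity, and $\Pi_{n-1}\textsf{-Collection}$, resp. $\Pi_{n+2}\textsf{-Foundation}$, in $\mathcal{N}$ weakens to $\Pi_{n-2}\textsf{-Collection}$, resp. $\Pi_{n+1}\textsf{-Foundation}$). The inductive hypothesis therefore yields $\mathcal{M}\models\Pi_{n-1}\textsf{-Collection}$, and by \cite[Proposition 2.4]{flw16} the classes $\Sigma_n$ and $\Pi_n$ are closed under bounded quantification over $\mathcal{M}$. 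For $n\ge 2$, the sentence $\mathsf{V=L}$, being $\Pi_2$, transfers from $\mathcal{M}$ to $\mathcal{N}$ under $\Sigma_n$-elementarity.

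Suppose for a contradiction that $\mathcal{M}\not\models\Pi_n\textsf{-Collection}$, witnessed by $\phi(x,y,\vec p)\in\Pi_n$, $a\in M$, $\vec p\in M$ with $\mathcal{M}\models\forall x\in a\,\exists y\,\phi(x,y,\vec p)$ but no $b\in M$ bounding the witnesses. Working in $\mathcal{N}$, consider the upward-closed class
\[
T=\{\alpha\in\mathsf{Ord}^{\mathcal{N}}:\mathcal{N}\models\forall x\in a\,\exists y\in L_\alpha\,\phi(x,y,\vec p)\}.
\]
I would first show that every new ordinal lies in $T$: each $\mathcal{M}$-witness $y$ for $\phi(x,\cdot)$ sits in $L_\beta^{\mathcal{M}}=L_\beta^{\mathcal{N}}$ for some $\beta\in\mathsf{Ord}^{\mathcal{M}}$ by $\mathsf{V=L}^{\mathcal{M}}$, hence in $L_\gamma^{\mathcal{N}}$ for any new $\gamma$, and $\Pi_n$-elementarity preserves $\phi(x,y)$. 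Next, no old ordinal lies in $T$: using the inductive $\Pi_{n-1}\textsf{-Collection}$ in $\mathcal{M}$ to rewrite ``$\exists x\in a\,\forall y\in L_\alpha\,\neg\phi$'' as a $\Sigma_n$-formula $\psi(\alpha,a,\vec p)$ whose implication back to the original formula is a logical instance (holding without any collection in the target), the failure of the bound in $\mathcal{M}$ gives $\mathcal{M}\models\psi(\alpha,a,\vec p)$ for each $\alpha\in\mathsf{Ord}^{\mathcal{M}}$; $\Sigma_n$-elementarity transfers $\psi$ to $\mathcal{N}$, and the logical implication ejects $\alpha$ from $T$.

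I would then derive the contradiction by exhibiting $\min T$ in $\mathcal{N}$, which is necessarily the least new ordinal. In Case~2 ($\mathcal{N}\models\Pi_{n+2}\textsf{-Foundation}$), the defining formula of $T$ is syntactically $\Pi_{n+2}$, so $\Pi_{n+2}\textsf{-Foundation}$ directly supplies $\min T$. In Case~1 ($\mathcal{N}\models\Pi_{n-1}\textsf{-Collection}$): for $n=1$, $T$ is $\Pi_1$-definable under $\mathsf{KP}$'s bounded-quantifier handling and $\min T$ follows from $\mathsf{KP}$'s $\Pi_1\textsf{-Foundation}$, recovering Simpson's classical argument. For $n\ge 2$, the transferred $\mathsf{V=L}$ equips $\mathcal{N}$ with the $\Sigma_n$-definable well-order $<_L$; define $y(x):=$ the $<_L$-least $z$ with $\phi(x,z)$, whose graph is $\Delta_n$ in $\mathcal{N}$ under $\Pi_{n-1}\textsf{-Collection}$. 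By $\Sigma_n$-elementarity $y^{\mathcal{N}}(x)=y^{\mathcal{M}}(x)\in M$; $\Pi_{n-1}\textsf{-Collection}$ (${}=\Sigma_n\textsf{-Collection}$) in $\mathcal{N}$ bounds $y[a]$ by a set $b$, and $\mathsf{KP}$-replacement turns $\mathrm{rank}[y[a]]$ into a set. Its supremum $\delta$ dominates $\{\mathrm{rank}(y^{\mathcal{M}}(x)):x\in a\}\subseteq\mathsf{Ord}^{\mathcal{M}}$, which is cofinal in $\mathsf{Ord}^{\mathcal{M}}$ by failure of collection in $\mathcal{M}$, forcing $\delta$ to be the least new ordinal.

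The principal obstacle is Case~1 for $n\ge 2$: the complexity audit — identifying $y^{\mathcal{N}}(x)$ with $y^{\mathcal{M}}(x)$ via the $\Delta_n$ form of $<_L$-leastness and then bounding $y[a]$ — requires the $\Pi_{n-1}\textsf{-Collection}$ hypothesis in both structures simultaneously, and the concluding identification of $\delta$ as the least new ordinal depends on the cofinality in $\mathsf{Ord}^{\mathcal{M}}$ of the rank set $\{\mathrm{rank}(y^{\mathcal{M}}(x)):x\in a\}$, which is the quantitative content of the assumed failure of collection.
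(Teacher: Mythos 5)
First, note that the paper does not prove this statement itself: it is imported verbatim from \cite[Theorem 3.1]{mck25}, so there is no in-paper proof to compare against, and (as a side remark) the only alternative of the disjunctive hypothesis that this paper ever actually invokes is the $\Pi_{n+2}\textsf{-Foundation}$ one, via structures of the form $\langle (L_\kappa^{\mathcal{N}})^*,\in^{\mathcal{N}}\rangle\models\mathsf{KPI}+\textsf{Separation}$. Your skeleton is the right Simpson-style argument: the class $T=\{\alpha:\mathcal{N}\models(\forall x\in a)(\exists y\in L_\alpha)\phi\}$ contains every new ordinal (via $\mathsf{V=L}$ in $\mathcal{M}$, absoluteness of $L_\beta$, and $\Pi_n$-preservation) and no old ordinal (the transfer of $(\exists x\in a)(\forall y\in L_\alpha)\neg\phi$ in fact needs no collection at all, since both bounded quantifiers range over sets coded in $M$ and $\neg\phi$ is $\Sigma_n$-preserved pointwise -- your detour through a collected $\Sigma_n$ form $\psi$ is correct but unnecessary). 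Your Case 2 is correct ($T$ is syntactically $\Pi_{n+2}$, so $\Pi_{n+2}\textsf{-Foundation}$ yields $\min T$, the least new ordinal), as is Case 1 for $n=1$ ($T$ is $\Pi_1$ over $\mathsf{KP}$ and $\mathsf{KP}$ has $\Pi_1\textsf{-Foundation}$).

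The gap is exactly where you flag "the principal obstacle": Case 1 with $n\ge 2$ is not actually carried out, and the two claims on which it rests are both unjustified. (i) The $<_L$-least $z$ with $\phi(x,z)$ need not exist in $\mathcal{N}$: the witness class $\{z:\phi(x,z,\vec p)\}$ is $\Pi_n$-definable, and locating a $<_L$-minimal element (equivalently, the least $L$-level containing a witness) is an instance of $\Pi_n\textsf{-Foundation}$, which $\mathsf{KP}+\Pi_{n-1}\textsf{-Collection}$ does not supply -- from $\Pi_{n-1}\textsf{-Collection}$ one only gets $\Delta_n\textsf{-Separation}$ and hence $\Delta_n\textsf{-Foundation}$ (cf.\ \cite[Theorem 4.13]{flw16}), and the arithmetic analogue $B\Sigma_n\nvdash I\Sigma_n$ shows this cannot be improved in general. (ii) The graph of your witness function is not $\Delta_n$: "$z$ is the $<_L$-least witness" is $\phi(x,z)\land(\forall w<_L z)\neg\phi(x,w)$, a conjunction of a $\Pi_n$ and (after collection) a $\Sigma_n$ formula, and the $\Pi_n$ conjunct $\phi(x,z)$ does not become $\Sigma_n$; consequently $\Pi_{n-1}\textsf{-Collection}$ ($=\Sigma_n\textsf{-Collection}$) in $\mathcal{N}$ cannot be applied to bound $y[a]$. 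The same obstruction blocks the obvious repair of working directly with $T$, which under $\Pi_{n-1}\textsf{-Collection}$ is $\Pi_n$-definable but not evidently $\Delta_n$, so $\Delta_n\textsf{-Foundation}$ does not extract $\min T$ either. Overcoming this mismatch -- the relevant classes are $\Pi_n$ while the available collection and foundation in $\mathcal{N}$ stop at $\Sigma_n$/$\Delta_n$ -- is the actual content of the collection case of \cite[Theorem 3.1]{mck25}, and your proposal does not contain an idea that resolves it.
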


Theorem \ref{Th:SimpsonResult} is used in \cite[\S 3]{mck25} to show that for $n \geq 1$, the minimum model of $\mathsf{M}+\mathsf{Separation}+\Pi_n\textsf{-Collection}$ has no proper $\Sigma_{n+1}$-elementary end extension that satisfies either $\mathsf{KP}+\Pi_n\textsf{-Collection}$ or $\mathsf{KP}+\Pi_{n+3}\textsf{-Foundation}$.  

\section[The theory transferred to partially-elementary end extensions]{The theory transferred to partially-elementary end extensions} \label{Sec:ComplexityCalculations}

In this section we show that for all $n \geq 2$, if $\mathcal{N}$ is a $\Sigma_{n+1}$-elementary end extension of $\mathcal{M}$ that satisfies $\mathsf{M}^-+\Pi_n\textsf{-Collection}+\mathsf{V=L}$, then $\mathcal{N}$ satisfies $\mathsf{M}^-+\Pi_{n-2}\textsf{-Collection}+\Sigma_{n-1}\textsf{-Separation}+\mathsf{V=L}$. Moreover, if in addition $\mathcal{M}$ satisfies \textsf{Powerset} and $\mathcal{N}$ satisfies $\Pi_{n-1}\textsf{-Collection}$, then $\mathcal{N}$ satisfies $\Sigma_n\textsf{-Separation}$. We also show that if $\mathcal{N}$ is a $\Sigma_2$-elementary end extension of $\mathcal{M}$ that satisfies $\mathsf{M}^-+\Pi_1\textsf{-Collection}$ and $\mathcal{N}$ satisfies $\Delta_0\textsf{-Collection}$, then $\mathcal{N}$ satisfies $\Sigma_1 \cup \Pi_1\textsf{-Foundation}$.

\begin{lemma} \label{Th:ComplexityOfCollection}
Let $n \in \omega$. The theory $\mathsf{M}^-+\Pi_n\textsf{-Collection}$ is axiomatised by a collection of $\Pi_{n+3}$-sentences.
\end{lemma}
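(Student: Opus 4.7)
The plan is an axiom-by-axiom verification by quantifier counting. Each axiom of $\mathsf{M}^-$ listed explicitly at the start of the section is a $\Pi_2$-sentence, and since $\Pi_2 \subseteq \Pi_{n+3}$ for $n \geq 0$, those axioms are already $\Pi_{n+3}$-sentences. It remains to show that each instance of the $\Pi_n\textsf{-Collection}$ scheme is provably equivalent to a $\Pi_{n+3}$-sentence.

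Fix a $\Pi_n$-formula $\phi(x, y, \vec z)$ and consider the instance
\[
\forall \vec z\,\forall w\;\bigl[(\forall x \in w)\,\exists y\,\phi \;\to\; \exists v\,(\forall x \in w)(\exists y \in v)\,\phi\bigr].
\]
First I would use the logical equivalence $A \to \exists v\, B_v \equiv \exists v\,(A \to B_v)$ (valid since $v$ is not free in the antecedent $A$) to rewrite the axiom as $\forall \vec z\,\forall w\,\exists v\,[A \to B_v]$, where the standard absorption of the bounded quantifiers $\forall x \in w$ and $\exists y \in v$ into $\phi$'s prenex places both $A = (\forall x \in w)\,\exists y\,\phi$ and $B_v = (\forall x \in w)(\exists y \in v)\,\phi$ in $\Pi_{n+2}$. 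Then I would prenex the inner implication $\neg A \vee B_v$. Since $\neg A \in \Sigma_{n+2}$ and $B_v \in \Pi_{n+2}$ begin with opposite-type quantifier blocks of the same length, a careful level-by-level interleaved prenexing that merges same-type adjacent blocks via $\forall x\, P \vee \forall y\, Q \equiv \forall x,y\,(P \vee Q)$ and its existential dual produces a prenex with at most $n+3$ alternating blocks, placing the implication in $\Sigma_{n+3} \cap \Pi_{n+3}$.

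Finally I would combine with the outer prefix $\forall \vec z\,\forall w\,\exists v$, choosing the representation of the implication so that the outer $\exists v$ merges with a leading $\exists$-block of the inner prenex and the outer $\forall \vec z\,\forall w$ merges with the resulting leading $\forall$-block, yielding a $\Pi_{n+3}$-sentence equivalent over $\mathsf{M}^-$ to the original collection instance. The main obstacle is precisely this last step: the detailed syntactic bookkeeping of variable renaming and the choice of pull-out order at each level of the interleaved prenexing so that block types align for merging, together with the consistent choice of $\Sigma_{n+3}$- versus $\Pi_{n+3}$-representation so that the outer quantifiers fold into the inner prefix without introducing extra alternations beyond the target $n+3$.
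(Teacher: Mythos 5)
Your quantifier count does not close. With $B_v=(\forall x\in w)(\exists y\in v)\,\phi$ estimated only as $\Pi_{n+2}$, the body $\neg A\vee B_v$ lies in $\Sigma_{n+3}$ (or, with the other pull-out order, $\Pi_{n+3}$), and the outer prefix $\forall\vec z\,\forall w\,\exists v$ then necessarily costs one more alternation: if you take the $\Sigma_{n+3}$ form so that $\exists v$ merges with the leading existential block, the result still \emph{begins} with an existential block, so prefixing $\forall\vec z\,\forall w$ opens a new universal block and you land in $\Pi_{n+4}$; if you take the $\Pi_{n+3}$ form, the $\exists v$ itself costs an alternation and you land in $\Pi_{n+5}$. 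The step ``the outer $\exists v$ merges with a leading $\exists$-block and the outer $\forall\vec z\,\forall w$ merges with the resulting leading $\forall$-block'' cannot be realised, because after the first merge there is no leading $\forall$-block. Indeed, the raw prefix of a $\Pi_n\textsf{-Collection}$ instance is $\forall(\vec z,w)\,\exists v\,\forall x\,\exists y$ followed by the $n$ alternating blocks of $\phi$ beginning with $\forall$, which is $n+4$ blocks; purely syntactic prenexing cannot do better than $\Pi_{n+4}$.

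The missing idea is that the $\Pi_{n+3}$ bound is theory-relative, not logical, and the relevant theory must be built up by induction on $n$. The paper's proof assumes inductively that $\mathsf{M}^-+\Pi_{n-1}\textsf{-Collection}$ is already axiomatised by $\Pi_{n+2}$-sentences and establishes the equivalence \emph{over that theory}, where $\Pi_n$ is closed under bounded quantification (the fact from \cite{flw16} recalled in Section \ref{Sec:Background}). This collapses $(\forall x\in w)(\exists y\in v)\,\phi$ back to a $\Pi_n$-formula, so $\exists v\,B_v$ is $\Sigma_{n+1}$, the body $\neg A\vee\exists v\,B_v$ is $\Sigma_{n+2}$ (the dominant disjunct being $\neg A\in\Sigma_{n+2}$), and the outer universal quantifiers then give exactly $\Pi_{n+3}$. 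For $n=0$ the collapse is free because $\Delta_0$ is closed under bounded quantification by definition, which is the base case of the induction. Your argument never invokes collection at the lower level and never sets up this induction, so it cannot reach the stated bound; it needs to be restructured along these lines.
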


\begin{proof}
We prove this by induction on $n$. We have already observed that $\mathsf{M}^-$ is axiomatised by a collection of $\Pi_2$-sentences.    Every instance of $\Delta_0\textsf{-Collection}$ is of the form
\[
\forall \vec{z} \forall w((\forall x \in w) \exists y \phi(x, y, \vec{z}) \Rightarrow \exists c(\forall x \in w)(\exists y \in c) \phi(x, y, \vec{z})),
\] 
where $\phi(x, y, \vec{z})$ is $\Delta_0$. Each of these instances is equivalent to a $\Pi_3$-sentence. This shows that the lemma holds when $n=0$. Now, let $k \geq 0$ and assume that $\Gamma$ is a collection of $\Pi_{k+3}$-sentences that axiomatise $\mathsf{M}^-+\Pi_k\textsf{-Collection}$. Let $\phi(x, y, \vec{z})$ be a $\Pi_{k+1}$-formula. The instance of $\Pi_{k+1}\textsf{-Collection}$ for the formula $\phi(x, y, \vec{z})$ is
\begin{equation} \label{eq:CollectionInstance}
\forall \vec{z} \forall w((\forall x \in w) \exists y \phi(x, y, \vec{z}) \Rightarrow \exists c(\forall x \in w)(\exists y \in c) \phi(x, y, \vec{z})).
\end{equation}
In the theory $\mathsf{M}^-+\Pi_k\textsf{-Collection}$, (\ref{eq:CollectionInstance}) is equivalent to a $\Pi_{k+4}$-sentence. Therefore, there is a collection, $\Lambda$, of $\Pi_{k+4}$ sentence such that $\Gamma \cup \Lambda$ axiomatises $\mathsf{M}^-+\Pi_{k+1}\textsf{-Collection}$. \Square
\end{proof}

\begin{lemma} \label{Th:ComplexityOfSeparation}
Let $n \in \omega$. For each instance, $\sigma$, of $\Sigma_{n+1}\textsf{-Separation}$, there exists a $\Pi_{n+3}$-sentence $\gamma$ such that $\mathsf{M}^-+\Pi_n\textsf{-Collection} \vdash (\sigma \iff \gamma)$.
\end{lemma}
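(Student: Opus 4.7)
The plan is to mimic the approach of Lemma \ref{Th:ComplexityOfCollection}, rewriting a generic instance $\sigma$ of $\Sigma_{n+1}\textsf{-Separation}$ into $\Pi_{n+3}$-form by using $\Pi_n\textsf{-Collection}$ to absorb one quantifier alternation. Let $T = \mathsf{M}^- + \Pi_n\textsf{-Collection}$, and write $\phi(x, \vec{z}) = \exists u \, \phi_1(u, x, \vec{z})$ with $\phi_1 \in \Pi_n$. Using extensionality, $\sigma$ is equivalent (in pure logic) to
\[
\forall \vec{z} \forall w \exists y \, \bigl[ y \subseteq w \land (\forall x \in y) \exists u \, \phi_1(u,x,\vec{z}) \land (\forall x \in w)(\forall u \, \neg \phi_1(u,x,\vec{z}) \lor x \in y) \bigr].
\]

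The key step I would carry out is to apply $\Sigma_{n+1}\textsf{-Collection}$ (equivalent to $\Pi_n\textsf{-Collection}$ over $\mathsf{M}^-$) to the middle conjunct: this replaces $(\forall x \in y) \exists u \, \phi_1$ by the equivalent $\exists c (\forall x \in y)(\exists u \in c) \phi_1$, moving the witness $c$ outside the matrix so that it can eventually be merged with the existing $\exists y$. Invoking the closure of $\Sigma_{n+1}^T$ under bounded quantification (from Section \ref{Sec:Background}, a consequence of $\Pi_n\textsf{-Collection}$), the formula $(\forall x \in y)(\exists u \in c) \phi_1$ is $T$-equivalent to some $\Sigma_{n+1}$-formula, say $\exists e \, \psi(e, y, c, \vec{z})$ with $\psi \in \Pi_n$.

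After combining the three existentials $\exists y \exists c \exists e$ into a single $\exists p$ via pairing, the matrix of $\sigma$ becomes $T$-equivalent to the conjunction of $y \subseteq w$ (which is $\Delta_0$), $\psi$ (which is $\Pi_n$), and the third conjunct $(\forall x \in w)(\forall u \, \neg \phi_1 \lor x \in y)$ (which, since $\neg \phi_1 \in \Sigma_n$, is straightforwardly $\Pi_{n+1}$). This conjunction is $\Pi_{n+1}$, so $\sigma$ is $T$-equivalent to a sentence of the form $\forall \vec{z} \forall w \exists p \, \theta(p, \vec{z}, w)$ with $\theta \in \Pi_{n+1}$, i.e.\ to a $\Pi_{n+3}$-sentence, as required.

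The main obstacle is keeping the complexity count tight. A naive reading of the matrix as a conjunction of $\Sigma_{n+1}$ and $\Pi_{n+1}$ gives only $\Pi_{n+2}$ inside $\exists y$, which would yield a $\Pi_{n+4}$-bound. The essential saving is the Collection-driven step: by pulling the witness $c$ (and, via closure, the witness $e$) outside the matrix, their existential alternations are absorbed into the same block as the outer $\exists y$ rather than each producing a fresh alternation. This is exactly the trade-off that the axiomatisation in Lemma \ref{Th:ComplexityOfCollection} also exploits, and it is where the hypothesis $\Pi_n\textsf{-Collection}$ is used.
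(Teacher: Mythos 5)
Your proof is correct and follows the same route the paper intends: the paper's own proof simply asserts that the displayed instance of $\Sigma_{n+1}\textsf{-Separation}$ is $T$-equivalent to a $\Pi_{n+3}$-sentence, and your argument supplies exactly the expected computation, using $\Pi_n\textsf{-Collection}$ to turn $(\forall x \in y)\exists u\,\phi_1$ into a $\Sigma_{n+1}$-formula and closure under bounded quantification to keep the count at $\forall\forall\exists\,\Pi_{n+1}$. The quantifier bookkeeping checks out, so nothing further is needed.
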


\begin{proof}
Let $\sigma$ be an instance of $\Sigma_{n+1}\textsf{-Separation}$. Therefore, $\sigma$ is
\[
\forall \vec{z} \forall w \exists y ((\forall x \in w)(\phi(x, \vec{z}) \Rightarrow x \in y) \land (\forall x \in y)(x \in w \land \phi(x, \vec{z}))),
\]
where $\phi(x, \vec{z})$ is a $\Sigma_{n+1}$-formulae. In the theory $\mathsf{M}^-+\Pi_n\textsf{-Collection}$, this sentence is equivalent to a $\Pi_{n+3}$-sentence.
\Square
\end{proof}

\begin{lemma} \label{Th:ComplexityOfFoundation}
Let $n \in \omega$. There exists a collection, $\Gamma$, of $\Pi_{n+2}$-sentences such that
\[
\mathsf{M}^-+\Pi_n\textsf{-Collection}+\Gamma \vdash \Pi_{n+1} \cup \Sigma_{n+1}\textsf{-Foundation} 
\]
\[
\textrm{and } \mathsf{M}^-+\Pi_n\textsf{-Collection}+\Pi_{n+1} \cup \Sigma_{n+1}\textsf{-Foundation} \vdash \Gamma. 
\]
\end{lemma}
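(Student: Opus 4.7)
My plan is to take $\Gamma$ to be the collection of $\Pi_{n+2}$-sentences that are, over $\mathsf{M}^- + \Pi_n\textsf{-Collection}$, equivalent to individual instances of $\Pi_{n+1} \cup \Sigma_{n+1}\textsf{-Foundation}$; the bi-provability of the two theories then follows instance-by-instance. Since, over $\mathsf{M}^-+\Pi_n\textsf{-Collection}$, foundation for a $\Pi_{n+1}$-formula and foundation for its negation (a $\Sigma_{n+1}$-formula) are mutually derivable, it is enough to handle the $\Pi_{n+1}$-case; the $\Sigma_{n+1}$-case is dual.

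\textbf{Key steps.} Fix a $\Pi_{n+1}$-formula $\phi(x, \vec z)$. The corresponding foundation instance is
\[
\forall \vec z\,\bigl(\exists x\,\phi(x,\vec z) \Rightarrow \exists x\,(\phi(x,\vec z) \land (\forall y \in x)\lnot\phi(y,\vec z))\bigr).
\]
First I would apply the closure statement cited in Section \ref{Sec:Background} (Proposition 2.4 of \cite{flw16}): because $\Sigma_{n+1}^{\mathsf{M}^-+\Pi_n\textsf{-Collection}}$ is closed under bounded universal quantification and $\lnot\phi$ is $\Sigma_{n+1}$, there is a $\Sigma_{n+1}$-formula $\chi(x,\vec z)$ provably equivalent to $(\forall y \in x)\lnot\phi(y,\vec z)$. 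Thus the foundation instance is, over the base theory, equivalent to
\[
\forall \vec z\,\bigl(\exists x\,\phi \Rightarrow \exists x\,(\phi \land \chi)\bigr),
\]
or equivalently (moving negations) to $\forall \vec z\,\bigl((\forall x\lnot\phi) \lor \exists x(\phi \land \chi)\bigr)$.

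Next I would prenex this disjunctive form. The conjunction $\phi \land \chi$ is $\Pi_{n+1} \land \Sigma_{n+1}$; by interleaving the two quantifier chains (which act on disjoint bound variables of the two conjuncts), one obtains both a $\Pi_{n+2}$- and a $\Sigma_{n+2}$-prenex form of $\phi \land \chi$, i.e.\ a $\Delta_{n+2}$-formula in the sense of logical equivalence. The crucial observation is that the two disjuncts $\forall x\lnot\phi$ (which is $\Pi_{n+2}$) and $\exists x(\phi \land \chi)$ (which is $\Sigma_{n+2}$) use disjoint bound variables, so their quantifier chains can be interleaved freely; choosing the interleaving that aligns quantifiers of matching type yields a $\Pi_{n+2}$-prenex for the whole disjunction. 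Finally the outer $\forall \vec z$ absorbs into the leading $\forall$-block of that prenex, producing a $\Pi_{n+2}$-sentence $\gamma$ equivalent to the foundation instance over $\mathsf{M}^-+\Pi_n\textsf{-Collection}$.

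\textbf{Conclusion and obstacle.} Taking $\Gamma$ to be the collection of all such $\gamma$, both directions of the lemma follow: $\Gamma$ derives $\Pi_{n+1} \cup \Sigma_{n+1}\textsf{-Foundation}$ because each $\gamma$ is equivalent to its generating foundation instance, and $\Pi_{n+1} \cup \Sigma_{n+1}\textsf{-Foundation}$ derives $\Gamma$ by the same equivalence. The main obstacle is the final prenexing step: one must verify that the favourable interleaving of quantifiers from $\forall x\lnot\phi$ and $\exists x(\phi \land \chi)$ (both of which have the same $n+1$-alternation quantifier chain from $\phi$) can be arranged to yield exactly $\Pi_{n+2}$ rather than a naive $\Pi_{n+3}$; this exploits the fact that the $\Sigma_{n+1}$-formula $\chi$ arising from bounded-quantifier closure uses variables disjoint from those of $\phi$, and that the $\Pi_n$-Collection axiom allows the bounded block $(\forall y\in x)\lnot\phi$ to be rewritten with a single leading existential that can be amalgamated with the existential block of $\exists x(\phi \land \chi)$.
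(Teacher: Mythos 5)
Your proposal diverges from the paper's proof at the very first step, and it breaks down exactly at the point you flag as ``the main obstacle''. Two concrete problems. First, the reduction of the $\Sigma_{n+1}$-case to the $\Pi_{n+1}$-case rests on a false claim: foundation for $\phi$ and foundation for $\neg\phi$ are assertions about two different classes ($\{x:\phi\}$ and its complement) and are not mutually derivable, instance-wise or otherwise; the paper simply treats $\phi\in\Sigma_{n+1}\cup\Pi_{n+1}$ uniformly. Second, and more seriously, the final prenexing step fails. Your disjunction $\forall x\neg\phi\lor\exists x(\phi\land\chi)$ has one disjunct in $\Pi_{n+2}$ and one in $\Sigma_{n+2}$, with \emph{opposite} leading quantifiers; interleaving two quantifier prefixes of length $n+2$ that begin with $\forall$ and $\exists$ respectively always produces $n+3$ alternations, no matter how the bound variables are named. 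Disjointness of variables is what makes interleaving legitimate, but it cannot make the leading quantifiers align. So the sentence you produce is only $\Pi_{n+3}$ --- the same bound Lemma \ref{Th:ComplexityOfSeparation} gives for separation --- which is not good enough here, and no choice of interleaving repairs this.

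The idea you are missing is the paper's opening move: using $\mathsf{TCo}$ and \textsf{Set-Foundation}, each instance of $\Pi_{n+1}\cup\Sigma_{n+1}\textsf{-Foundation}$ is first replaced, equivalently over $\mathsf{M}^-$, by its set-bounded form
\[
\forall \vec{z}\,\forall u\bigl((\exists x\in u)\,\phi(x,\vec{z})\Rightarrow(\exists y\in u)(\phi(y,\vec{z})\land(\forall w\in y)\neg\phi(w,\vec{z}))\bigr),
\]
with $u$ ranging over (transitive) sets. Bounding the leading existential of the antecedent by $u$ is what brings it down from $\Sigma_{n+2}$ (respectively, brings your first disjunct down from $\Pi_{n+2}$) to the $(n+1)$-st level, via the closure of $\Sigma_{n+1}$ and $\Pi_{n+1}$ under bounded quantification over $\mathsf{M}^-+\Pi_n\textsf{-Collection}$; only then does the complexity count for the whole sentence have any chance of landing at $\Pi_{n+2}$. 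You have correctly sensed that the consequent $(\exists y\in u)(\phi\land(\forall w\in y)\neg\phi)$ --- a bounded existential of a genuine $\Sigma_{n+1}\land\Pi_{n+1}$ conjunction --- is where the delicacy lies, but without the bounding of the antecedent your formulation cannot reach $\Pi_{n+2}$ at all, so the proposal as written does not establish the lemma.
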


\begin{proof}
$\Sigma_{n+1} \cup \Pi_{n+1}\textsf{-Foundation}$ is the scheme: for all $\Sigma_{n+1} \cup \Pi_{n+1}$-formulae, $\phi(x, \vec{z})$,
\[
\forall \vec{z} (\exists x \phi(x, \vec{z}) \Rightarrow \exists y(\phi(y, \vec{z}) \land (\forall w \in y)(\neg \phi(w, \vec{z})))).
\]
In the theory $\mathsf{M}^-$, $\Sigma_{n+1} \cup \Pi_{n+1}\textsf{-Foundation}$ is equivalent to the scheme: for all $\Sigma_{n+1} \cup \Pi_{n+1}$-formulae, $\phi(x, \vec{z})$,
\begin{equation}\label{eq:SimplifiedFoundationScheme}
\forall \vec{z} \forall u ((\exists x \in u) \phi(x, \vec{z}) \Rightarrow (\exists y \in u)(\phi(y, \vec{z}) \land (\forall w \in y)(\neg \phi(w, \vec{z})))).
\end{equation}
In the theory $\mathsf{M}^-+\Pi_n\textsf{-Collection}$, each instance (\ref{eq:SimplifiedFoundationScheme}) is equivalent to a $\Pi_{n+2}$-sentence.
\Square
\end{proof}

\begin{theorem} \label{Th:TheoryInEndExtensionWithoutCollection}
Let $n \in \omega$ with $n \geq 2$. Let $\mathcal{M}= \langle M, \in^{\mathcal{M}} \rangle$ be such that $\mathcal{M} \models \mathsf{M}+\Pi_n\textsf{-Collection}+\mathsf{V=L}$. If $\mathcal{N}= \langle N, \in^{\mathcal{N}} \rangle$ is such that $\mathcal{M} \prec_{e, n+1} \mathcal{N}$, then 
\[
\mathcal{N} \models \mathsf{M}+\Pi_{n-2}\textsf{-Collection}+\Sigma_{n-1}\textsf{-Separation}+\mathsf{V=L}.
\]
\end{theorem}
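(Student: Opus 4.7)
The plan is to organize the argument around the complexity lemmas (Lemmas \ref{Th:ComplexityOfCollection} and \ref{Th:ComplexityOfSeparation}) already proved in this section and exploit the fact that, under the hypothesis $\mathcal{M}\prec_{e,n+1}\mathcal{N}$, any $\Pi_{n+1}$-sentence is preserved in both directions between $\mathcal{M}$ and $\mathcal{N}$. The content of the theorem is that, for $n\geq 2$, every axiom we want to hold in $\mathcal{N}$ can be expressed by (or shown equivalent in $\mathcal{N}$ to) a $\Pi_{n+1}$-sentence, and each of those sentences already holds in $\mathcal{M}$.

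First I would handle the axioms that are outright $\Pi_{n+1}$. The scheme $\mathsf{M}$ is axiomatised by $\Pi_3$-sentences (this was noted for $\mathsf{M}^-$ in section \ref{Sec:Background}, and \textsf{Powerset} is $\Pi_3$), and $\mathsf{V=L}$ was observed to be expressible as a $\Pi_2$-sentence in $\mathsf{KP}$. Since $n\geq 2$ gives $n+1\geq 3$, these are $\Pi_{n+1}$-sentences holding in $\mathcal{M}$, and by $\Sigma_{n+1}$-elementarity they hold in $\mathcal{N}$. Next, Lemma \ref{Th:ComplexityOfCollection} applied with $k=n-2$ says that $\mathsf{M}^-+\Pi_{n-2}\textsf{-Collection}$ is axiomatised by $\Pi_{(n-2)+3}=\Pi_{n+1}$-sentences; because $\Pi_{n-2}\textsf{-Collection}$ is a sub-scheme of $\Pi_n\textsf{-Collection}$, these sentences hold in $\mathcal{M}$ and therefore in $\mathcal{N}$. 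At this stage $\mathcal{N}\models\mathsf{M}+\Pi_{n-2}\textsf{-Collection}+\mathsf{V=L}$.

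The remaining task is $\Sigma_{n-1}\textsf{-Separation}$, and here the previous step has to be in place first. I would apply Lemma \ref{Th:ComplexityOfSeparation} with $n$ replaced by $n-2$: for each instance $\sigma$ of $\Sigma_{n-1}\textsf{-Separation}$ there is a $\Pi_{n+1}$-sentence $\gamma$ such that $\mathsf{M}^-+\Pi_{n-2}\textsf{-Collection}\vdash(\sigma\iff\gamma)$. To see that $\sigma$ (equivalently $\gamma$) holds in $\mathcal{M}$, note that $\mathcal{M}$ satisfies $\mathsf{M}^-+\Pi_n\textsf{-Collection}$, and this theory was recorded in section \ref{Sec:Background} (via \cite[Theorem 4.13]{flw16}) to prove $\Delta_{n+1}\textsf{-Separation}$; every $\Sigma_{n-1}$-formula is trivially $\Delta_n$ and hence $\Delta_{n+1}$, so $\sigma$ holds in $\mathcal{M}$. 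The $\Pi_{n+1}$-sentence $\gamma$ therefore transfers to $\mathcal{N}$ by $\Sigma_{n+1}$-elementarity, and because $\mathcal{N}\models\mathsf{M}^-+\Pi_{n-2}\textsf{-Collection}$ (established in the previous paragraph), the equivalence given by Lemma \ref{Th:ComplexityOfSeparation} now yields $\mathcal{N}\models\sigma$.

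The only delicate point I see is the ordering: Lemma \ref{Th:ComplexityOfSeparation} only packages $\Sigma_{n-1}\textsf{-Separation}$ as a $\Pi_{n+1}$-sentence modulo $\mathsf{M}^-+\Pi_{n-2}\textsf{-Collection}$, so one cannot transfer separation directly. The proof has to first install enough collection in $\mathcal{N}$ using Lemma \ref{Th:ComplexityOfCollection}, and then re-apply the equivalence of Lemma \ref{Th:ComplexityOfSeparation} inside $\mathcal{N}$. Other than keeping track of this bootstrapping order, no serious obstacle is expected — the entire argument is a careful accounting of L\'{e}vy complexities against the $\Sigma_{n+1}$-elementarity hypothesis.
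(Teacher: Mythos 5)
Your proposal is correct and follows essentially the same route as the paper's proof: transfer the $\Pi_3$-axiomatisable part $\mathsf{M}+\mathsf{V=L}$ directly, use Lemma \ref{Th:ComplexityOfCollection} to transfer $\Pi_{n-2}\textsf{-Collection}$, and then use Lemma \ref{Th:ComplexityOfSeparation} (relative to the collection already installed in $\mathcal{N}$) to obtain $\Sigma_{n-1}\textsf{-Separation}$. The bootstrapping order you flag is exactly the order in which the paper applies the two lemmas, and your justification that $\mathcal{M}$ satisfies $\Sigma_{n-1}\textsf{-Separation}$ via $\Delta_{n+1}\textsf{-Separation}$ is a correct filling-in of a detail the paper leaves implicit.
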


\begin{proof}
Let $\mathcal{N}= \langle N, \in^{\mathcal{N}} \rangle$ be such that $\mathcal{M} \prec_{e, n+1} \mathcal{N}$. Since $n \geq 2$ and $\mathcal{M} \prec_{e, n+1} \mathcal{N}$, $\mathcal{N} \models \mathsf{M}+\mathsf{V=L}$. So, by Lemma \ref{Th:ComplexityOfCollection}, $\mathcal{N} \models \mathsf{M}+\Pi_{n-2}\textsf{-Collection}+\mathsf{V=L}$. Therefore, by Lemma \ref{Th:ComplexityOfSeparation}, $\mathcal{N} \models \mathsf{M}+\Pi_{n-2}\textsf{-Collection}+\Sigma_{n-1}\textsf{-Separation}+\mathsf{V=L}$.
\Square
\end{proof}

\begin{theorem} \label{Th:KPIInSigma2EndExtension}
Let $\mathcal{M}= \langle M, \in^{\mathcal{M}} \rangle$ be such that $\mathcal{M} \models \mathsf{M}+\Pi_1\textsf{-Collection}+\mathsf{V=L}$. If $\mathcal{N}= \langle N, \in^{\mathcal{N}} \rangle$ is such that $\mathcal{M} \prec_{e, 2} \mathcal{N}$ and $\mathcal{N} \models \Delta_0\textsf{-Collection}$, then 
\[
\mathcal{N} \models \mathsf{KPI}+\Sigma_1\textsf{-Foundation}+\mathsf{V=L}.
\]
\end{theorem}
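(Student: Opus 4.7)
The plan is to verify each axiom of $\mathsf{KPI}+\Sigma_1\textsf{-Foundation}+\mathsf{V=L}$ in $\mathcal{N}$ by combining three observations: $\Sigma_2$-elementarity transfers both $\Sigma_2$- and $\Pi_2$-sentences in either direction; $\mathsf{M}^-$ is axiomatised by $\Pi_2$-sentences while \textsf{Infinity} is $\Sigma_1$; and, by Lemma \ref{Th:ComplexityOfFoundation} applied with $n=0$, the scheme $\Sigma_1\cup\Pi_1\textsf{-Foundation}$ is equivalent over $\mathsf{M}^-+\Delta_0\textsf{-Collection}$ to a scheme of $\Pi_2$-sentences.

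From these, most of the axioms come for free. Since $\mathcal{M}\models\mathsf{M}^-$ and $\mathsf{M}^-$ is $\Pi_2$-axiomatisable, $\Pi_2$-preservation gives $\mathcal{N}\models\mathsf{M}^-$. The $\Sigma_1$-sentence \textsf{Infinity} transfers upward from $\mathcal{M}$, and $\mathcal{N}\models\Delta_0\textsf{-Collection}$ is given by hypothesis, so $\mathcal{N}$ satisfies every axiom of $\mathsf{KPI}$ apart from $\Pi_1\textsf{-Foundation}$. For $\mathsf{V=L}$: the excerpt notes that $\mathsf{V=L}$ is expressible as a $\Pi_2$-sentence over $\mathsf{KP}$; this $\Pi_2$-sentence holds in $\mathcal{M}$, transfers to $\mathcal{N}$, and once we have established $\mathcal{N}\models\mathsf{KPI}$ it follows that $\mathcal{N}\models\mathsf{V=L}$.

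The core work is producing $\Pi_1\textsf{-Foundation}$ (and the bonus $\Sigma_1\textsf{-Foundation}$) in $\mathcal{N}$. The excerpt records that $\mathsf{MOST}$ is a subtheory of $\mathsf{M}+\Pi_1\textsf{-Collection}+\mathsf{V=L}$, so $\mathcal{M}\models\Sigma_1\textsf{-Separation}$; complementing a $\Sigma_1$-definable subclass inside a set yields $\Pi_1\textsf{-Separation}$ as well, and so $\mathcal{M}\models\Sigma_1\cup\Pi_1\textsf{-Separation}$. Combining this with \textsf{TCo} and \textsf{Set-Foundation} (both present in $\mathsf{M}^-$) in the standard way—separate the witnesses of a $\Sigma_1\cup\Pi_1$-formula $\phi$ inside $\mathrm{tc}(\{a\})$ for any witness $a$, then apply \textsf{Set-Foundation} to the resulting set—gives $\mathcal{M}\models\Sigma_1\cup\Pi_1\textsf{-Foundation}$. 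Invoking Lemma \ref{Th:ComplexityOfFoundation} with $n=0$ produces a $\Pi_2$-scheme $\Gamma$ equivalent over $\mathsf{M}^-+\Delta_0\textsf{-Collection}$ to $\Sigma_1\cup\Pi_1\textsf{-Foundation}$, so $\mathcal{M}\models\Gamma$; $\Pi_2$-preservation hands $\Gamma$ up to $\mathcal{N}$; and since $\mathcal{N}\models\mathsf{M}^-+\Delta_0\textsf{-Collection}$ from the previous paragraph, we conclude $\mathcal{N}\models\Sigma_1\cup\Pi_1\textsf{-Foundation}$, supplying both the missing $\Pi_1\textsf{-Foundation}$ needed for $\mathsf{KPI}$ and the additional $\Sigma_1\textsf{-Foundation}$ in the statement.

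The only nontrivial step is confirming that $\mathcal{M}$ satisfies the full $\Sigma_1\cup\Pi_1\textsf{-Foundation}$ scheme—everything else is an exercise in formula-complexity bookkeeping and the transfer principle afforded by $\Sigma_2$-elementarity. I expect the write-up to be short: one short paragraph for each of the preservation transfers and one for the foundation argument outlined above.
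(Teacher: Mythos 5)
Your proposal is correct and follows essentially the same route as the paper: the paper's (much terser) proof likewise transfers the $\Pi_2$-axiomatisation of $\mathsf{M}^-$ and of $\mathsf{V=L}$ up through $\Sigma_2$-elementarity and then invokes Lemma \ref{Th:ComplexityOfFoundation} to pull $\Sigma_1\cup\Pi_1\textsf{-Foundation}$ into $\mathcal{N}$, leaving implicit exactly the step you spell out (that $\mathcal{M}\models\Sigma_1\cup\Pi_1\textsf{-Foundation}$ via $\Sigma_1\textsf{-Separation}$, \textsf{TCo} and \textsf{Set-Foundation}). Your write-up simply makes the bookkeeping explicit.
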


\begin{proof}
Let $\mathcal{N}= \langle N, \in^{\mathcal{N}} \rangle$ be such that $\mathcal{M} \prec_{e, 2} \mathcal{N}$ and $\mathcal{N} \models \Delta_0\textsf{-Collection}$. Since $\mathcal{M} \prec_{e, 2} \mathcal{N}$, $\mathcal{N} \models \mathsf{M}^-+\Delta_0\textsf{-Collection}+\mathsf{V=L}$. Therefore, by Lemma \ref{Th:ComplexityOfFoundation},
\[
\mathcal{N} \models \mathsf{KPI}+\Sigma_1\textsf{-Foundation}+\mathsf{V=L}.
\] 
\Square
\end{proof}

The presence \textsf{Powerset} in $\mathsf{M}$ allows $\Sigma_{n+1}\textsf{-Separation}$ to be derived from $\Pi_n\textsf{-Collection}$ together with $\Pi_n\textsf{-Foundation}$. The following is \cite[Theorem 5.9]{mck25}:

\begin{theorem} \label{Th:FoundationImpliesSeparation}
Let $n \in \omega$ with $n \geq 1$. The theory $\mathsf{M}+\Pi_n\textsf{-Collection}+\Pi_{n+1}\textsf{-Foundation}$ proves $\Sigma_{n+1}\textsf{-Separation}$. \Square
\end{theorem}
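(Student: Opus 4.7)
Fix a $\Sigma_{n+1}$-formula $\phi(x, \vec{z}) \equiv \exists y\, \psi(x, y, \vec{z})$ with $\psi \in \Pi_n$, together with parameters $\vec{z}$ and a set $w$; the aim is to exhibit $\{x \in w : \phi(x, \vec{z})\}$ as a set. The plan is a reflection argument: produce a single ordinal $\beta$ such that every $x \in w$ with $\phi(x, \vec{z})$ admits a $\psi$-witness in $V_\beta$, and then conclude by the $\Delta_{n+1}\textsf{-Separation}$ already available in the base theory.

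Once such a $\beta$ is in hand, the bounded formula $\exists y \in V_\beta\, \psi(x, y, \vec{z})$ is $\Delta_{n+1}^T$. Indeed $\neg\psi \in \Sigma_n \subseteq \Sigma_{n+1} \cap \Pi_{n+1}$, and by Proposition 2.4 of \cite{flw16} both $\Sigma_{n+1}^T$ and $\Pi_{n+1}^T$ are closed under bounded quantification under $\mathsf{M}^- + \Pi_n\textsf{-Collection}$; hence $\forall y \in V_\beta\, \neg\psi \in \Sigma_{n+1}^T \cap \Pi_{n+1}^T = \Delta_{n+1}^T$, and so does its negation. Since $\mathsf{M}^- + \Pi_n\textsf{-Collection}$ proves $\Delta_{n+1}\textsf{-Separation}$ (Theorem 4.13 of \cite{flw16}),
\[
\{x \in w : \phi(x, \vec{z})\} \;=\; \{x \in w : \exists y \in V_\beta\, \psi(x, y, \vec{z})\}
\]
is then a set. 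The cumulative hierarchy $V_\alpha$ used here is $\Delta_1$-definable via $\Sigma_1$-recursion from \textsf{Powerset} and $\Delta_0\textsf{-Collection}$, both consequences of $\mathsf{M} + \Pi_1\textsf{-Collection}$.

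To produce $\beta$ I use $\Pi_{n+1}\textsf{-Foundation}$ and $\Pi_n\textsf{-Collection}$ in tandem. For each $x \in w$ satisfying $\phi(x, \vec{z})$, $\Pi_{n+1}\textsf{-Foundation}$ applied to the non-empty $\Sigma_{n+1}$-definable class $\{\alpha \in \mathsf{Ord} : \exists y \in V_\alpha\, \psi(x, y, \vec{z})\}$ produces its least element $\alpha_x$; the graph ``$\alpha = \alpha_x$'' unfolds to $\exists y \in V_\alpha\, \psi(x, y, \vec z) \wedge \forall \alpha' \in \alpha\, \forall y \in V_{\alpha'}\, \neg\psi(x, y, \vec z)$, each conjunct being $\Delta_{n+1}^T$ by the above bounded-quantifier closure, so the graph itself is $\Delta_{n+1}^T$. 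I then argue by contradiction: if no $\beta$ bounds the $\alpha_x$, the class $\{\alpha_x : x \in w,\ \phi(x, \vec{z})\}$ is cofinal in $\mathsf{Ord}$. For any ordinal $\delta$ the $\Sigma_{n+1}$-assertion $\forall \gamma \in \delta\, \exists \langle x, \alpha\rangle\, (x \in w \wedge \alpha = \alpha_x \wedge \alpha > \gamma)$ then holds, and $\Sigma_{n+1}\textsf{-Collection}$ ($= \Pi_n\textsf{-Collection}$) returns a set $c = c_\delta$ of such pairs whose second coordinates are cofinal in $\delta$. Applying $\Pi_{n+1}\textsf{-Foundation}$ to the class of ``safe'' $\delta$ (those strictly above the rank of some appropriate collection-witness for them) extracts a minimal safe $\delta^*$, at which the second coordinates of $c_{\delta^*}$ are both cofinal in $\delta^*$ and bounded strictly below it, the desired contradiction.

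\textbf{Main obstacle.} The delicate step is the collection formula itself. The most natural formulation, ``$\exists \alpha\, (\alpha = \alpha_x) \vee \neg\phi(x, \vec{z})$'', needed so that the collection hypothesis holds on all of $w$ rather than only on the $\phi$-satisfying part, sits in $\Pi_{n+1}^T$ rather than $\Sigma_{n+1}^T$, so that $\Sigma_{n+1}\textsf{-Collection}$ cannot be applied directly. The intended resolution exploits the key observation that ``$\alpha = \alpha_x$'' already implies $\phi(x, \vec{z})$, together with the $\Delta_{n+1}^T$-definability of the minimal-rank function, to rewrite the mixed disjunction as a genuinely $\Sigma_{n+1}^T$-formula before applying collection. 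A secondary subtlety is that the set $c_\delta$ returned by collection depends on $\delta$, so the class of ``safe'' $\delta$ must be shown to be $\Pi_{n+1}^T$-definable so that $\Pi_{n+1}\textsf{-Foundation}$ can extract a minimal such $\delta^*$. Tracking every auxiliary formula to confirm that it lies within $\Sigma_{n+1}^T$, $\Pi_{n+1}^T$, or $\Delta_{n+1}^T$ --- rather than one level higher --- without ever appealing to $\Sigma_{n+1}\textsf{-Separation}$ is the technical heart of the argument.
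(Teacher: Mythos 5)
First, note that the paper does not actually prove this statement: it is quoted verbatim from \cite[Theorem 5.9]{mck25}, so there is no in-paper argument to compare yours against, and your proposal has to stand on its own. Your opening reduction is sound and is surely the right first move: once an ordinal $\beta$ bounding the least witness-ranks $\alpha_x$ is found, the formula $(\exists y \in V_\beta)\psi(x,y,\vec z)$ is indeed $\Delta_{n+1}^T$ by closure under bounded quantification, and $\Delta_{n+1}\textsf{-Separation}$ finishes the job; your complexity bookkeeping for the graph of $x \mapsto \alpha_x$ is also correct. The problem is that the bounding step --- which is the entire content of the theorem --- is not actually carried out, and the two devices you propose for it both fail.

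The ``main obstacle'' you flag is fatal as you propose to resolve it, and here is a decisive reason why no rewriting of $\neg\phi(x,\vec z) \vee (\alpha = \alpha_x)$ as a $\Sigma_{n+1}^T$-formula can exist over the base theory. The existence of $\alpha_x$ for each $x$ satisfying $\phi$ only needs foundation for a $\Delta_{n+1}^T$-class, and $\Delta_{n+1}\textsf{-Foundation}$ already follows from $\mathsf{M}^- + \Pi_n\textsf{-Collection}$ (via $\Delta_{n+1}\textsf{-Separation}$, $\mathsf{TCo}$ and \textsf{Set-Foundation}). So if the collection instance you want were genuinely $\Sigma_{n+1}^T$, your entire argument would go through in $\mathsf{M}+\Pi_n\textsf{-Collection}$ alone. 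But $\Sigma_{n+1}\textsf{-Separation}$ implies $\Pi_{n+1}\textsf{-Separation}$ (take complements inside $w$) and hence $\Pi_{n+1}\textsf{-Foundation}$, which by Theorem \ref{Th:CollectionDoesNotProveFoundation} is not provable from $\mathsf{M}+\Pi_n\textsf{-Collection}+\mathsf{V=L}$. So $\Pi_{n+1}\textsf{-Foundation}$ must enter the proof in an essential way that your argument does not supply. Your fallback, collecting over $\gamma \in \delta$ rather than over $x \in w$, is a legitimate instance of $\Sigma_{n+1}\textsf{-Collection}$, but it yields no contradiction: it only reproduces the assumption that the $\alpha_x$ are unbounded, one set at a time. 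The ``safe $\delta$'' step does not repair this --- the witness $c_\delta$ is not canonically determined by $\delta$ (so the class of safe $\delta$ is not $\Pi_{n+1}$-definable as written), there is no argument that any $\delta$ is safe (the second coordinates of $c_\delta$ need not lie below $\delta$), and the minimality of $\delta^*$ plays no role in the contradiction you describe. The real work in \cite[Theorem 5.9]{mck25} has to exploit \textsf{Powerset} beyond merely defining the $V_\alpha$-hierarchy --- roughly, one must work with the set $\mathcal{P}(w)$ and apply $\Pi_{n+1}\textsf{-Foundation}$ to a suitable $\Pi_{n+1}$-definable class built from it, rather than trying to collect witness-ranks over $w$ directly --- and you should consult that proof for the missing idea.
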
 

This means that if we have enough partial-elementarity to transfer \textsf{Powerset} to the end extension and we know that the end extension satisfies enough collection, then more separation is also transferred to the end extension. 

\begin{theorem} \label{Th:TheoryInEndExtensionWithCollection}
Let $n \in \omega$ with $n \geq 2$. Let $\mathcal{M}= \langle M, \in^{\mathcal{M}} \rangle$ be such that $\mathcal{M} \models \mathsf{M}+\Pi_n\textsf{-Collection}+\mathsf{V=L}$. If $\mathcal{N}= \langle N, \in^{\mathcal{N}} \rangle$ is such that $\mathcal{M} \prec_{e, n+1} \mathcal{N}$ and $\mathcal{N} \models \Pi_{n-1}\textsf{-Collection}$, then 
\[
\mathcal{N} \models \mathsf{M}+\Pi_{n-1}\textsf{-Collection}+\Sigma_n\textsf{-Separation}+\mathsf{V=L}. 
\]
\end{theorem}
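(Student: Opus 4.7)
The plan is to leverage Theorem \ref{Th:TheoryInEndExtensionWithoutCollection}, together with the hypothesis $\mathcal{N} \models \Pi_{n-1}\textsf{-Collection}$, to obtain every piece of the conclusion except $\Sigma_n\textsf{-Separation}$ for free. Since Theorem \ref{Th:FoundationImpliesSeparation} (applied with index $n-1$) says that $\mathsf{M}+\Pi_{n-1}\textsf{-Collection}+\Pi_n\textsf{-Foundation}$ proves $\Sigma_n\textsf{-Separation}$, the whole proof reduces to verifying that $\Pi_n\textsf{-Foundation}$ holds in $\mathcal{N}$. So the engine I would build is a transfer of $\Pi_n\textsf{-Foundation}$ from $\mathcal{M}$ to $\mathcal{N}$ across the $\Sigma_{n+1}$-elementary end extension.

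First I would check that $\mathcal{M}$ itself satisfies $\Sigma_n \cup \Pi_n\textsf{-Foundation}$. This is because $\mathsf{M}^-+\Pi_n\textsf{-Collection}$ proves $\Delta_{n+1}\textsf{-Separation}$ by \cite[Theorem 4.13]{flw16}, every $\Sigma_n$- or $\Pi_n$-formula is trivially $\Delta_{n+1}^T$ for $T = \mathsf{M}^-+\Pi_n\textsf{-Collection}$, and the presence of $\mathsf{TCo}$ together with \textsf{Set-Foundation} in $\mathsf{M}^-$ converts separation into foundation for any class of formulae. Next I would invoke Lemma \ref{Th:ComplexityOfFoundation} with index $n-1$ in place of $n$: this yields a collection $\Gamma$ of $\Pi_{n+1}$-sentences such that over $\mathsf{M}^-+\Pi_{n-1}\textsf{-Collection}$, $\Gamma$ is equivalent to $\Sigma_n \cup \Pi_n\textsf{-Foundation}$. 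Because $\mathcal{M} \models \mathsf{M}^-+\Pi_{n-1}\textsf{-Collection}$ together with $\Sigma_n \cup \Pi_n\textsf{-Foundation}$, one direction of the lemma gives $\mathcal{M} \models \Gamma$.

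Now the key transfer step: $\Gamma$ is a set of $\Pi_{n+1}$-\emph{sentences}, so $\mathcal{M} \prec_{e, n+1} \mathcal{N}$ forces $\mathcal{N} \models \Gamma$. Combining this with the fact that $\mathcal{N} \models \mathsf{M}^- + \Pi_{n-1}\textsf{-Collection}$ (from Theorem \ref{Th:TheoryInEndExtensionWithoutCollection} and the hypothesis), the other direction of Lemma \ref{Th:ComplexityOfFoundation} yields $\mathcal{N} \models \Sigma_n \cup \Pi_n\textsf{-Foundation}$, and in particular $\mathcal{N} \models \Pi_n\textsf{-Foundation}$. Applying Theorem \ref{Th:FoundationImpliesSeparation} in $\mathcal{N}$ (with index $n-1$) then delivers $\mathcal{N} \models \Sigma_n\textsf{-Separation}$, completing the proof. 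The only real obstacle is bookkeeping: one must be careful that the quantifier-complexity collapse produced by $\Pi_{n-1}\textsf{-Collection}$ brings \textsf{Foundation} down to $\Pi_{n+1}$, which is exactly at the boundary of what a $\Sigma_{n+1}$-elementary embedding preserves; the hypothesis that $\mathcal{N}$ itself satisfies $\Pi_{n-1}\textsf{-Collection}$ is what makes this collapse available on the $\mathcal{N}$ side as well, and without it the argument would break.
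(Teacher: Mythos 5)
Your proposal is correct and follows essentially the same route as the paper: transfer $\mathsf{M}+\mathsf{V=L}$ by elementarity, use Lemma \ref{Th:ComplexityOfFoundation} (at index $n-1$) to push $\Pi_n\textsf{-Foundation}$ from $\mathcal{M}$ to $\mathcal{N}$ via its $\Pi_{n+1}$-axiomatization over $\Pi_{n-1}\textsf{-Collection}$, and then apply Theorem \ref{Th:FoundationImpliesSeparation} in $\mathcal{N}$ to get $\Sigma_n\textsf{-Separation}$. You have merely unpacked the details (in particular why $\mathcal{M}$ itself satisfies $\Sigma_n\cup\Pi_n\textsf{-Foundation}$) that the paper's terse proof leaves implicit.
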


\begin{proof}
Let $\mathcal{N}= \langle N, \in^{\mathcal{N}} \rangle$ be such that $\mathcal{M} \prec_{e, n+1} \mathcal{N}$ and $\mathcal{N} \models \Pi_{n-1}\textsf{-Collection}$. Since $n \geq 2$ and $\mathcal{M} \prec_{e, n+1} \mathcal{N}$, $\mathcal{N} \models \mathsf{M}+\Pi_{n-1}\textsf{-Collection}+\mathsf{V=L}$. So, by Lemma \ref{Th:ComplexityOfFoundation}, $\mathcal{N} \models \mathsf{M}+\Pi_{n-1}\textsf{-Collection}+\Pi_{n}\textsf{-Foundation}+\mathsf{V=L}$. Therefore, since $n \geq 2$, Theorem \ref{Th:FoundationImpliesSeparation} yields $\mathcal{N} \models \mathsf{M}+\Pi_{n-1}\textsf{-Collection}+\Sigma_{n}\textsf{-Separation}+\mathsf{V=L}$.
\Square
\end{proof}

\section[$\Sigma_2$-elementary end extensions]{$\Sigma_2$-elementary end extensions that satisfy $\Delta_0$-Collection} \label{Sec:BaseCase}

In this section we show that if $\mathcal{M}$ is a model of $\mathsf{M}+\Pi_1\textsf{-Collection}+\mathsf{V=L}$ that has a $\Sigma_2$-elementary end extension, $\mathcal{N}$, that satisfies $\Delta_0\textsf{-Collection}$ and such that $\mathcal{N}$ contains a new ordinal but no least new ordinal, then $\mathcal{M}$ musty satisfy $\Pi_2\textsf{-Collection}$. This result is used to provide a negative answer to Kaufmann's Question (Question \ref{Q:KaufmannQuestion}) by showing that the minimum model of $\mathsf{M}+\Pi_1\textsf{-Collection}$ has no proper $\Sigma_2$-elementary end extension that satisfies $\Delta_0\textsf{-Collection}$.

The following is a slight strengthening of \cite[Lemma 5.7]{mck25} that is obtained by observing that the proof of \cite[Lemma 5.7]{mck25} makes no use of the assumption that the end extension satisfies $\mathsf{M}$:

\begin{lemma} \label{Th:ElementarityImpliesPowersetPreserving}
Let $\mathcal{M}= \langle M, \in \rangle$ and $\mathcal{N}= \langle N, \in^{\mathcal{N}} \rangle$ be such that $\mathcal{M} \models \mathsf{M}$. If $\mathcal{M} \prec_{e, 1} \mathcal{N}$, then $\mathcal{M} \subseteq_e^{\mathcal{P}} \mathcal{N}$. \Square
\end{lemma}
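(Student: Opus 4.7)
The plan is to exploit the presence of \textsf{Powerset} in $\mathsf{M}$ to obtain, for each set in $\mathcal{M}$, a witness in $M$ that contains all of its subsets, and then transfer this fact to $\mathcal{N}$ using the partial elementarity. Let $y \in M$ and $x \in N$ be arbitrary with $\mathcal{N} \models x \subseteq y$; the goal is to show $x \in M$.

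First, I would apply \textsf{Powerset} inside $\mathcal{M}$ to fix some $p \in M$ with $\mathcal{M} \models \forall z(z \in p \iff z \subseteq y)$. In particular,
\[
\mathcal{M} \models \forall z(z \subseteq y \Rightarrow z \in p).
\]
Since $z \subseteq y$ is $\Delta_0$, this displayed sentence (with parameters $y, p \in M$) is $\Pi_1$. Next, I would observe that $\mathcal{M} \prec_{e,1} \mathcal{N}$ entails $\Pi_1$-elementarity: if $\phi$ is $\Sigma_1$ then $\mathcal{M} \models \phi(\vec{a}) \iff \mathcal{N} \models \phi(\vec{a})$ for $\vec{a} \in M$, and negating gives the same biconditional for $\neg\phi \in \Pi_1$. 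Applying this to the $\Pi_1$-sentence above yields
\[
\mathcal{N} \models \forall z(z \subseteq y \Rightarrow z \in p).
\]

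Finally, instantiating $z$ with $x$ in $\mathcal{N}$ and using the hypothesis $\mathcal{N} \models x \subseteq y$, we conclude $\mathcal{N} \models x \in p$. Because $p \in M$ and $\mathcal{M} \subseteq_e \mathcal{N}$, this forces $x \in M$, which is exactly what is needed for $\mathcal{M} \subseteq_e^{\mathcal{P}} \mathcal{N}$. There is no genuine obstacle here; the only point worth highlighting is that the argument uses \textsf{Powerset} in $\mathcal{M}$ but nowhere in $\mathcal{N}$, which is precisely the strengthening over \cite[Lemma 5.7]{mck25} mentioned in the remark preceding the statement.
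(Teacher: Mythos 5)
Your proof is correct and is exactly the argument the paper relies on: the paper does not reprove this lemma but defers to \cite[Lemma 5.7]{mck25}, whose proof is precisely this transfer of the $\Pi_1$-statement $\forall z(z \subseteq y \Rightarrow z \in p)$ about a powerset witness $p \in M$, and your closing observation that \textsf{Powerset} is used only in $\mathcal{M}$ is the very point the paper makes when stating the strengthening.
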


Note that a proper $\Sigma_2$-elementary end extension of a model of $\mathsf{M}+\Pi_1\textsf{-Collection}$ that satisfies $\Delta_0\textsf{-Collection}$ need not satisfy \textsf{Powerset} or the assertion that there is no greatest cardinal. Nevertheless, an overspill argument allows us to see that such an end extension must contain a new cardinal.

\begin{lemma} \label{Th:NewCardinalInExtension}
Let $\mathcal{M}= \langle M, \in^{\mathcal{M}} \rangle$ be such that $\mathcal{M} \models \mathsf{M}+\Pi_1\textsf{-Collection}+\mathsf{V=L}$. If $\mathcal{N}= \langle N, \in^{\mathcal{N}} \rangle$ is such that $\mathcal{M} \prec_{e, 2} \mathcal{N}$, $M \neq N$ and $\mathcal{N} \models \Delta_0\textsf{-Collection}$, then there exists $\kappa \in \mathsf{Ord}^{\mathcal{N}} \backslash \mathsf{Ord}^{\mathcal{M}}$ such that $\mathcal{N} \models (\kappa \textrm{ is a cardinal})$. 
\end{lemma}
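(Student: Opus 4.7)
The plan is to combine Theorem \ref{Th:KPIInSigma2EndExtension} with a Hartogs-style cardinal-successor argument, run by contradiction. Theorem \ref{Th:KPIInSigma2EndExtension} gives $\mathcal{N} \models \mathsf{KPI}+\Sigma_1\textsf{-Foundation}+\mathsf{V=L}$, and $\Sigma_2$-elementarity makes $\Pi_1$-statements with parameters in $M$ absolute between $\mathcal{M}$ and $\mathcal{N}$; in particular, ``$\alpha$ is a cardinal'' is absolute for $\alpha \in M$. Since $\mathcal{M}\subseteq_e\mathcal{N}$ and ordinals are linearly $\in$-ordered, every ordinal in $\mathsf{Ord}^\mathcal{N}\setminus\mathsf{Ord}^\mathcal{M}$ strictly exceeds every ordinal of $M$. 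From $\mathcal{N}\models\mathsf{V=L}$ and the $\Delta_1^{\mathsf{KP}}$-absoluteness of $\alpha\mapsto L_\alpha$, any witness to $M\neq N$ lies in some $L_\alpha^\mathcal{N}$ whose index $\alpha$ must be a new ordinal, so I fix a new ordinal $\beta_0$.

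Next, I would assume for contradiction that no element of $\mathsf{Ord}^\mathcal{N}\setminus\mathsf{Ord}^\mathcal{M}$ is a cardinal in $\mathcal{N}$. Working inside $\mathcal{N}$, the $\Sigma_1$-formula $\phi(\delta) \equiv (\delta \in \mathsf{Ord} \land \exists f(f:\delta\to\beta_0 \textrm{ is a bijection}))$ is satisfied by $\delta=\beta_0$, so $\Sigma_1\textsf{-Foundation}$ supplies a least witness $\delta_0$. Composing any purported bijection $g:\delta'\to\delta_0$ for $\delta'<\delta_0$ with $f:\delta_0\to\beta_0$ would contradict the minimality of $\delta_0$, so $\delta_0$ is a cardinal in $\mathcal{N}$; the contradiction hypothesis then forces $\delta_0\in\mathsf{Ord}^\mathcal{M}$.

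The third step is to build the successor cardinal $\delta_0^+$ inside $\mathcal{M}$. Because $\mathcal{M}\models\mathsf{M}+\Pi_1\textsf{-Collection}+\mathsf{V=L}$, \cite[Theorem 4.13]{flw16} supplies $\Delta_2\textsf{-Separation}$ (hence $\Sigma_1$- and $\Pi_1\textsf{-Separation}$); combined with \textsf{Powerset}, $\mathsf{V=L}$ (whence $\mathsf{AC}$), Mostowski's transitive collapse, and $\Pi_1\textsf{-Collection}$ applied to the order-type map on the set of well-orderings in $\mathcal{P}(\delta_0\times\delta_0)$, a Hartogs construction produces $\delta_0^+\in\mathcal{M}$, a cardinal of $\mathcal{M}$ strictly exceeding $\delta_0$. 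By $\Pi_1$-absoluteness $\mathcal{N}\models\delta_0^+\textrm{ is a cardinal}$; since $\delta_0^+\in M$ while $\beta_0\in N\setminus M$, end-extension forces $\delta_0^+<\beta_0$ in $\mathcal{N}$. But then $f^{-1}\upharpoonright\delta_0^+$ is an injection of $\delta_0^+$ into $\delta_0$ in $\mathcal{N}$, and applying Mostowski collapse to its range (available in $\mathsf{KP}$) yields a bijection between $\delta_0^+$ and an ordinal $\leq\delta_0<\delta_0^+$, contradicting the fact that $\delta_0^+$ is a cardinal in $\mathcal{N}$.

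The main obstacle is the successor-cardinal construction inside $\mathcal{M}$: one has to verify that $\Pi_1\textsf{-Collection}$ together with the separation available in $\mathsf{M}+\Pi_1\textsf{-Collection}$ and $\Delta_0\textsf{-Separation}$ applied to $\mathcal{P}(\delta_0\times\delta_0)$ really does suffice to gather the order-types of well-orderings of subsets of $\delta_0$ into a bounding set, from which $\delta_0^+$ can be extracted via $\Pi_1\textsf{-Foundation}$. The remainder of the argument (the $\Sigma_1\textsf{-Foundation}$ descent inside $\mathcal{N}$ and the restriction-of-bijection step) is then routine.
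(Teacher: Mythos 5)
Your proposal is correct, and its first half coincides exactly with the paper's proof: both invoke Theorem \ref{Th:KPIInSigma2EndExtension} to get $\mathsf{KPI}+\Sigma_1\textsf{-Foundation}+\mathsf{V=L}$ in $\mathcal{N}$, fix a new ordinal $\beta_0$, and use $\Sigma_1\textsf{-Foundation}$ in $\mathcal{N}$ to extract the least ordinal $\delta_0$ in bijection with $\beta_0$, which is then a cardinal of $\mathcal{N}$. Where you diverge is in showing that this cardinal is new. The paper first records (via Lemma \ref{Th:ElementarityImpliesPowersetPreserving}) that $\mathcal{M} \subseteq_e^{\mathcal{P}} \mathcal{N}$, so if $\delta_0 \in M$ then the well-ordering of $\delta_0$ of order type $\beta_0$, being a subset of $\delta_0 \times \delta_0 \in M$, already lies in $M$; since $\mathcal{M}$ extends $\mathsf{MOST}$ it computes the order type of that well-ordering, forcing $\beta_0 \in M$ --- a contradiction in one step. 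You instead run a Hartogs argument: form $\delta_0^+$ inside $\mathcal{M}$, push ``$\delta_0^+$ is a cardinal'' up to $\mathcal{N}$ by $\Pi_1$-upward-absoluteness, observe $\delta_0^+ < \beta_0$ by end-extension, and contradict cardinality of $\delta_0^+$ in $\mathcal{N}$ by restricting $f^{-1}$ and taking the order type of its range. Your worries about the Hartogs step are unfounded: $\mathsf{M}+\Pi_1\textsf{-Collection}+\mathsf{V=L}$ extends $\mathsf{MOST}$ and proves $\Delta_2\textsf{-Separation}$ and $\Sigma_2\textsf{-Collection}$, which is ample to collect the order types of well-orderings in $\mathcal{P}(\delta_0\times\delta_0)$ and extract the least ordinal not injecting into $\delta_0$ via $\Pi_1\textsf{-Foundation}$ (available since $\Delta_2\textsf{-Separation}$ gives $\Delta_2\textsf{-Foundation}$ over $\mathsf{M}^-$). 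The trade-off: the paper's route is shorter because powerset-preservation is already on the shelf and it needs only one application of the collapsing lemma in $\mathcal{M}$; your route never uses powerset-preservation of the extension, at the cost of the successor-cardinal construction in $\mathcal{M}$ and an order-type computation inside $\mathcal{N}$ (which is legitimate in $\mathsf{KPI}$ with $\Pi_1\textsf{-Foundation}$, but is an extra thing to check). Both arguments use \textsf{Powerset} in $\mathcal{M}$ essentially, so neither sheds light on Question \ref{Q:NecessityOfPowerset}.
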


\begin{proof}
Let $\mathcal{N}= \langle N, \in^{\mathcal{N}} \rangle$ be such that $\mathcal{M} \prec_{e, 2} \mathcal{N}$, $M \neq N$ and $\mathcal{N} \models \Delta_0\textsf{-Collection}$. By Theorem \ref{Th:KPIInSigma2EndExtension},
\[
\mathcal{N} \models \mathsf{KPI}+\Sigma_1\textsf{-Foundation}+\mathsf{V=L}.
\]
And, by Lemma \ref{Th:ElementarityImpliesPowersetPreserving}, $\mathcal{M} \subseteq_e^{\mathcal{P}} \mathcal{N}$. Using the fact that $M \neq N$, let $\beta \in \mathsf{Ord}^{\mathcal{N}} \backslash \mathsf{Ord}^{\mathcal{M}}$. Using $\Sigma_1\textsf{-Foudation}$ in $\mathcal{N}$, let $\kappa$ be the $\in$-least element of the class
\[
A= \{ \gamma \in \mathsf{Ord}^{\mathcal{N}} \mid \exists f(f: \gamma \longrightarrow \beta \textrm{ is a bijection})\}.
\]
Then $\mathcal{N} \models (\kappa \textrm{ is a cardinal})$. Now, suppose that $\kappa \in M$. But then, since $\mathcal{M} \subseteq_e^{\mathcal{P}} \mathcal{N}$, the well-ordering of $\kappa$ with order-type $\beta$ is also in $M$. Since $\mathcal{M} \models \mathsf{M}+\Pi_1\textsf{-Collection}+\mathsf{V=L}$, this implies that $\beta \in M$, which is a contradiction. Therefore, $\kappa \in \mathsf{Ord}^{\mathcal{N}} \backslash \mathsf{Ord}^{\mathcal{M}}$. \Square     
\end{proof}

In the theory $\mathsf{ZF}+\mathsf{V=L}$, if $\kappa > \omega$ is cardinal, then $\langle L_\kappa, \in \rangle$ satisfies $\mathsf{KPI}+\textsf{Separation}$, the universe is a powerset-preserving end extension of $L_\kappa$ and the universe is a $\Sigma_1$-elementary end extension of $L_\kappa$. Here we verify that these properties of $L_\kappa$, where $\kappa > \omega$ is a cardinal, can be proved in the theory $\mathsf{M}+\Pi_1\textsf{-Collection}+\mathsf{V=L}$.    

\begin{lemma} \label{Th:LKappaPowersetPreserving}
The theory $\mathsf{M}+\Pi_1\textsf{-Collection}+\mathsf{V=L}$ proves that if $\kappa > \omega$ is a cardinal, $x \in L_\kappa$ and $y \subseteq x$, then $y \in L_\kappa$. 
\end{lemma}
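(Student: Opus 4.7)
The plan is to apply G\"{o}del's Condensation Lemma to a small elementary submodel of some $L_\lambda$ containing $\mathrm{trcl}(x) \cup \{x, y\}$; the cardinality bound on the submodel will force the collapse to be $L_\beta$ for some $\beta < \kappa$, and transitivity of the relevant set will ensure that $y$ is fixed by the collapse.

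First I would fix $\gamma < \kappa$ with $x \in L_\gamma$, noting $|\mathrm{trcl}(x)| \leq |L_\gamma| = \max(|\gamma|, \omega) < \kappa$, a cardinality computation already available since $\mathsf{MOST} \subseteq \mathsf{M}+\Pi_1\textsf{-Collection}+\mathsf{V=L}$. By $\mathsf{V=L}$, pick $\mu$ with $y \in L_\mu$ and a limit ordinal $\lambda > \max(\mu, \kappa)$. Next I would build the Skolem hull $\langle X, \in \rangle \prec \langle L_\lambda, \in \rangle$ of $A := \mathrm{trcl}(x) \cup \{x, y\}$, using the $\Sigma_1$-definable global well-ordering of $L$ to select canonical Skolem witnesses; the usual $\omega$-step iteration yields an $X$ of cardinality $\max(|A|, \omega) < \kappa$.

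Then I would invoke the Condensation Lemma to obtain a transitive collapse $\pi \colon \langle X, \in \rangle \to \langle L_\beta, \in \rangle$ for some $\beta \leq \lambda$. Since $|\beta| = |L_\beta| \leq |X| < \kappa$ and $\kappa$ is a cardinal, $\beta < \kappa$. The set $A$ is transitive: elements of $\mathrm{trcl}(x)$ and of $x$ lie in $\mathrm{trcl}(x) \subseteq A$, and elements of $y$ lie in $x \subseteq \mathrm{trcl}(x) \subseteq A$. Because $A \subseteq X$ and $A$ is transitive, $\pi$ fixes $A$ pointwise; in particular $y = \pi(y) \in L_\beta \subseteq L_\kappa$, as required.

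The main obstacle I anticipate is formalizing the Skolem hull construction in the relatively weak theory $\mathsf{M}+\Pi_1\textsf{-Collection}+\mathsf{V=L}$: one has to verify, combining $\mathsf{AC}$ (from $\mathsf{V=L}$), the $\Sigma_1\textsf{-Separation}$ available in $\mathsf{MOST}$, and $\Pi_1\textsf{-Collection}$, that the $\omega$-step iteration producing $X$ can be carried out as a set of the stated cardinality. This uses the $\Delta_1^{\mathsf{KP}}$ satisfaction relation and the $\Delta_1^{\mathsf{KP}}$ definability of $\alpha \mapsto L_\alpha$ recalled in section \ref{Sec:Background}, and is routine but needs explicit verification in the weak base theory.
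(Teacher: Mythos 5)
Your proposal is correct and follows essentially the same route as the paper: fix $\gamma<\kappa$ with $x\in L_\gamma$, take a Skolem hull $\langle X,\in\rangle\prec\langle L_\lambda,\in\rangle$ of size $<\kappa$ containing a transitive set that includes $y$ and all its elements, and apply condensation to land $y$ in some $L_\beta$ with $\beta<\kappa$. The only cosmetic difference is that you hull $\mathrm{trcl}(x)\cup\{x,y\}$ where the paper hulls $L_\gamma\cup\{y\}$, and you spell out explicitly (via transitivity of the hulled set) why the collapse fixes $y$, a point the paper leaves implicit.
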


\begin{proof}
Work in the theory $\mathsf{M}+\Pi_1\textsf{-Collection}+\mathsf{V=L}$. Let $\kappa > \omega$ be a cardinal. Let $x \in L_\kappa$ and let $y \subseteq x$. Let $\gamma \in \kappa$ be such that $x \in L_\gamma$. Note that $|L_\gamma|< \kappa$. Let $\lambda$ be a limit ordinal such that $y, L_\gamma \in L_\lambda$. Taking a Skolem Hull of $\langle L_\lambda, \in \rangle$, let $\langle X, \in \rangle \prec \langle L_\lambda, \in \rangle$ be such that $L_\gamma \cup \{y\} \subseteq X$ and $|X| < \kappa$. Taking the transitive collapse of $\langle X, \in \rangle$ and using G\"{o}del's Condensation Lemma, we obtain $L_\beta \in L_\kappa$ such that $y \in L_\beta$. Therefore $y \in L_\kappa$. \Square
\end{proof}

\begin{lemma} \label{Th:LKappaAdmissible}
The theory $\mathsf{M}+\Pi_1\textsf{-Collection}+\mathsf{V=L}$ proves that if $\kappa > \omega$ is a cardinal, then $\langle L_\kappa, \in \rangle \models \mathsf{KPI}+\textsf{Separation}+\mathsf{V=L}$.  
\end{lemma}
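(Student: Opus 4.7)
The plan is to verify each axiom of $\mathsf{KPI}+\textsf{Separation}+\mathsf{V=L}$ in $\langle L_\kappa, \in \rangle$, working in the ambient theory $\mathsf{M}+\Pi_1\textsf{-Collection}+\mathsf{V=L}$. Since $\kappa > \omega$ is an infinite cardinal, it is a limit ordinal, so the finitary axioms of $\mathsf{KPI}$ (\textsf{Extensionality}, \textsf{Pair}, \textsf{Union}, \textsf{Emptyset}, \textsf{Infinity}, $\mathsf{TCo}$, \textsf{Set-Foundation}) together with $\mathsf{V=L}$ will be immediate from the standard closure of the constructible hierarchy at limit stages: if $x, y \in L_\alpha$ for some $\alpha < \kappa$, then $\{x,y\}$, $\bigcup x$ and $\mathrm{tcl}(x)$ all lie in $L_{\alpha + \omega} \subseteq L_\kappa$, and $\omega \in L_{\omega+1} \subseteq L_\kappa$.

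For full \textsf{Separation} in $L_\kappa$, fix a formula $\phi(x, \vec{z})$ and $w, \vec{z} \in L_\kappa$. Because $\alpha \mapsto L_\alpha$ is $\Delta_1^{\mathsf{KP}}$ and the satisfaction predicate for set structures is $\Delta_1^{\mathsf{KP}}$, the formula $\langle L_\kappa, \in \rangle \models \phi(x, \vec{z})$ is $\Delta_1$ in the parameters $\kappa, x, \vec{z}$. Since $\mathsf{M}^-+\Pi_1\textsf{-Collection}$ proves $\Delta_2\textsf{-Separation}$, the set $y = \{x \in w : \langle L_\kappa, \in \rangle \models \phi(x, \vec{z})\}$ exists; as $y \subseteq w \in L_\kappa$, Lemma \ref{Th:LKappaPowersetPreserving} gives $y \in L_\kappa$, witnessing the \textsf{Separation} instance. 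The $\Pi_1\textsf{-Foundation}$ scheme (indeed full \textsf{Foundation}) in $L_\kappa$ then drops out of full \textsf{Separation}, \textsf{Set-Foundation} and $\mathsf{TCo}$ via the standard trick of separating inside a transitive closure.

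The heart of the argument is $\Delta_0\textsf{-Collection}$ in $L_\kappa$. Given $w, \vec{z} \in L_\kappa$ and a $\Delta_0$-formula $\phi(x, y, \vec{z})$, suppose $L_\kappa \models (\forall x \in w)\exists y\, \phi(x, y, \vec{z})$. By $\Delta_0$-absoluteness, every $x \in w$ has such a witness in $L_\kappa$. Choose $\gamma < \kappa$ with $w, \vec{z} \in L_\gamma$ and pick any limit ordinal $\lambda \geq \kappa$; then $L_\lambda \models (\forall x \in w)\exists y\, \phi(x, y, \vec{z})$. Form the Skolem hull $\langle X, \in \rangle \prec \langle L_\lambda, \in \rangle$ generated by $L_\gamma$. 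Since $\kappa > \omega$ is a cardinal and $\gamma < \kappa$, we have $|L_\gamma| = \max(|\gamma|, \omega) < \kappa$, so $|X| < \kappa$. G\"{o}del's Condensation Lemma (available in $\mathsf{MOST} \subseteq \mathsf{M}+\Pi_1\textsf{-Collection}+\mathsf{V=L}$) collapses $\langle X, \in \rangle$ to some $\langle L_\beta, \in \rangle$ with $|\beta| = |X| < \kappa$; because $\kappa$ is a cardinal, $\beta < \kappa$. The collapse is the identity on $L_\gamma$, so $w, \vec{z}$ are fixed and, by elementarity, $L_\beta \models (\forall x \in w)\exists y\, \phi(x, y, \vec{z})$. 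Then $c := L_\beta \in L_\kappa$ witnesses the \textsf{Collection} instance by $\Delta_0$-absoluteness.

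The main obstacle is precisely this $\Delta_0\textsf{-Collection}$ step: one cannot simply bound the witness levels with $\Sigma_1$-Replacement, because in the ambient universe $\kappa$ may be singular, so the sup of least witness levels could be cofinal in $\kappa$. The Skolem hull plus Condensation trick circumvents the regularity issue by compressing a large model into a small $L_\beta$ lying below $\kappa$. All of the required ingredients---definable satisfaction for set structures, Skolem hull construction via $\Sigma_1$-Replacement from $\Pi_1\textsf{-Collection}$, and Condensation---are present in $\mathsf{M}+\Pi_1\textsf{-Collection}+\mathsf{V=L}$, paralleling the method of Lemma \ref{Th:LKappaPowersetPreserving}.
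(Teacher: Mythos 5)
Your proposal is correct and follows essentially the same route as the paper: $\mathsf{M}^-+\textsf{Separation}$ in $L_\kappa$ comes from $L_\kappa$ being a set together with Lemma \ref{Th:LKappaPowersetPreserving}, and $\Delta_0\textsf{-Collection}$ is verified by the same Skolem hull plus Condensation argument (the paper reflects the statement already bounded by $L_\kappa$ inside some $L_\lambda$ with $\lambda>\kappa$, whereas you reflect the unbounded statement and take $c=L_\beta$ itself as the collecting set, an inessential variation). Your closing remark about why a naive $\Sigma_1$-replacement bound fails when $\kappa$ is singular correctly identifies the reason the paper uses this method.
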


\begin{proof}
Work in the theory $\mathsf{M}+\Pi_1\textsf{-Collection}+\mathsf{V=L}$. Let $\kappa > \omega$ is a cardinal. It follows from Lemma \ref{Th:LKappaPowersetPreserving} and the fact that $L_\kappa$ is a set that $\langle L_\kappa, \in \rangle \models \mathsf{M}^-+\textsf{Separation}$. To verify $\Delta_0\textsf{-Collection}$, let $\phi(x, y, \vec{z})$ be a $\Delta_0$-formula. Let $\vec{a}, b \in L_\kappa$ be such that
\[
\langle L_\kappa, \in \rangle \models (\forall x \in b) \exists y \phi(x, y, \vec{a}).
\]
Let $\gamma \in \kappa$ be such that $\vec{a}, b \in L_\gamma$. Note that $|L_\gamma| < \kappa$. Let $\lambda > \kappa$ be a limit ordinal. Since $\phi(x, y, \vec{z})$ is $\Delta_0$ and $L_\kappa \in L_\lambda$,
\[
\langle L_\lambda, \in \rangle \models \exists c (\forall x \in b)(\exists y \in c) \phi(x, y, \vec{a}).
\]
Taking the Skolem Hull of $\langle L_\lambda, \in \rangle$, let $\langle X, \in \rangle \prec \langle L_\lambda, \in \rangle$ be such that $L_\gamma \subseteq X$ and $|X|< \kappa$. Taking the transitive collapse of $\langle X, \in \rangle$, we obtain $L_\beta \in L_\kappa$ and an isomorphism between $\langle X, \in \rangle$ and $\langle L_\beta, \in \rangle$ that is the identity on $\vec{a}, b$. Therefore,
\[
\langle L_\beta, \in \rangle \models \exists c (\forall x \in b)(\exists y \in c) \phi(x, y, \vec{a}).
\]
And so, since $\phi(x, y, \vec{a})$ is $\Delta_0$,
\[
\langle L_\kappa, \in \rangle \models \exists c (\forall x \in b)(\exists y \in c) \phi(x, y, \vec{a}).
\]
This shows that $\langle L_\kappa, \in \rangle$ satisfies $\Delta_0\textsf{-Collection}$, and so
\[
\langle L_\kappa, \in \rangle \models \mathsf{KPI}+\textsf{Separation}+\mathsf{V=L}.
\]   
\Square
\end{proof}

\begin{lemma} \label{Th:LKappaSigma1Elementary}
The theory $\mathsf{M}+\Pi_1\textsf{-Collection}+\mathsf{V=L}$ proves that if $\kappa > \omega$ is a a cardinal, then for all $k \in \omega$ and for all $x \in L_\kappa$, if $\mathsf{Sat}_{\Sigma_1}(k, x)$, then $\langle L_\kappa, \in \rangle \models \mathsf{Sat}_{\Sigma_1}(k, x)$.
\end{lemma}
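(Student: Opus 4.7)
The plan is to mirror the Skolem-hull plus condensation argument used in Lemmas \ref{Th:LKappaPowersetPreserving} and \ref{Th:LKappaAdmissible}, now applied to reflect an existential witness down into $L_\kappa$. Working in $\mathsf{M}+\Pi_1\textsf{-Collection}+\mathsf{V=L}$, fix a cardinal $\kappa > \omega$, $k \in \omega$, and $x \in L_\kappa$ with $\mathsf{Sat}_{\Sigma_1}(k,x)$. Unpack: $k$ is the G\"odel code of a $\Sigma_1$-formula $\phi(\vec{v}) \equiv \exists y\, \psi(y,\vec{v})$ with $\psi$ a $\Delta_0$-formula, $x = \langle a_1,\dots,a_\ell\rangle$ is a tuple in $L_\kappa$, and the hypothesis gives $\exists y\, \psi(y,\vec{a})$ in the universe. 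Our goal is to produce such a witness inside $L_\kappa$.

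First I would choose the witness and a suitable ambient level. By $\mathsf{V=L}$, pick an ordinal $\mu$ with $\psi(y,\vec{a})$ witnessed by some $y\in L_\mu$, and enlarge to a limit ordinal $\lambda > \kappa$ with $y\in L_\lambda$ and $\vec{a}\in L_\lambda$. Choose $\gamma < \kappa$ with $\vec{a}\in L_\gamma$; note $|L_\gamma| < \kappa$, since $\kappa$ is a cardinal. Then $\langle L_\lambda,\in\rangle \models \exists y\, \psi(y,\vec{a})$, because $\psi$ is $\Delta_0$ and hence absolute between transitive classes.

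Next I would run the Skolem-hull step. Exactly as in the proof of Lemma \ref{Th:LKappaAdmissible}, take $\langle X,\in\rangle \prec \langle L_\lambda,\in\rangle$ with $L_\gamma \subseteq X$ and $|X| < \kappa$. By Mostowski's collapsing lemma (available since $\mathsf{MOST}$ is a subtheory of $\mathsf{M}+\Pi_1\textsf{-Collection}+\mathsf{V=L}$) together with G\"odel's Condensation Lemma, the transitive collapse of $\langle X,\in\rangle$ is some $\langle L_\beta,\in\rangle$, and because $L_\gamma \subseteq X$ is already transitive the collapse is the identity on $L_\gamma$, hence fixes $\vec{a}$. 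Moreover $|L_\beta| = |X| < \kappa$, so $\beta < \kappa$ and $L_\beta \in L_\kappa$. Since $\exists y\,\psi(y,\vec{a})$ holds in $L_\lambda$ and $\vec{a}\in X$, elementarity transfers it to $X$, and then the collapse (identity on $\vec{a}$, $\Delta_0$-absolute on $\psi$) transfers it to $L_\beta$: $\langle L_\beta,\in\rangle \models \exists y\, \psi(y,\vec{a})$.

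Finally I would pull the witness up to $L_\kappa$: any witness $y \in L_\beta \subseteq L_\kappa$ satisfies $\psi(y,\vec{a})$ in $L_\kappa$, since $\psi$ is $\Delta_0$ and $L_\beta \subseteq_e L_\kappa$. Therefore $\langle L_\kappa,\in\rangle \models \exists y\,\psi(y,\vec{a})$, that is, $\langle L_\kappa,\in\rangle \models \mathsf{Sat}_{\Sigma_1}(k,x)$. The only real obstacle is verifying that the Skolem-hull construction and condensation are genuinely available in the base theory, but both are already used in Lemmas \ref{Th:LKappaPowersetPreserving} and \ref{Th:LKappaAdmissible}; no additional foundation or separation beyond what is provided by $\mathsf{M}+\Pi_1\textsf{-Collection}+\mathsf{V=L}$ (which contains $\mathsf{MOST}$) is needed. \Square
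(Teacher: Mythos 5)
Your proof is correct and follows essentially the same route as the paper's: pick a limit level $L_\lambda$ above $\kappa$ containing a witness, take a Skolem hull of size $<\kappa$ containing $L_\gamma$, condense to some $L_\beta \in L_\kappa$ fixing the parameters, and push the witness back up to $L_\kappa$ by $\Delta_0$-absoluteness. The one cosmetic difference is that the paper avoids your informal ``unpacking'' of the internal code $k$ into a concrete metatheoretic formula (not literally available inside the theory, where $k$ is a bound variable ranging over the internal $\omega$) by instead fixing a single $\Delta_0$-formula $\phi(k,x,y)$ with $\mathsf{KPI} \vdash \forall k \forall x(\mathsf{Sat}_{\Sigma_1}(k,x) \iff \exists y\, \phi(k,x,y))$ and reflecting a witness $y$ for that; your argument goes through verbatim with this substitution.
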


\begin{proof}
Work in the theory $\mathsf{M}+\Pi_1\textsf{-Collection}+\mathsf{V=L}$. Let $\kappa > \omega$ be a cardinal. Let $\phi(k, x, y)$ be a $\Delta_0$-formula such that 
\[
\mathsf{KPI} \vdash \forall k \forall x(\mathsf{Sat}_{\Sigma_1}(k, x) \iff \exists y \phi(k, x, y)).  
\]
Let $k \in \omega$ and let $x \in L_\kappa$. Suppose that $\mathsf{Sat}_{\Sigma_1}(k, x)$ holds. Let $y$ be such that $\phi(k, x, y)$ holds. Let $\lambda> \kappa$ be a limit ordinal such that $y \in  L_\lambda$. So, $\langle L_\lambda, \in \rangle \models \exists y \phi(k, x, y)$. Let $\gamma \in \kappa$ with be such that $\omega, x \in L_\gamma$. Note that $|L_\gamma|< \kappa$. Taking the Skolem Hull of $\langle L_\lambda, \in \rangle$, let $\langle X, \in \rangle \prec \langle L_\lambda, \in \rangle$ be such that $L_\gamma \subseteq X$ and $|X| < \kappa$. Taking the collapse $\langle X, \in \rangle$, we obtain $L_\beta \in L_\kappa$ and an isomorphism between $\langle X, \in \rangle$ and $\langle L_\beta, \in \rangle$ that is the identity on $x$ and $k$. Therefore, $\langle L_\beta, \in \rangle \models \exists y \phi(k, x, y)$. So, since $\phi$ is $\Delta_0$, $\langle L_\kappa, \in \rangle \models \exists y \phi(k, x, y)$. Therefore, $\langle L_\kappa, \in \rangle \models \mathsf{Sat}_{\Sigma_1}(k, x)$. 
\Square 
\end{proof}

The proofs of Lemmas \ref{Th:LKappaPowersetPreserving}, \ref{Th:LKappaAdmissible} and \ref{Th:LKappaSigma1Elementary} make  use of the fact that $\mathsf{M}+\Pi_1\textsf{-Collection}+\mathsf{V=L}$ proves the Mostowski Collapsing Lemma, a result that is not provable in $\mathsf{KPI}+\Sigma_1\textsf{-Foundation}+\mathsf{V=L}$. Despite this, if $\mathcal{M}$ satisfies $\mathsf{M}+\Pi_1\textsf{-Collection}+\mathsf{V=L}$, then we can exploit elementarity to show that if $\mathcal{M} \prec_{e, 2} \mathcal{N}$ and $\mathcal{N}$ satisfies $\Delta_0\textsf{-Collection}$, then the properties of $L_\kappa$ obtained from Lemmas \ref{Th:LKappaPowersetPreserving}, \ref{Th:LKappaAdmissible} and \ref{Th:LKappaSigma1Elementary} also hold for $L_\kappa$ in $\mathcal{N}$ where $\kappa$ is a cardinal in $\mathsf{Ord}^{\mathcal{N}} \backslash \mathsf{Ord}^{\mathcal{M}}$.

\begin{lemma} \label{Th:LKappaAdmissibleInExtension}
Let $\mathcal{M}= \langle M, \in^{\mathcal{M}} \rangle$ be such that $\mathcal{M} \models \mathsf{M}+\Pi_1\textsf{-Collection}+\mathsf{V=L}$. Let $\mathcal{N}= \langle N, \in^{\mathcal{N}} \rangle$ be such that $\mathcal{M} \prec_{e, 2} \mathcal{N}$ and $\mathcal{N} \models \Delta_0\textsf{-Collection}$. If $\kappa \in \mathsf{Ord}^{\mathcal{N}} \backslash \mathsf{Ord}^{\mathcal{M}}$ is such that $\mathcal{N} \models (\kappa \textrm{ is a cardinal})$, then
\[
\langle (L_\kappa^{\mathcal{N}})^*, \in^{\mathcal{N}} \rangle \models \mathsf{KPI}+\mathsf{Separation}+\mathsf{V=L}.
\] 
\end{lemma}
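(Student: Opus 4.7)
The plan is to argue one axiom at a time, transferring the relevant property from $\mathcal{M}$ to $\mathcal{N}$ via $\Sigma_2$-elementarity rather than attempting to re-run the Skolem Hull/Condensation argument of Lemma \ref{Th:LKappaAdmissible} inside $\mathcal{N}$ (which is precluded because $\mathcal{N}$ satisfies only $\Delta_0\textsf{-Collection}$ and hence need not prove Mostowski's Collapsing Lemma).

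Fix an axiom $\sigma$ of $\mathsf{KPI}+\mathsf{Separation}+\mathsf{V=L}$. First, I would apply Lemma \ref{Th:LKappaAdmissible} inside $\mathcal{M}$ to conclude that $\mathcal{M}$ satisfies the sentence
\[
(*)\quad \forall \mu \left( (\omega < \mu \land \mu \textrm{ is a cardinal}) \Rightarrow \langle L_\mu, \in \rangle \models \sigma \right).
\]
Next, I would verify that $(*)$ is $\Pi_2$: the predicate ``$\mu$ is a cardinal'' is $\Pi_1$, and since $\alpha \mapsto L_\alpha$ is a total, $\Delta_1^{\mathsf{KP}}$-definable operation and the relativization $\sigma^{(u)}$ is $\Delta_0$ in $u$, the formula ``$\langle L_\mu, \in \rangle \models \sigma$'' is $\Delta_1$. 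The universal closure over $\mu$ of the corresponding $\Sigma_1$-implication is then $\Pi_2$.

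Because $\mathcal{M} \prec_{e,2} \mathcal{N}$, $\Pi_2$-sentences transfer upward, so $(*)$ also holds in $\mathcal{N}$. Since $\mathcal{N}$ is an end extension of $\mathcal{M}$ and $\kappa \in \mathsf{Ord}^{\mathcal{N}} \setminus \mathsf{Ord}^{\mathcal{M}}$, the end-extension property forces $\omega <^{\mathcal{N}} \kappa$; together with the hypothesis that $\mathcal{N} \models (\kappa \textrm{ is a cardinal})$, instantiating $\mu = \kappa$ in $(*)$ gives $\mathcal{N} \models \langle L_\kappa, \in \rangle \models \sigma$. Unpacking the internal satisfaction predicate at the set $L_\kappa^{\mathcal{N}}$ in $\mathcal{N}$ yields $\sigma^{(L_\kappa^{\mathcal{N}})}$ holding in $\mathcal{N}$, which is exactly the external statement $\langle (L_\kappa^{\mathcal{N}})^*, \in^{\mathcal{N}} \rangle \models \sigma$. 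Running this argument for each axiom of $\mathsf{KPI}+\mathsf{Separation}+\mathsf{V=L}$ completes the proof.

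The main obstacle I anticipate is purely bookkeeping: one must confirm that ``$\langle L_\mu, \in \rangle \models \sigma$'' really is $\Delta_1$ uniformly in $\mu$ (for each fixed standard $\sigma$), so that $(*)$ lives in the $\Pi_2$-fragment where $\Sigma_2$-elementarity is available, and one must also observe that the internal/external agreement of the relativization $\sigma^{(L_\kappa^{\mathcal{N}})}$ follows just from $\Delta_0$-absoluteness together with $L_\kappa^{\mathcal{N}}$ being a set of $\mathcal{N}$. Once these complexity and absoluteness remarks are settled, the transfer and instantiation at $\kappa$ are essentially automatic, and no re-proof of Condensation inside $\mathcal{N}$ is required.
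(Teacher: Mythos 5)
Your proposal is correct and takes essentially the same approach as the paper: the paper likewise avoids re-running the condensation argument in $\mathcal{N}$ by packaging the conclusion of Lemma \ref{Th:LKappaAdmissible} as a $\Pi_2^{\mathsf{KPI}}$ sentence $\sigma_0(\omega)$ quantifying over all cardinals above $\omega$, transferring it to $\mathcal{N}$ via $\Sigma_2$-elementarity (using that $\mathcal{N}\models\mathsf{KPI}$ by Theorem \ref{Th:KPIInSigma2EndExtension}), and instantiating at $\kappa$. The only cosmetic difference is that the paper expresses satisfaction of the whole theory in a single internal formula rather than proceeding axiom by axiom.
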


\begin{proof}
By Theorem \ref{Th:KPIInSigma2EndExtension}, $\mathcal{N} \models \mathsf{KPI}+\Sigma_1\textsf{-Foundation}+\mathsf{V=L}$. Let $\kappa \in \mathsf{Ord}^{\mathcal{N}} \backslash \mathsf{Ord}^{\mathcal{M}}$ be such that $\mathcal{N} \models (\kappa \textrm{ is a cardinal})$. Let $\sigma_0(z)$ be the formula
\[
\forall \lambda((z \in \lambda \land \lambda \textrm{ is a cardinal}) \Rightarrow (\langle L_\lambda, \in \rangle \models \mathsf{KPI}+\mathsf{Separation}+\mathsf{V=L})).
\]
Note that $\sigma_0(z)$ is $\Pi_2^{\mathsf{KPI}}$. Now, Lemma \ref{Th:LKappaAdmissible} shows that $\mathcal{M} \models \sigma_0(\omega)$. So, since $\mathcal{M} \prec_{e, 2} \mathcal{N}$ and $\mathcal{N} \models \mathsf{KPI}$, $\mathcal{N} \models \sigma_0(\omega)$. Therefore, 
\[
\langle (L_\kappa^{\mathcal{N}})^*, \in^{\mathcal{N}} \rangle \models \mathsf{KPI}+\mathsf{Separation}+\mathsf{V=L}.
\]
\Square
\end{proof}

\begin{lemma} \label{Th:ElementarityOfLKappaInExtension}
Let $\mathcal{M}= \langle M, \in^{\mathcal{M}} \rangle$ be such that $\mathcal{M} \models \mathsf{M}+\Pi_1\textsf{-Collection}+\mathsf{V=L}$. Let $\mathcal{N}= \langle N, \in^{\mathcal{N}} \rangle$ be such that $\mathcal{M} \prec_{e, 2} \mathcal{N}$ and $\mathcal{N} \models \Delta_0\textsf{-Collection}$. If $\kappa \in \mathsf{Ord}^{\mathcal{N}} \backslash \mathsf{Ord}^{\mathcal{M}}$ is such that $\mathcal{N} \models (\kappa \textrm{ is a cardinal})$, then $\mathcal{M} \prec_{e, 2} \langle (L_\kappa^{\mathcal{N}})^*, \in^{\mathcal{N}} \rangle$.
\end{lemma}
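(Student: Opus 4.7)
The plan is to verify the two requirements for $\mathcal{M} \prec_{e,2} \langle (L_\kappa^{\mathcal{N}})^*, \in^{\mathcal{N}} \rangle$ separately: first the substructure/end-extension part, then the $\Sigma_2$-elementarity.

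First I would establish $M \subseteq (L_\kappa^{\mathcal{N}})^*$. Fix $a \in M$. Since $\mathcal{M} \models \mathsf{V=L}$, there is $\alpha \in \mathsf{Ord}^{\mathcal{M}}$ with $a \in L_\alpha^{\mathcal{M}}$. As $\alpha \mapsto L_\alpha$ is $\Delta_1^{\mathsf{KP}}$ and $\mathcal{M} \prec_{e,1} \mathcal{N}$, we have $L_\alpha^{\mathcal{M}} = L_\alpha^{\mathcal{N}}$. Because $\mathcal{M} \subseteq_e \mathcal{N}$, $\mathsf{Ord}^{\mathcal{M}}$ is an initial segment of $\mathsf{Ord}^{\mathcal{N}}$, so $\kappa \in \mathsf{Ord}^{\mathcal{N}} \setminus \mathsf{Ord}^{\mathcal{M}}$ forces $\alpha <^{\mathcal{N}} \kappa$, whence $a \in L_\kappa^{\mathcal{N}}$. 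The end-extension property $\mathcal{M} \subseteq_e \langle (L_\kappa^{\mathcal{N}})^*, \in^{\mathcal{N}} \rangle$ then follows immediately from $\mathcal{M} \subseteq_e \mathcal{N}$ and the transitivity of $L_\kappa^{\mathcal{N}}$ inside $\mathcal{N}$.

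For the elementarity, the crucial step is to show $\langle (L_\kappa^{\mathcal{N}})^*, \in^{\mathcal{N}} \rangle \prec_1 \mathcal{N}$, and here I would imitate the transfer technique of Lemma \ref{Th:LKappaAdmissibleInExtension}. Let $\tau(z)$ be the sentence
\[
\forall \lambda ((z \in \lambda \land \lambda \textrm{ is a cardinal}) \Rightarrow L_\lambda \prec_1 \mathbb{V}).
\]
Since both ``$\lambda$ is a cardinal'' and ``$L_\lambda \prec_1 \mathbb{V}$'' are $\Pi_1^{\mathsf{KPI}}$, writing the inner implication $\forall a\, \delta_1 \Rightarrow \forall b\, \delta_2$ as $\forall b \exists a(\neg \delta_1 \lor \delta_2)$ shows it is $\Pi_2$, and the outer universal preserves $\Pi_2$; so $\tau(z)$ is $\Pi_2^{\mathsf{KPI}}$. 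Lemma \ref{Th:LKappaSigma1Elementary} yields $\mathcal{M} \models \tau(\omega)$; since $\mathcal{M} \prec_{e,2} \mathcal{N}$ transfers $\Pi_2$-sentences, $\mathcal{N} \models \tau(\omega)$. Applying this inside $\mathcal{N}$ to $\kappa$ (a cardinal in $\mathcal{N}$ with $\omega^{\mathcal{N}} = \omega^{\mathcal{M}} <^{\mathcal{N}} \kappa$), I obtain $\mathcal{N} \models L_\kappa \prec_1 \mathbb{V}$, which unpacks to $\Sigma_1$-elementarity of $\langle (L_\kappa^{\mathcal{N}})^*, \in^{\mathcal{N}} \rangle$ in $\mathcal{N}$ for parameters in $(L_\kappa^{\mathcal{N}})^*$ and standard G\"odel codes in $\omega$.

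With $\Sigma_1$-elementarity (hence $\Pi_1$-elementarity by complementation) of $(L_\kappa^{\mathcal{N}})^*$ in $\mathcal{N}$ in hand, the $\Sigma_2$-elementarity of $\mathcal{M}$ in $\langle (L_\kappa^{\mathcal{N}})^*, \in^{\mathcal{N}} \rangle$ follows by a standard witness-chase. Given $\phi(\vec{x}) = \exists y \theta(\vec{x}, y)$ with $\theta \in \Pi_1$ and $\vec{a} \in M$: if $\mathcal{M} \models \phi(\vec{a})$, pick a witness $b \in M$, note $b \in (L_\kappa^{\mathcal{N}})^*$ by the first step, lift $\theta(\vec{a}, b)$ from $\mathcal{M}$ to $\mathcal{N}$ via $\mathcal{M} \prec_1 \mathcal{N}$, and drop it to $(L_\kappa^{\mathcal{N}})^*$ by $\Pi_1$-elementarity. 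Conversely, if $\langle (L_\kappa^{\mathcal{N}})^*, \in^{\mathcal{N}} \rangle \models \phi(\vec{a})$, take a witness $b \in (L_\kappa^{\mathcal{N}})^*$, lift $\theta(\vec{a}, b)$ to $\mathcal{N}$ by $\Pi_1$-elementarity, and descend to $\mathcal{M}$ via $\mathcal{M} \prec_2 \mathcal{N}$. The main obstacle is verifying the complexity computation for $\tau(z)$; the rest follows by direct application of the elementarity already secured.
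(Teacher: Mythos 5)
Your proposal is correct and follows essentially the same route as the paper: the key step in both is to transfer the $\Pi_2^{\mathsf{KPI}}$ sentence asserting ``$L_\lambda \prec_1 \mathbb{V}$ for every cardinal $\lambda > \omega$'' (your $\tau$ is the paper's $\sigma_1$, given the paper's definition of $X \prec_1 \mathbb{V}$) from $\mathcal{M}$ to $\mathcal{N}$ using Lemma \ref{Th:LKappaSigma1Elementary} and $\Sigma_2$-elementarity, and then to finish with the same witness-chase through $\mathcal{M} \prec_{e,1} \langle (L_\kappa^{\mathcal{N}})^*, \in^{\mathcal{N}} \rangle \prec_{e,1} \mathcal{N}$. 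You spell out a few details the paper leaves implicit ($M \subseteq (L_\kappa^{\mathcal{N}})^*$, the complexity count for $\tau$, and both directions of the final $\Sigma_2$ transfer), but the argument is the same.
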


\begin{proof}
By Theorem \ref{Th:KPIInSigma2EndExtension}, $\mathcal{N} \models \mathsf{KPI}+\Sigma_1\textsf{-Foundation}+\mathsf{V=L}$. Let $\kappa \in \mathsf{Ord}^{\mathcal{N}} \backslash \mathsf{Ord}^{\mathcal{M}}$ be such that $\mathcal{N} \models (\kappa \textrm{ is a cardinal})$. Let $\sigma_1(z)$ be the formula
\[
\forall \lambda \left(\begin{array}{c}
(z \in \lambda \land \lambda \textrm{ is a cardinal}) \Rightarrow\\
(\forall k \in \omega)(\forall x \in L_\lambda)(\mathsf{Sat}_{\Sigma_1}(k, x) \Rightarrow \langle L_\lambda, \in \rangle \models \mathsf{Sat}_{\Sigma_1}(k, x))
\end{array} \right).
\]
Note that $\sigma_1(z)$ is $\Pi_2^{\mathsf{KPI}}$. Lemma \ref{Th:LKappaSigma1Elementary} shows that $\mathcal{M} \models \sigma_1(\omega)$. So, since $\mathcal{M} \prec_{e, 2} \mathcal{N}$ and $\mathcal{N} \models \mathsf{KPI}$, $\mathcal{N} \models \sigma_1(\omega)$. In particular,
\[
\mathcal{M} \prec_{e, 1} \langle (L_\kappa^{\mathcal{N}})^*, \in^{\mathcal{N}} \rangle \prec_{e, 1} \mathcal{N}.
\]
Now, let $\phi(x, \vec{z})$ be a $\Pi_1$-formula. Let $\vec{a} \in M$ and suppose that $\langle (L_\kappa^{\mathcal{N}})^*, \in^{\mathcal{N}} \rangle \models \exists x \phi(x, \vec{a})$. Let $b \in (L_\kappa^{\mathcal{N}})^*$ be such that $\langle (L_\kappa^{\mathcal{N}})^*, \in^{\mathcal{N}} \rangle \models \phi(b, \vec{a})$. Since $\langle (L_\kappa^{\mathcal{N}})^*, \in^{\mathcal{N}} \rangle \prec_{e, 1} \mathcal{N}$, $\mathcal{N} \models \phi(b, \vec{a})$. In particular, $\mathcal{N} \models \exists x \phi(x, \vec{a})$. Therefore, since $\mathcal{M} \prec_{e, 2} \mathcal{N}$, $\mathcal{M} \models \exists x \phi(x, \vec{a})$. This shows that $\mathcal{M} \prec_{e, 2} \langle (L_\kappa^{\mathcal{N}})^*, \in^{\mathcal{N}} \rangle$.
\Square
\end{proof}

This allows us to prove the base case ($n=1$) of the Main Theorem (Theorem \ref{Th:MainTheorem}).

\begin{theorem} \label{Th:BaseCaseMainTheorem}
Let $\mathcal{M}= \langle M, \in^{\mathcal{M}} \rangle$ be such that $\mathcal{M} \models \mathsf{M}+\Pi_1\textsf{-Collection}+\mathsf{V=L}$. If $\mathcal{N}= \langle N, \in^{\mathcal{N}} \rangle$ is such that $\mathcal{M} \prec_{e, 2} \mathcal{N}$, $\mathcal{N} \models \Delta_0\textsf{-Collection}$ and $\mathsf{Ord}^{\mathcal{N}} \backslash \mathsf{Ord}^{\mathcal{M}}$ is nonempty and contains no least new element, then $\mathcal{M} \models \Pi_2\textsf{-Collection}$.
\end{theorem}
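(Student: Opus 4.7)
The plan is to apply Theorem \ref{Th:SimpsonResult} with $n = 2$, but not to $\mathcal{N}$ itself: since $\mathcal{N}$ is only guaranteed to satisfy $\Delta_0\textsf{-Collection}$ and, by Theorem \ref{Th:KPIInSigma2EndExtension}, $\Sigma_1\textsf{-Foundation}$, neither the $\Pi_1\textsf{-Collection}$ alternative nor the $\Pi_4\textsf{-Foundation}$ alternative of Theorem \ref{Th:SimpsonResult} is immediately available in $\mathcal{N}$. Instead I would cut $\mathcal{N}$ off at a new cardinal to obtain a richer intermediate structure $\mathcal{N}'$. Before that I would first check that $\mathcal{M}$ lies in the scope of Theorem \ref{Th:SimpsonResult}: since every $\Pi_1$-formula is trivially both $\Sigma_2$ and $\Pi_1$, hence $\Delta_2^{\mathcal{M}}$, the $\Delta_{n+1}\textsf{-Separation}$ scheme provable in $\mathsf{M}^- + \Pi_1\textsf{-Collection}$ supplies $\Pi_1\textsf{-Separation}$ and hence $\Pi_1\textsf{-Foundation}$ in $\mathcal{M}$, so $\mathcal{M} \models \mathsf{KP} + \mathsf{V=L}$.

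I would then invoke Lemma \ref{Th:NewCardinalInExtension} to produce a cardinal $\kappa \in \mathsf{Ord}^{\mathcal{N}} \backslash \mathsf{Ord}^{\mathcal{M}}$ and set $\mathcal{N}' := \langle (L_\kappa^{\mathcal{N}})^*, \in^{\mathcal{N}} \rangle$. Lemma \ref{Th:LKappaAdmissibleInExtension} yields $\mathcal{N}' \models \mathsf{KPI} + \mathsf{Separation} + \mathsf{V=L}$, and Lemma \ref{Th:ElementarityOfLKappaInExtension} yields $\mathcal{M} \prec_{e, 2} \mathcal{N}'$. Since $\Gamma\textsf{-Separation}$ implies $\Gamma\textsf{-Foundation}$ over $\mathsf{M}^-$, full \textsf{Separation} in $\mathcal{N}'$ in particular gives $\Pi_4\textsf{-Foundation}$ in $\mathcal{N}'$, unlocking the $\Pi_{n+2}\textsf{-Foundation}$ clause of Theorem \ref{Th:SimpsonResult} at $n=2$.

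Finally, I would verify that $\mathsf{Ord}^{\mathcal{N}'} \backslash \mathsf{Ord}^{\mathcal{M}}$ is nonempty with no least element: because $\kappa$ lies in $\mathsf{Ord}^{\mathcal{N}} \backslash \mathsf{Ord}^{\mathcal{M}}$ and the latter has no least element by hypothesis, there is a new ordinal of $\mathcal{N}$ strictly below $\kappa$, which belongs to $\mathcal{N}'$, and iterating this argument produces a strictly smaller new ordinal inside $\mathcal{N}'$ below any given one. Theorem \ref{Th:SimpsonResult} applied to $\mathcal{M} \prec_{e, 2} \mathcal{N}'$ with $n = 2$ then concludes $\mathcal{M} \models \Pi_2\textsf{-Collection}$. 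The main obstacle I anticipate is essentially the one already handled by the preparatory Lemmas \ref{Th:NewCardinalInExtension}, \ref{Th:LKappaAdmissibleInExtension} and \ref{Th:ElementarityOfLKappaInExtension}: arranging that $\mathcal{N}'$, carved out of an end extension only known to satisfy $\Delta_0\textsf{-Collection}$, carries enough theory (\textsf{Separation}, hence high-complexity \textsf{Foundation}) to feed Simpson's theorem, while still being a $\Sigma_2$-elementary end extension of $\mathcal{M}$ with a copy of the trailing part of $\mathsf{Ord}^{\mathcal{N}} \backslash \mathsf{Ord}^{\mathcal{M}}$ inside it.
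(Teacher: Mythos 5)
Your proposal is correct and follows essentially the same route as the paper's own proof: cut $\mathcal{N}$ at a new cardinal supplied by Lemma \ref{Th:NewCardinalInExtension}, use Lemmas \ref{Th:LKappaAdmissibleInExtension} and \ref{Th:ElementarityOfLKappaInExtension} to see that the cut is a $\Sigma_2$-elementary end extension of $\mathcal{M}$ satisfying $\mathsf{KPI}+\mathsf{Separation}$ with new ordinals but no least one, and then apply Theorem \ref{Th:SimpsonResult}. You are merely more explicit than the paper about which clause of Theorem \ref{Th:SimpsonResult} is being invoked and about the routine verifications ($\mathcal{M}\models\mathsf{KP}$, the tail of new ordinals surviving inside the cut), all of which check out.
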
 

\begin{proof}
Let $\mathcal{N}= \langle N, \in^{\mathcal{N}} \rangle$ is such that $\mathcal{M} \prec_{e, 2} \mathcal{N}$, $\mathcal{N} \models \Delta_0\textsf{-Collection}$ and $\mathsf{Ord}^{\mathcal{N}} \backslash \mathsf{Ord}^{\mathcal{M}}$ is nonempty and contains no least new element. By Theorem \ref{Th:KPIInSigma2EndExtension}, $\mathcal{N} \models \mathsf{KPI}+\Sigma_1\textsf{-Foundation}+\mathsf{V=L}$. Using Lemma \ref{Th:NewCardinalInExtension}, let $\kappa \in \mathsf{Ord}^{\mathcal{N}} \backslash \mathsf{Ord}^{\mathcal{M}}$ be such that $\mathcal{N} \models (\kappa \textrm{ is a cardinal})$. Therefore, by Lemma \ref{Th:ElementarityOfLKappaInExtension}, $\mathcal{M} \prec_{e, 2} \langle (L_\kappa^{\mathcal{N}})^*, \in^{\mathcal{N}} \rangle$.  Moreover, $\mathsf{Ord}^{\langle (L_\kappa^{\mathcal{N}})^*, \in^{\mathcal{N}} \rangle} \backslash \mathsf{Ord}^{\mathcal{M}}$ is nonempty and contains no least element and, by Lemma \ref{Th:LKappaAdmissibleInExtension}, $\langle (L_\kappa^{\mathcal{N}})^*, \in^{\mathcal{N}} \rangle \models \mathsf{KPI}+\textsf{Separation}$. Therefore, by Theorem \ref{Th:SimpsonResult}, $\mathcal{M} \models \Pi_2\textsf{-Collection}$.    
\Square
\end{proof}

Using this result we show that the minimum model of $\mathsf{M}+\Pi_1\textsf{-Collection}$ has no proper $\Sigma_2$-elementary end extension that satisfies $\Delta_0\textsf{-Collection}$.

\begin{theorem} \label{Th:CounterexampleKaufmannQuestion}
Let $\alpha \in \omega_1$ be such that $L_\alpha$ is the minimum model of $\mathsf{M}+\Pi_1\textsf{-Collection}$. There is no $\mathcal{N}= \langle N, \in^{\mathcal{N}} \rangle$ such that $\langle L_\alpha, \in \rangle \prec_{e, 2} \mathcal{N}$, $L_\alpha \neq N$ and $\mathcal{N} \models \Delta_0\textsf{-Collection}$. 
\end{theorem}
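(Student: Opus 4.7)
The plan is to assume for contradiction that such an $\mathcal{N}$ exists and then invoke Theorem~\ref{Th:BaseCaseMainTheorem} to obtain $\langle L_\alpha, \in\rangle \models \Pi_2\textsf{-Collection}$, directly contradicting Corollary~\ref{Th:SeparationOfMinimumModels}. The work lies in verifying the hypotheses of Theorem~\ref{Th:BaseCaseMainTheorem}, namely that $\mathsf{Ord}^{\mathcal{N}} \setminus \mathsf{Ord}^{L_\alpha}$ is nonempty and has no least element. Nonemptiness is essentially immediate: by $\Sigma_2$-elementarity, $\mathcal{N} \models \mathsf{V=L}$, so any new element of $N$ must lie in $L_\gamma^{\mathcal{N}}$ for some $\gamma \in \mathsf{Ord}^{\mathcal{N}}$, and by end-extension together with absoluteness of $L_\gamma$ at old $\gamma$, such a $\gamma$ must itself be new.

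The main obstacle is ruling out a least new ordinal. Suppose for contradiction $\beta_0$ is the least new ordinal in $\mathcal{N}$. Since $L_\alpha$ is closed under successor, $\beta_0$ is a limit, and the ordinals of $\mathcal{N}$ below $\beta_0$ are precisely the ordinals of $L_\alpha$. Using that $\mathcal{N} \models \mathsf{KPI}+\mathsf{V=L}$ (Theorem~\ref{Th:KPIInSigma2EndExtension}) together with the absoluteness of the constructible hierarchy at old ordinals, the internal structure $\langle L_{\beta_0}^{\mathcal{N}}, \in^{\mathcal{N}}\rangle$ is isomorphic, via the identity map, to the external structure $\langle L_\alpha, \in\rangle$. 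Consequently, $\mathcal{N}$'s satisfaction predicate $\mathbin{\models}$ applied to $L_{\beta_0}^{\mathcal{N}}$ agrees axiom by axiom with the external satisfaction at $L_\alpha$. Because $\mathbin{\models}$ is $\Delta_1^{\mathsf{KP}}$, $\omega^{\mathcal{N}} = \omega$, and the axiomatisation of $T := \mathsf{M}+\Pi_1\textsf{-Collection}+\mathsf{V=L}$ is recursive, this per-axiom agreement assembles into the uniform internal assertion
\[
\mathcal{N} \models \Theta, \qquad \Theta \;:=\; \exists u \bigl(u \text{ is transitive} \land \langle u, \in\rangle \models T\bigr),
\]
witnessed by $L_{\beta_0}^{\mathcal{N}}$.

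The sentence $\Theta$ is $\Sigma_1^{\mathsf{KP}}$ (using closure of $\Delta_1$ under bounded quantification over $\omega$), so by $\Sigma_2$-elementarity, $\langle L_\alpha, \in\rangle \models \Theta$. But this contradicts the minimality of $L_\alpha$: any transitive $u \in L_\alpha$ modelling $T$ would, by the minimum-model property, satisfy $L_\alpha \subseteq u$, forcing $L_\alpha = u \in L_\alpha$, absurd. Hence $\mathsf{Ord}^{\mathcal{N}} \setminus \mathsf{Ord}^{L_\alpha}$ has no least element, so Theorem~\ref{Th:BaseCaseMainTheorem} applies and yields $\langle L_\alpha, \in\rangle \models \Pi_2\textsf{-Collection}$, contradicting Corollary~\ref{Th:SeparationOfMinimumModels}. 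The most delicate point in the argument is the passage from per-axiom agreement of the internal and external satisfaction on $L_\alpha$ to the uniform $\Sigma_1^{\mathsf{KP}}$ internal witness for $\Theta$; this rests on the $\Delta_1^{\mathsf{KP}}$-definability of $\mathbin{\models}$, the standardness of $\omega^{\mathcal{N}}$, and the recursive axiomatisation of $T$.
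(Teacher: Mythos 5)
Your proposal is correct and follows essentially the same route as the paper's proof: assume a least new ordinal $\gamma$, observe that $(L_\gamma^{\mathcal{N}})^*=L_\alpha$ so that $\mathcal{N}$ satisfies the $\Sigma_1$ sentence asserting the existence of a transitive model of $\mathsf{M}+\Pi_1\textsf{-Collection}$, reflect this down to $\langle L_\alpha,\in\rangle$ to contradict minimality, and then apply Theorem \ref{Th:BaseCaseMainTheorem} together with Corollary \ref{Th:SeparationOfMinimumModels}. The extra detail you supply about assembling per-axiom agreement into the uniform internal witness is exactly the content the paper leaves implicit in its use of the $\Delta_1^{\mathsf{KP}}$ satisfaction predicate.
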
 

\begin{proof}
Suppose, for a contradiction, that $\mathcal{N}= \langle N, \in^{\mathcal{N}} \rangle$ is such that $\langle L_\alpha, \in \rangle \prec_{e, 2} \mathcal{N}$, $N \neq L_\alpha$ and $\mathcal{N} \models \Delta_0\textsf{-Collection}$. By Theorem \ref{Th:KPIInSigma2EndExtension},
\[
\mathcal{N} \models \mathsf{KPI}+\Sigma_1\textsf{-Foundation}+\mathsf{V=L}.
\]
Since $N \neq L_\alpha$, $\mathrm{Ord}^{\mathcal{N}} \backslash \mathsf{Ord}^{\langle L_\alpha, \in \rangle}$ is nonempty. Suppose, for a contradiction, that $\gamma$ is the least element of $\mathrm{Ord}^{\mathcal{N}} \backslash \mathsf{Ord}^{\langle L_\alpha, \in \rangle}$. Therefore, $(L_\gamma^{\mathcal{N}})^*= L_\alpha$, and 
\[
\mathcal{N} \models \exists x((x \textrm{ is transitive}) \land (\langle x, \in \rangle \models \mathsf{M}+\Pi_1\textsf{-Collection})).
\]
Since $\langle L_\alpha, \in \rangle \prec_{e, 1} \mathcal{N}$,
\[
\langle L_\alpha, \in \rangle \models \exists x((x \textrm{ is transitive}) \land (\langle x, \in \rangle \models \mathsf{M}+\Pi_1\textsf{-Collection})),
\]
but this contradicts the fact that $L_\alpha$ is the minimum model of $\mathsf{M}+\Pi_1\textsf{-Collection}$. Therefore, $\mathrm{Ord}^{\mathcal{N}} \backslash \mathsf{Ord}^{\langle L_\alpha, \in \rangle}$ is nonempty and has no least element. So, by Theorem \ref{Th:BaseCaseMainTheorem}, $\langle L_\alpha, \in \rangle \models \Pi_2\textsf{-Collection}$, which contradicts Corollary \ref{Th:SeparationOfMinimumModels}.
\Square
\end{proof}

This result shows that complexity of the scheme $\Delta_0\textsf{-Collection}$ computed by Lemma \ref{Th:ComplexityOfCollection} is best possible.

\begin{corollary}
Let $\alpha \in \omega_1$ be such that $L_\alpha$ is the minimum model of $\mathsf{M}+\Pi_1\textsf{-Collection}$. There is no collection of $\Sigma_3$-sentences, $\Gamma$, such that
\[
\langle L_\alpha, \in \rangle \models \Gamma \textrm{ and } \\mathsf{M}^-+\Gamma \vdash \Delta_0\textsf{-Collection}.  
\] 
\Square
\end{corollary}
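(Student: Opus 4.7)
The plan is to argue by contradiction and reduce to Theorem \ref{Th:CounterexampleKaufmannQuestion}. Suppose, toward a contradiction, that $\Gamma$ is a collection of $\Sigma_3$-sentences such that $\langle L_\alpha, \in \rangle \models \Gamma$ and $\mathsf{M}^- + \Gamma \vdash \Delta_0\textsf{-Collection}$. Since $\alpha \in \omega_1$ is countable and $\langle L_\alpha, \in \rangle \models \mathsf{M}^- + \Pi_1\textsf{-Collection}$, the $(II) \Rightarrow (I)$ direction of Theorem \ref{Th:KaufmannTheorem} (applied with $n = 1$) produces a proper $\Sigma_2$-elementary end extension $\mathcal{N}$ of $\langle L_\alpha, \in \rangle$.

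Next I would verify that $\mathcal{N} \models \mathsf{M}^- + \Gamma$. Because $\mathsf{M}^-$ is axiomatised by $\Pi_2$-sentences and $\langle L_\alpha, \in \rangle \prec_2 \mathcal{N}$, $\Pi_2$-absoluteness immediately gives $\mathcal{N} \models \mathsf{M}^-$. For $\Gamma$, the key observation is that $\Sigma_3$-sentences transfer upward across $\Sigma_2$-elementary end extensions: if $\sigma \equiv \exists \vec{x}\, \psi(\vec{x})$ with $\psi \in \Pi_2$ and $\langle L_\alpha, \in \rangle \models \sigma$, then any witness $\vec{a} \in L_\alpha$ satisfies $\langle L_\alpha, \in \rangle \models \psi(\vec{a})$, and $\Sigma_2$-elementarity transports $\psi(\vec{a})$ to $\mathcal{N}$, whence $\mathcal{N} \models \sigma$. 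Applying this to each sentence of $\Gamma$ yields $\mathcal{N} \models \Gamma$, and then the derivation assumption on $\Gamma$ forces $\mathcal{N} \models \Delta_0\textsf{-Collection}$.

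This makes $\mathcal{N}$ a proper $\Sigma_2$-elementary end extension of $\langle L_\alpha, \in \rangle$ satisfying $\Delta_0\textsf{-Collection}$, directly contradicting Theorem \ref{Th:CounterexampleKaufmannQuestion}. I do not anticipate any serious obstacle: all the real work is already encapsulated in Theorems \ref{Th:KaufmannTheorem} and \ref{Th:CounterexampleKaufmannQuestion}, and the only new ingredient is the upward transfer of $\Sigma_3$ across $\prec_{e, 2}$, which is a routine consequence of picking a witness in the smaller structure. The one subtlety to keep in mind is that the countable direction of Theorem \ref{Th:KaufmannTheorem} provides the proper $\Sigma_2$-elementary end extension without specifying its theory, so it is essential that $\mathcal{N}$ acquires $\mathsf{M}^- + \Gamma$ purely by transfer from $\langle L_\alpha, \in \rangle$ rather than being assumed to satisfy any ambient theory.
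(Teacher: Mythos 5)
Your proof is correct and is exactly the argument the paper intends (the corollary is stated with only a \Square, but it is derived from Theorem \ref{Th:CounterexampleKaufmannQuestion} in precisely this way): use the countable direction of Theorem \ref{Th:KaufmannTheorem} to get a proper $\Sigma_2$-elementary end extension, push the $\Pi_2$-axiomatised $\mathsf{M}^-$ and the $\Sigma_3$-sentences of $\Gamma$ up through $\prec_{e,2}$, and contradict Theorem \ref{Th:CounterexampleKaufmannQuestion}. Your closing caveat --- that $\mathcal{N}$ must acquire its theory by transfer rather than by assumption --- is exactly the right point to flag.
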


\section[$\Sigma_{n+1}$-elementary end extensions]{$\Sigma_{n+1}$-elementary end extensions that satisfy $\Pi_{n-1}$-Collection} \label{Sec:GeneralCase}

Here we generalise the results of the preceding section to prove that for $n > 1$, if $\mathcal{M}$ satisfies $\mathsf{M}+\Pi_n\textsf{-Collection}+\mathsf{V=L}$ and $\mathcal{N}$ is such that $\mathcal{M} \prec_{e, n+1} \mathcal{N}$, $\mathcal{N}$ satisfies $\Pi_{n-1}\textsf{-Collection}$ and $\mathsf{Ord}^{\mathcal{N}} \backslash \mathsf{Ord}^{\mathcal{M}}$ is nonempty and contains no least element, then $\Pi_{n+1}\textsf{-Collection}$ holds in $\mathcal{M}$. This allows us to show that for all $n > 1$, the minimal model of $\mathsf{M}+\Pi_n\textsf{-Collection}$ has no proper $\Sigma_{n+1}$-elementary end extension that satisfies $\Pi_{n-1}\textsf{-Collection}$.

We begin by showing that if $\mathcal{M}$ satisfies $\mathsf{M}+\Pi_n\textsf{-Collection}+\mathsf{V=L}$ and $\mathcal{N}$ is a proper extension of $\mathcal{M}$ such that $\mathcal{M} \prec_{e, n+1} \mathcal{N}$ and $\mathcal{N}$ satisfies $\Pi_{n-1}\textsf{-Collection}$, then $\Sigma_{n+1}\textsf{-Separation}$ must hold in $\mathcal{M}$.

\begin{theorem} \label{Th:SeparationInBaseModel}
Let $n \in \omega$ with $n \geq 1$. Let $\mathcal{M}= \langle M, \in^{\mathcal{M}} \rangle$ be such that $\mathcal{M} \models \mathsf{M}+\Pi_n\textsf{-Collection}+\mathsf{V=L}$. If $\mathcal{N}= \langle N, \in^{\mathcal{N}} \rangle$ is such that $\mathcal{M} \prec_{e, n+1} \mathcal{N}$, $M \neq N$ and $\mathcal{N} \models \Pi_{n-1}\textsf{-Collection}$, then $\mathcal{M} \models \Sigma_{n+1}\textsf{-Separation}$.    
\end{theorem}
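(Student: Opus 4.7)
The plan is to fix a $\Sigma_{n+1}$-formula $\phi(x, \vec{z})$ and parameters $\vec{a}, w \in M$, show that the intended set $B = \{x \in w : \mathcal{M} \models \phi(x, \vec{a})\}$ first lies in $N$, and then use powerset-preservation to conclude $B \in M$. Write $\phi$ as $\exists y\, \psi(y, x, \vec{z})$ with $\psi \in \Pi_n$. Since $\mathsf{V=L}$ is $\Pi_2$, it transfers from $\mathcal{M}$ to $\mathcal{N}$; this, together with $M \neq N$ and the fact that $\mathsf{Ord}^{\mathcal{M}}$ is an initial segment of $\mathsf{Ord}^{\mathcal{N}}$, yields some $\beta \in \mathsf{Ord}^{\mathcal{N}} \setminus \mathsf{Ord}^{\mathcal{M}}$ above every ordinal of $\mathcal{M}$, so that $M \subseteq (L_\beta^{\mathcal{N}})^*$.

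The key equivalence I would then establish is that for each $x \in w$, $\mathcal{M} \models \phi(x, \vec{a})$ if and only if $\mathcal{N} \models (\exists y \in L_\beta)\, \psi(y, x, \vec{a})$. For the forward direction, any witness $y \in M$ for $\mathcal{M} \models \phi(x, \vec{a})$ lies in $L_\beta^{\mathcal{N}}$ and, since $\psi \in \Pi_n \subseteq \Sigma_{n+1}$, remains a witness in $\mathcal{N}$ by $\Sigma_{n+1}$-elementarity; conversely the bounded witness is unbounded in $\mathcal{N}$, and $\Sigma_{n+1}$-elementarity returns $\phi(x, \vec{a})$ to $\mathcal{M}$. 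Moreover, the formula $(\exists y \in L_\beta)\, \psi$ is $\Pi_n^{\mathcal{N}}$-equivalent via closure of $\Pi_n$ under bounded quantification (Proposition 2.4 of \cite{flw16}, applicable because $\mathcal{N} \models \Pi_{n-1}$-Collection).

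To place $B$ in $N$ the argument bifurcates on $n$. For $n \geq 2$, Theorem \ref{Th:TheoryInEndExtensionWithCollection} yields $\Sigma_n$-Separation in $\mathcal{N}$, which by complementation is equivalent to $\Pi_n$-Separation, so the $\Pi_n^{\mathcal{N}}$-definition of $B$ lands it in $N$ directly. For $n = 1$ there is no such separation available in $\mathcal{N}$, so I would instead route through the admissible segment built in Section \ref{Sec:BaseCase}: Lemma \ref{Th:NewCardinalInExtension} furnishes a new cardinal $\kappa$; Lemma \ref{Th:LKappaAdmissibleInExtension} gives $\langle (L_\kappa^{\mathcal{N}})^*, \in^{\mathcal{N}} \rangle \models \mathsf{KPI}+\mathsf{Separation}+\mathsf{V=L}$; and Lemma \ref{Th:ElementarityOfLKappaInExtension} gives $\mathcal{M} \prec_{e, 2} \langle (L_\kappa^{\mathcal{N}})^*, \in^{\mathcal{N}} \rangle$, so full Separation inside $L_\kappa^{\mathcal{N}}$ places $B$ in $(L_\kappa^{\mathcal{N}})^* \subseteq N$.

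In either case $B \in N$ and $B \subseteq w \in M$, so Lemma \ref{Th:ElementarityImpliesPowersetPreserving} (which gives $\mathcal{M} \subseteq_e^{\mathcal{P}} \mathcal{N}$) yields $B \in M$, establishing the desired instance of $\Sigma_{n+1}$-Separation. The main obstacle is the base case $n = 1$: because $\mathcal{N}$ satisfies only $\Delta_0$-Collection, one cannot directly separate the $\Pi_1$-formula in $\mathcal{N}$, and must detour through $L_\kappa^{\mathcal{N}}$ using the three lemmas from the previous section.
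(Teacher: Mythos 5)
Your proposal is correct, but it takes a genuinely different route from the paper. The paper does not build the separating set in $\mathcal{N}$ at all: it reduces $\Sigma_{n+1}\textsf{-Separation}$ in $\mathcal{M}$ to $\Pi_{n+1}\textsf{-Foundation}$ in $\mathcal{M}$ via Theorem \ref{Th:FoundationImpliesSeparation} (this is where \textsf{Powerset} enters), and then proves $\Pi_{n+1}\textsf{-Foundation}$ by replacing $\forall y\,\phi(y,x,\vec{a})$ with $(\forall y \in L_\kappa)\phi(y,x,\vec{a})$ for a new cardinal $\kappa$ --- the same bounding trick as your key equivalence, but in the dual direction --- and applying $\Sigma_n\textsf{-Foundation}$ in $\mathcal{N}$ (available uniformly for all $n\geq 1$ from Theorems \ref{Th:KPIInSigma2EndExtension} and \ref{Th:TheoryInEndExtensionWithCollection}) to extract an $\in$-minimal element. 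That argument is uniform in $n$ and never needs to locate the separating set as an object of $N$. Your version instead constructs the set inside the extension ($\Sigma_n\textsf{-Separation}$ in $\mathcal{N}$ for $n\geq 2$, full \textsf{Separation} in $L_\kappa^{\mathcal{N}}$ for $n=1$) and pulls it into $M$ by powerset-preservation, so \textsf{Powerset} enters through Lemma \ref{Th:ElementarityImpliesPowersetPreserving} rather than through Theorem \ref{Th:FoundationImpliesSeparation}; the price is the case split at $n=1$. One small wrinkle in your $n=1$ branch: the theorem does not assume there is no least new ordinal, so $\kappa$ may itself be the least new ordinal and there need not be a new $\beta<\kappa$ to serve as a bound inside $L_\kappa^{\mathcal{N}}$. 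This is harmless --- since $\mathcal{M}\prec_{e,2}\langle (L_\kappa^{\mathcal{N}})^*,\in^{\mathcal{N}}\rangle$ you should simply separate the unbounded $\Sigma_2$-formula $\exists y\,\psi(y,x,\vec{a})$ over $L_\kappa^{\mathcal{N}}$ using its full \textsf{Separation}, rather than the $L_\beta$-bounded form --- but the write-up should say so.
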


\begin{proof}
Let $\mathcal{N}= \langle N, \in^{\mathcal{N}} \rangle$ be such that $\mathcal{M} \prec_{e, n+1} \mathcal{N}$, $M \neq N$ and $\mathcal{N} \models \Pi_{n-1}\textsf{-Collection}$. By Theorems \ref{Th:KPIInSigma2EndExtension} and \ref{Th:TheoryInEndExtensionWithCollection}, $\mathcal{N} \models \mathsf{KPI}+\Pi_n \cup \Sigma_n\textsf{-Foundation}+\mathsf{V=L}$. Using Lemma \ref{Th:NewCardinalInExtension}, let $\kappa \in \mathsf{Ord}^{\mathcal{N}} \backslash \mathsf{Ord}^{\mathcal{M}}$ be such that $\mathcal{N} \models (\kappa \textrm{ is a cardinal})$. Note that, by Theorem \ref{Th:FoundationImpliesSeparation}, we need to show that $\Pi_{n+1}\textsf{-Foundation}$ holds in $\mathcal{M}$. Towards this end, let $\phi(y, x, \vec{z})$ be a $\Sigma_n$-formula and let $\vec{a} \in M$. Using the fact that $\mathcal{M} \prec_{e, n+1} \mathcal{N}$, for all $x \in M$,
\begin{equation} \label{eq:eq1}
\mathcal{M} \models \forall y \phi(y, x, \vec{a}) \textrm{ if and only if } \mathcal{N} \models (\forall y \in L_\kappa) \phi(y, x, \vec{a}).
\end{equation}
Now, the formula $(\forall y \in L_\kappa) \phi(y, x, \vec{z})$ is equivalent to a $\Sigma_n$-formula in $\mathcal{N}$. Suppose that $b \in M$ is an element of the class $A= \{ x \in M \mid \mathcal{M} \models \forall y \phi(y, x, \vec{a})\}$ of $\mathcal{M}$. Consider the class $B= \{ x \in N \mid \mathcal{N} \models (x \in b) \land(\forall y \in L_\kappa) \phi(y, x, \vec{a})\}$. By (\ref{eq:eq1}), if $B$ is empty, then $b$ is an $\in$-minimal element of $A$. Otherwise, (\ref{eq:eq1}) implies that any $\in$-minimal element of $B$ is also an $\in$-minimal element of $A$, and $\Sigma_n\textsf{-Foundation}$ in $\mathcal{N}$ ensures that $B$ must have an $\in$-minimal element. This shows that $\Pi_{n+1}\textsf{-Foundation}$ and thus $\Sigma_{n+1}\textsf{-Separation}$ holds in $\mathcal{M}$.
\Square  
\end{proof}

\begin{remark}
By Theorem \ref{Th:CollectionDoesNotProveFoundation}, there exists $\mathcal{M}= \langle M, \in^{\mathcal{M}}\rangle$ such that $\mathcal{M} \models \mathsf{M}+\Pi_n\textsf{-Collection}+\mathsf{V=L}$ and $\Pi_{n+1}\textsf{-Foundation}$ fails in $\mathcal{M}$. Theorem \ref{Th:SeparationInBaseModel} shows that such an $\mathcal{M}$ does not have a proper $\Sigma_{n+1}$-elementary end extension that satisfies $\Pi_{n-1}\textsf{-Collection}$. Therefore, Theorem \ref{Th:SeparationInBaseModel} already shows that there exists nonstandard models of $\mathsf{M}+\Pi_n\textsf{-Collection}$ that have no proper $\Sigma_{n+1}$-elementary end extension satisfying $\Pi_{n-1}\textsf{-Collection}$.   
\end{remark}

The following is a weakening of \cite[Corollary 1.27]{res87}, which is attributed to R. Pino, obtained by noting that, in the theory $\mathsf{KPI}$, $\Pi_{n+1}\textsf{-Foundation}$ implies principle of induction on the natural numbers for $\Sigma_{n+1}$-formulae.

\begin{lemma} \label{Th:ArbitrarilyLargeReflectingL}
(Pino) Let $n \in \omega$ with $n \geq 1$. The theory $\mathsf{KPI}+\Pi_n\textsf{-Collection}+\Pi_{n+1}\textsf{-Foundation}+\mathsf{V=L}$ proves that for all ordinals $\alpha$, there exists an ordinal $\beta > \alpha$ such that $L_\beta \prec_n \mathbb{V}$. \Square
\end{lemma}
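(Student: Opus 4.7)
The plan is to use a classical L\'{e}vy-style $\Sigma_n$-reflection construction, building an $\omega$-chain of ordinals each of which reflects $\Sigma_n$-witnesses for parameters from the previous stage, and then taking $\beta$ to be the supremum. Given an ordinal $\gamma$, I want a limit ordinal $F(\gamma) > \gamma$ such that every $\Sigma_n$-existential claim with parameters in $L_\gamma$ that holds in $\mathbb{V}$ is already witnessed in $L_{F(\gamma)}$; iterating $F$ starting above $\alpha$ yields a cofinal sequence whose supremum will be the required $\beta$.

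To produce $F(\gamma)$, work with the $\Sigma_n^{\mathsf{KP}}$-satisfaction predicate $\mathsf{Sat}_{\Sigma_n}$. The class $A_\gamma = \{(\vec{a},m) \in L_\gamma \times \omega : \exists y\, \mathsf{Sat}_{\Sigma_n}(m,(\vec{a},y))\}$ is $\Sigma_n$-definable, hence $\Delta_{n+1}$, and is therefore a set by the $\Delta_{n+1}\textsf{-Separation}$ that, as the paper recalls from \cite[Theorem 4.13]{flw16}, follows from $\Pi_n\textsf{-Collection}$. Every pair in $A_\gamma$ admits a witness by definition, so $\Sigma_n\textsf{-Collection}$---available via the equivalence of $\Pi_n\textsf{-Collection}$ with $\Sigma_{n+1}\textsf{-Collection}$ over $\mathsf{M}^-$---applied to $A_\gamma$ supplies a single set $c$ containing a witness for every pair in $A_\gamma$. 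Using $\mathsf{V}=\mathsf{L}$, take $F(\gamma)$ to be the least limit ordinal above $\gamma$ with $c \in L_{F(\gamma)}$.

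Next, set $\alpha_0 = \alpha+1$ and $\alpha_{k+1} = F(\alpha_k)$ by recursion on $\omega$, and put $\beta = \sup_{k \in \omega} \alpha_k$. The iteration needs an appropriate induction principle on $\omega$; $F$ is definable at a bounded complexity and the $\omega$-sequence $\langle \alpha_k : k \in \omega \rangle$ can be constructed using $\Sigma_{n+1}$-induction on $\omega$, which by the observation recalled just before the statement of the lemma is a consequence of $\Pi_{n+1}\textsf{-Foundation}$. To verify $L_\beta \prec_n \mathbb{V}$, argue by induction on $k \leq n$ that $L_\beta$ reflects $\Sigma_k$- and $\Pi_k$-formulas. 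The base $k=0$ is absoluteness for the transitive class $L_\beta$; for the induction step, given $\vec{a} \in L_\beta$ and $\psi$ a $\Pi_k$-formula with $\mathbb{V} \models \exists y\, \psi(y,\vec{a})$, pick $j$ with $\vec{a} \in L_{\alpha_j}$ and use the construction of $\alpha_{j+1} = F(\alpha_j)$ to produce a witness $y \in L_{\alpha_{j+1}} \subseteq L_\beta$, and the inductive hypothesis then transfers $\mathbb{V}$-truth of $\psi(y,\vec{a})$ to $L_\beta$-truth.

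The main obstacle is the careful bookkeeping of syntactic complexity in the reflection step. One must make sure both that $A_\gamma$ is definable at a complexity covered by the available separation and that the collection extracting the witness set $c$ falls within $\Pi_n\textsf{-Collection}$. Isolating the $\Sigma_n$-definable subset $A_\gamma$ of pairs for which a witness actually exists is what allows us to bypass the awkward conditional ``if a witness exists then one exists in $L_{F(\gamma)}$'' and appeal directly to plain collection. Once this is set up, iterating over $\omega$ and verifying elementarity by the Tarski--Vaught criterion are routine.
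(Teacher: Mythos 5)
The paper does not actually prove this lemma: it is quoted as a weakening of \cite[Corollary 1.27]{res87} (attributed to Pino), with only the remark that $\Pi_{n+1}\textsf{-Foundation}$ supplies $\Sigma_{n+1}$-induction on $\omega$. Your argument is the standard L\'{e}vy--Montague reflection proof, and it is correct; moreover it puts its finger on exactly the points where the weak base theory matters. In particular: (i) forming the set $A_\gamma$ of pairs that actually have witnesses via $\Delta_{n+1}\textsf{-Separation}$ (a $\Sigma_n$ formula being trivially $\Delta_{n+1}$) so that plain $\Sigma_n\textsf{-Collection}$ applies, rather than collecting against a disjunction with a $\Pi_n$ ``no witness exists'' clause; (ii) using $\Sigma_{n+1}$-induction on $\omega$ --- which is where $\Pi_{n+1}\textsf{-Foundation}$ enters, exactly as the paper's preamble to the lemma indicates --- to run the recursion $\alpha_{k+1}=F(\alpha_k)$, together with $\Pi_n\textsf{-Collection}$ ($\equiv \Sigma_{n+1}\textsf{-Collection}$) to gather the $\alpha_k$ into a set before taking the supremum; and (iii) a Tarski--Vaught verification in which the induction on $k\leq n$ is a finite meta-induction carried out uniformly over all internal codes via the $\mathsf{Sat}$ predicates, with $\Delta_0$-absoluteness as the base case. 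Two cosmetic points you should tidy up: $\alpha_0=\alpha+1$ should be replaced by a limit ordinal above $\alpha$ so that $L_{\alpha_0}$ is closed under tupling of parameters; and since collection does not produce a canonical witness set $c$, the function $F$ should be defined directly as the least limit $\delta>\gamma$ such that every pair in $A_\gamma$ has a witness in $L_\delta$ (existence of such $\delta$ following from your $c$ together with $\mathsf{V=L}$), so that functionality of $F$ is immediate and the graph of $F$ has the $\Delta_{n+1}$ complexity needed for the induction. Neither point is a genuine gap.
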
 

This allows us to prove the Main Theorem in the cases where $n \geq 2$.

\begin{theorem} \label{Th:GeneralCaseOfMainTheorem}
Let $n \in \omega$ with $n \geq 2$. Let $\mathcal{M}= \langle M, \in^{\mathcal{M}} \rangle$ be such that $\mathcal{M} \models \mathsf{M}+\Pi_n\textsf{-Collection}+\mathsf{V=L}$. If $\mathcal{N}= \langle N, \in^{\mathcal{N}} \rangle$ is such that $\mathcal{M} \prec_{e, n+1} \mathcal{N}$, $\mathcal{N} \models \Pi_{n-1}\textsf{-Collection}$ and $\mathsf{Ord}^{\mathcal{N}} \backslash \mathsf{Ord}^{\mathcal{M}}$ is nonempty and contains no least element, then $\mathcal{M} \models \Pi_{n+1}\textsf{-Collection}$.
\end{theorem}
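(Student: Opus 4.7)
I mirror Theorem~\ref{Th:BaseCaseMainTheorem}, producing some $\lambda \in \mathsf{Ord}^{\mathcal{N}} \setminus \mathsf{Ord}^{\mathcal{M}}$ such that $\mathcal{N}^* := \langle (L_\lambda^{\mathcal{N}})^*, \in^{\mathcal{N}}\rangle$ is a $\Sigma_{n+1}$-elementary end extension of $\mathcal{M}$ satisfying $\mathsf{KPI}+\mathsf{Separation}$, with $\mathsf{Ord}^{\mathcal{N}^*}\setminus\mathsf{Ord}^{\mathcal{M}}$ nonempty and lacking a least element. Full \textsf{Separation} in $\mathcal{N}^*$ delivers $\Pi_{n+3}\textsf{-Foundation}$, so Theorem~\ref{Th:SimpsonResult} applied at index $n+1$ (via its $\Pi_{n+3}\textsf{-Foundation}$ branch) then yields $\mathcal{M}\models\Pi_{n+1}\textsf{-Collection}$.

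First I assemble the ambient theories. Theorem~\ref{Th:TheoryInEndExtensionWithCollection} gives $\mathcal{N}\models\mathsf{M}+\Pi_{n-1}\textsf{-Collection}+\Sigma_n\textsf{-Separation}+\mathsf{V=L}$, so $\mathcal{N}$ satisfies $\Pi_n\cup\Sigma_n\textsf{-Foundation}$; Theorem~\ref{Th:SeparationInBaseModel} gives $\mathcal{M}\models\Sigma_{n+1}\textsf{-Separation}$, hence $\Pi_{n+1}\textsf{-Foundation}$. Lemma~\ref{Th:ArbitrarilyLargeReflectingL} (Pino) therefore applies in $\mathcal{M}$ at index $n$: the $\Pi_n$-definable class $C=\{\beta:L_\beta\prec_n\mathbb{V}\}$ is unbounded in $\mathsf{Ord}^{\mathcal{M}}$. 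Since $X\prec_n\mathbb{V}$ is $\Pi_n^{\mathsf{KPI}}$ and $\mathcal{M}\prec_{e,n+1}\mathcal{N}$ preserves $\Pi_n$-formulae, $C^{\mathcal{M}}\subseteq C^{\mathcal{N}}$, so $C^{\mathcal{N}}$ already meets $\mathsf{Ord}^{\mathcal{M}}$ unboundedly.

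The crucial step is an overspill argument producing $\lambda\in C^{\mathcal{N}}\setminus\mathsf{Ord}^{\mathcal{M}}$. Suppose for contradiction that $C^{\mathcal{N}}\subseteq\mathsf{Ord}^{\mathcal{M}}$. Any $\delta\in\mathsf{Ord}^{\mathcal{N}}\setminus\mathsf{Ord}^{\mathcal{M}}$ then strictly bounds $C^{\mathcal{N}}$, and $\Sigma_n\textsf{-Separation}$ in $\mathcal{N}$ (extending to $\Pi_n$-\textsf{Separation} within $\delta$ by complementation) turns $C^{\mathcal{N}}=C^{\mathcal{N}}\cap\delta$ into a set, whose supremum $\mu$ exists in $\mathcal{N}$. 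If $\mu\in\mathsf{Ord}^{\mathcal{M}}$, unboundedness of $C^{\mathcal{M}}$ supplies $\beta\in C^{\mathcal{M}}\subseteq C^{\mathcal{N}}$ with $\beta>\mu$, contradicting $\mu=\sup C^{\mathcal{N}}$; if $\mu\in\mathsf{Ord}^{\mathcal{N}}\setminus\mathsf{Ord}^{\mathcal{M}}$, every other new ordinal exceeds $\mathsf{Ord}^{\mathcal{M}}\supseteq C^{\mathcal{N}}$ and so is $\geq\mu$, making $\mu$ the least new ordinal and contradicting the hypothesis. Hence $\lambda$ exists.

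With $\lambda$ in hand, set $\mathcal{N}^*=\langle (L_\lambda^{\mathcal{N}})^*,\in^{\mathcal{N}}\rangle$. For each $\alpha\in L_\lambda^{\mathcal{N}}$, the $\Sigma_2$-formula ``$\exists\beta>\alpha(\beta$ is a cardinal$)$'', true in $\mathcal{N}\models\mathsf{M}$, reflects into $L_\lambda$ via $L_\lambda\prec_n\mathcal{N}$ (using $n\geq 2$). Since ``$\beta$ is a cardinal'' is $\Pi_1$-absolute, the cardinals of $\mathcal{N}$ below $\lambda$ are cofinal in $\lambda$, so $\lambda$ is a (limit) cardinal in $\mathcal{N}$. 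The $\Pi_2^{\mathsf{KPI}}$-assertion underlying Lemma~\ref{Th:LKappaAdmissible} (expressible with the $\Delta_1$ satisfaction predicate) transfers from $\mathcal{M}$ to $\mathcal{N}$ through $\prec_{e,2}$ and delivers $\mathcal{N}^*\models\mathsf{KPI}+\mathsf{Separation}+\mathsf{V=L}$; full \textsf{Separation} then implies $\Pi_{n+3}\textsf{-Foundation}$ in $\mathcal{N}^*$. The chain argument of Lemma~\ref{Th:ElementarityOfLKappaInExtension}, adapted to level $n+1$ using $\mathcal{N}^*\prec_n\mathcal{N}$, $\mathcal{M}\prec_{e,n+1}\mathcal{N}$, and $\mathcal{M}\subseteq_e\mathcal{N}^*$, gives $\mathcal{M}\prec_{e,n+1}\mathcal{N}^*$. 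The set $\mathsf{Ord}^{\mathcal{N}^*}\setminus\mathsf{Ord}^{\mathcal{M}}$ is populated by new ordinals strictly below $\lambda$, hence is nonempty, and inherits from $\mathcal{N}$ the absence of a least element. Theorem~\ref{Th:SimpsonResult} at index $n+1$ concludes $\mathcal{M}\models\Pi_{n+1}\textsf{-Collection}$.

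The principal obstacle is the overspill step producing the new reflecting ordinal $\lambda$; after that, all structural claims about $\mathcal{N}^*$ reduce to transferred versions of the base-case lemmas together with the chain-elementarity argument of the base case.
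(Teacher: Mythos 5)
Your proposal is correct and follows essentially the same route as the paper's proof: use Pino's lemma in $\mathcal{M}$ (justified via Theorem~\ref{Th:SeparationInBaseModel}) together with $\Sigma_n\textsf{-Separation}$ in $\mathcal{N}$ and the absence of a least new ordinal to overspill the class of $\Sigma_n$-reflecting levels $L_\beta \prec_n \mathbb{V}$ past $\mathsf{Ord}^{\mathcal{M}}$, then run the chain-elementarity argument and apply Theorem~\ref{Th:SimpsonResult} at index $n+1$ to $\langle (L_\lambda^{\mathcal{N}})^*, \in^{\mathcal{N}}\rangle$. The only cosmetic differences are that the paper performs the overspill inside $L_\kappa$ for a new cardinal $\kappa$ rather than by contradiction against an arbitrary new bound, and it obtains $\mathsf{KPI}+\textsf{Separation}$ in $L_\lambda^{\mathcal{N}}$ directly from $L_\lambda^{\mathcal{N}}$ being a set with $n \geq 2$, whereas you take a (valid but unnecessary) detour through showing $\lambda$ is a cardinal.
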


\begin{proof}
Let $\mathcal{N}= \langle N, \in^{\mathcal{N}} \rangle$ be such that $\mathcal{M} \prec_{e, n+1} \mathcal{N}$, $\mathcal{N} \models \Pi_{n-1}\textsf{-Collection}$ and $\mathsf{Ord}^{\mathcal{N}} \backslash \mathsf{Ord}^{\mathcal{M}}$ is nonempty and contains no least element. By Theorem \ref{Th:TheoryInEndExtensionWithCollection},
\[
\mathcal{N} \models \mathsf{M}+\Pi_{n-1}\textsf{-Collection}+\Sigma_n\textsf{-Separation}+\mathsf{V=L}.
\]
Using Lemma \ref{Th:NewCardinalInExtension}, let $\kappa \in \mathsf{Ord}^{\mathcal{N}} \backslash \mathsf{Ord}^{\mathcal{M}}$ be such that $\mathcal{N} \models (\kappa \textrm{ is a cardinal})$.

Work inside $\mathcal{N}$. Consider
\[
A= \{ X \in L_\kappa \mid \exists \beta(X= L_\beta) \land (X \prec_n \mathbb{V})\}, 
\]
which is a set by $\Sigma_n\textsf{-Separation}$. Let $B= \{ \xi \in \kappa \mid (\exists x \in A)(\exists \beta \in x)(\beta \in \kappa \land \xi \leq \beta)\}$. Let $\eta= \sup(B)$. 

Work in the meta-theory again. By Lemma \ref{Th:ArbitrarilyLargeReflectingL}, $\mathsf{Ord}^{\mathcal{M}} \subseteq B^*$. And, $B^* \cap (\mathsf{Ord}^{\mathcal{N}} \backslash \mathsf{Ord}^{\mathcal{M}}) \neq \emptyset$, otherwise $\eta$ would be the least element of $\mathsf{Ord}^{\mathcal{N}} \backslash \mathsf{Ord}^{\mathcal{M}}$. Let $\alpha \in \mathsf{Ord}^{\mathcal{N}} \backslash \mathsf{Ord}^{\mathcal{M}}$ be such that $L_\alpha^{\mathcal{N}} \in A^*$. Therefore,
\[
\mathcal{M} \prec_{e, n} \langle (L_\alpha^{\mathcal{N}})^*, \in^{\mathcal{N}} \rangle \prec_{e, n} \mathcal{N}.
\]
Let $\phi(x, \vec{z})$ be a $\Pi_n$-formulae and let $\vec{a} \in M$. Suppose that $\langle (L_\alpha^{\mathcal{N}})^*, \in^{\mathcal{N}} \rangle \models \exists x \phi(x, \vec{a})$. Let $b \in (L_\alpha^{\mathcal{N}})$ be such that $\langle (L_\alpha^{\mathcal{N}})^*, \in^{\mathcal{N}} \rangle \models \phi(b, \vec{a})$. So, $\mathcal{N} \models \exists x \phi(x, \vec{a})$. And, since $\mathcal{M} \prec_{e, n+1} \mathcal{N}$, $\mathcal{M} \models \exists x \phi(x, \vec{a})$. This shows that $\mathcal{M} \prec_{e, n+1} \langle (L_\alpha^{\mathcal{N}})^*, \in^{\mathcal{N}} \rangle$. Note that, since $n \geq 2$ and $L_\alpha^{\mathcal{N}}$ is a set in $\mathcal{N}$,
\[
\langle (L_\alpha^{\mathcal{N}})^*, \in^{\mathcal{N}} \rangle \models \mathsf{KPI}+\textsf{Separation}.
\]
Therefore, by Theorem \ref{Th:SimpsonResult}, $\mathcal{M} \models \Pi_{n+1}\textsf{-Collection}$.       	
\Square
\end{proof}

Theorem \ref{Th:GeneralCaseOfMainTheorem} allows us to generalise Theorem \ref{Th:CounterexampleKaufmannQuestion} and provide a negative answer to Question \ref{Q:GeneralisedKaufmannCloteQuestion} by showing that for $n \geq 2$, the minimum model of $\mathsf{M}+\Pi_n\textsf{-Collection}$ has no proper $\Sigma_{n+1}$-elementary end extension that satisfies $\Pi_{n-1}\textsf{-Collection}$.

\begin{theorem}
Let $n \in \omega$ with $n \geq 2$. Let $\alpha \in \omega_1$ be such that $L_\alpha$ is the minimum model of $\mathsf{M}+\Pi_n\textsf{-Collection}$. There is no $\mathcal{N}= \langle N, \in^{\mathcal{N}} \rangle$ such that $\langle L_\alpha, \in \rangle \prec_{e, n+1} \mathcal{N}$, $N \neq L_\alpha$ and $\mathcal{N} \models \Pi_{n-1}\textsf{-Collection}$.
\end{theorem}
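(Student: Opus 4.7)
The plan is to mirror the strategy of Theorem~\ref{Th:CounterexampleKaufmannQuestion}, with Theorem~\ref{Th:GeneralCaseOfMainTheorem} playing the role of Theorem~\ref{Th:BaseCaseMainTheorem} as the main technical input. I would start by supposing, for a contradiction, that some $\mathcal{N} = \langle N, \in^{\mathcal{N}} \rangle$ satisfies $\langle L_\alpha, \in \rangle \prec_{e,n+1} \mathcal{N}$, $N \neq L_\alpha$ and $\mathcal{N} \models \Pi_{n-1}\textsf{-Collection}$. Since $n \geq 2$, Theorem~\ref{Th:TheoryInEndExtensionWithCollection} delivers
\[
\mathcal{N} \models \mathsf{M}+\Pi_{n-1}\textsf{-Collection}+\Sigma_n\textsf{-Separation}+\mathsf{V=L},
\]
so in particular $\mathcal{N} \models \mathsf{KPI}$, and the fact that $N \neq L_\alpha$ forces $\mathsf{Ord}^{\mathcal{N}} \backslash \mathsf{Ord}^{\langle L_\alpha, \in \rangle}$ to be nonempty. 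I would then split into two cases depending on whether this set of new ordinals has a least element.

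In the first case, suppose there is a least new ordinal $\gamma$. Using the $\Delta_1^{\mathsf{KP}}$-definability of the $L$-hierarchy and $\Sigma_1$-elementarity, one obtains $L_\beta^{\mathcal{N}} = L_\beta$ for every $\beta \in \mathsf{Ord}^{\langle L_\alpha, \in \rangle}$, and therefore $(L_\gamma^{\mathcal{N}})^* = L_\alpha$. Exactly as in the $n=1$ case (Theorem~\ref{Th:CounterexampleKaufmannQuestion}), this allows $\mathcal{N}$ to witness
\[
\mathcal{N} \models \exists x \left( (x \textrm{ is transitive}) \land (\langle x, \in \rangle \models \mathsf{M}+\Pi_n\textsf{-Collection}) \right),
\]
and, as this sentence is $\Sigma_1^{\mathsf{KPI}}$, it transfers downward through $\langle L_\alpha, \in \rangle \prec_{e,1} \mathcal{N}$, contradicting the fact that $L_\alpha$ is the \emph{minimum} transitive model of $\mathsf{M}+\Pi_n\textsf{-Collection}$. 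In the second case, $\mathsf{Ord}^{\mathcal{N}} \backslash \mathsf{Ord}^{\langle L_\alpha, \in \rangle}$ has no least element, and Theorem~\ref{Th:GeneralCaseOfMainTheorem} applies directly to the pair $(\langle L_\alpha, \in \rangle, \mathcal{N})$, giving $\langle L_\alpha, \in \rangle \models \Pi_{n+1}\textsf{-Collection}$. This contradicts Corollary~\ref{Th:SeparationOfMinimumModels}.

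The step I expect to require the most care is Case~1: specifically, verifying that $\mathcal{N}$ really does see $L_\gamma$ as a transitive model of the \emph{entire} scheme $\mathsf{M}+\Pi_n\textsf{-Collection}$, and that the resulting existential sentence can be pulled back to $\langle L_\alpha, \in \rangle$ through partial elementarity. The key ingredients here are that $\omega^{\mathcal{N}}$ is the standard $\omega$ (since $\langle L_\alpha, \in \rangle \subseteq_e \mathcal{N}$), so each instance of the scheme is evaluated correctly by the internal satisfaction predicate of $\mathcal{N}$, and that the existential ``there is a transitive model of $\mathsf{M}+\Pi_n\textsf{-Collection}$'' can be packaged as a single $\Sigma_1$-assertion in $\mathsf{KPI}$ via a Henkin-style satisfaction relation. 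Case~2 is by contrast a one-line deduction from the Main Theorem and the separation of minimum models guaranteed by Corollary~\ref{Th:SeparationOfMinimumModels}.
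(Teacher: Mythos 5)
Your proposal is correct and follows essentially the same route as the paper: assume such an $\mathcal{N}$ exists, invoke Theorem~\ref{Th:TheoryInEndExtensionWithCollection}, rule out a least new ordinal via the $\Sigma_1$-expressible assertion that a transitive model of $\mathsf{M}+\Pi_n\textsf{-Collection}$ exists (contradicting minimality of $L_\alpha$), and then derive $\Pi_{n+1}\textsf{-Collection}$ in $\langle L_\alpha, \in \rangle$, contradicting Corollary~\ref{Th:SeparationOfMinimumModels}. If anything, your citation of Theorem~\ref{Th:GeneralCaseOfMainTheorem} for the final step is the more accurate one, since the paper's direct appeal to Theorem~\ref{Th:SimpsonResult} would require $\mathcal{N}$ to satisfy $\Pi_n\textsf{-Collection}$ rather than the available $\Pi_{n-1}\textsf{-Collection}$.
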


\begin{proof}
Suppose, for a contradiction, that $\mathcal{N}= \langle N, \in^{\mathcal{N}} \rangle$ is such that $\langle L_\alpha, \in \rangle \prec_{e, n+1} \mathcal{N}$, $N \neq L_\alpha$ and $\mathcal{N} \models \Pi_{n-1}\textsf{-Collection}$. By Theorem \ref{Th:TheoryInEndExtensionWithCollection},
\[
\mathcal{N} \models \mathsf{M}+\Pi_{n-1}\textsf{-Collection}+\Sigma_n\textsf{-Separation}+\mathsf{V=L}
\]
Since $N\neq L_\alpha$, $\mathsf{Ord}^{\mathcal{N}} \backslash \mathsf{Ord}^{\langle L_\alpha, \in \rangle}$ is nonempty. To be able to apply Theorem \ref{Th:SimpsonResult}, we need to show that $\mathsf{Ord}^{\mathcal{N}} \backslash \mathsf{Ord}^{\langle L_\alpha, \in \rangle}$ has no least element. Suppose, for a contradiction, that $\gamma$ is the least element of $\mathsf{Ord}^{\mathcal{N}} \backslash \mathsf{Ord}^{\langle L_\alpha, \in \rangle}$. But then $(L_\gamma^{\mathcal{N}})^*= L_\alpha$ and 
\[
\mathcal{N} \models \exists x((x \textrm{ is transitive}) \land (\langle x, \in \rangle \models \mathsf{M}+\Pi_n\textsf{-Collection})).
\]
Since $\langle L_\alpha, \in \rangle \prec_{e, 1} \mathcal{N}$,
\[
\langle L_\alpha, \in \rangle \models \exists x((x \textrm{ is transitive}) \land (\langle x, \in \rangle \models \mathsf{M}+\Pi_n\textsf{-Collection})),
\]
which contradicts the fact that $L_\alpha$ is the minimum model of $\mathsf{M}+\Pi_n\textsf{-Collection}$. This shows that $\mathsf{Ord}^{\mathcal{N}} \backslash \mathsf{Ord}^{\langle L_\alpha, \in \rangle}$ is nonempty and contains no least element. Therefore, by Theorem \ref{Th:SimpsonResult}, $\langle L_\alpha, \in \rangle \models \Pi_{n+1}\textsf{-Collection}$, which contradicts Corollary \ref{Th:SeparationOfMinimumModels}.
\Square 
\end{proof}

This shows that the complexity of $\Pi_n\textsf{-Collection}$ in the L\'{e}vy hierarchy computed by Lemma \ref{Th:ComplexityOfCollection} is best possible.

\begin{corollary}
Let $n \in \omega$ with $n \geq 1$. Let $\alpha \in \omega_1$ be such that $L_\alpha$ is the minimum model of $\mathsf{M}+\Pi_{n+1}\textsf{-Collection}$. Then there is no collection of $\Sigma_{n+3}$-sentences, $\Gamma$, such that
\[
\langle L_\alpha, \in \rangle \models \Gamma \textrm{ and } \mathsf{M}+\Gamma \vdash \Pi_{n}\textsf{-Collection}.
\]
\Square
\end{corollary}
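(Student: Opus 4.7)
The plan is to argue by contradiction. Suppose a collection $\Gamma$ of $\Sigma_{n+3}$-sentences exists with $\langle L_\alpha, \in \rangle \models \Gamma$ and $\mathsf{M}+\Gamma \vdash \Pi_n\textsf{-Collection}$. I will manufacture a proper $\Sigma_{n+2}$-elementary end extension of $\langle L_\alpha, \in \rangle$ that satisfies $\Pi_n\textsf{-Collection}$. Such an extension is precisely what is prohibited by the preceding theorem, applied with $n+1$ in place of $n$ (which is legitimate since $n+1 \geq 2$), since $\langle L_\alpha, \in \rangle$ is the minimum model of $\mathsf{M}+\Pi_{n+1}\textsf{-Collection}$.

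For the existence of the end extension, I will invoke the $(\mathit{II})\Rightarrow (\mathit{I})$ direction of Kaufmann's Theorem (Theorem \ref{Th:KaufmannTheorem}) at index $n+1$. Theorem \ref{Th:ExistenceOfMinimumModels} guarantees $\alpha \in \omega_1$, so $\langle L_\alpha, \in \rangle$ is countable; moreover, it trivially satisfies $\mathsf{M}^-+\Pi_{n+1}\textsf{-Collection}$. Kaufmann's Theorem then supplies $\mathcal{N} = \langle N, \in^{\mathcal{N}} \rangle$ with $\langle L_\alpha, \in \rangle \prec_{e, n+2} \mathcal{N}$ and $N \neq L_\alpha$.

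It remains to verify $\mathcal{N} \models \mathsf{M}+\Gamma$, since by hypothesis this forces $\mathcal{N} \models \Pi_n\textsf{-Collection}$, contradicting the preceding theorem. The axioms of $\mathsf{M}$ are $\Pi_3$-sentences (the axioms of $\mathsf{M}^-$ are $\Pi_2$ and \textsf{Powerset} is $\Pi_3$), and since $n+2 \geq 3$, the relation $\langle L_\alpha, \in \rangle \prec_{e, n+2} \mathcal{N}$ transfers these sentences upward, yielding $\mathcal{N} \models \mathsf{M}$. Each $\sigma \in \Gamma$ has the form $\exists \vec{x}\, \phi(\vec{x})$ with $\phi$ a $\Pi_{n+2}$-formula; a witnessing tuple $\vec{b} \in L_\alpha$ in $\langle L_\alpha, \in \rangle$ remains a witness in $\mathcal{N}$ because $\Sigma_{n+2}$-elementarity preserves $\Pi_{n+2}$-formulas with parameters in $L_\alpha$ upward. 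Hence $\mathcal{N} \models \Gamma$, and the contradiction is secured.

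I do not anticipate a serious obstacle: the argument is a direct parallel to the proof of the earlier corollary (just after Theorem \ref{Th:CounterexampleKaufmannQuestion}) treating the $\Delta_0\textsf{-Collection}$ case. The only delicate bookkeeping is confirming that $\Sigma_{n+3}$ is exactly the complexity level preserved upward into a proper $\Sigma_{n+2}$-elementary end extension, and that the relevant axioms of $\mathsf{M}$ are already within the transferable fragment once $n \geq 1$.
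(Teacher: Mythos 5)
Your argument is correct and is exactly the intended derivation: apply the preceding theorem with $n+1$ in place of $n$, use Kaufmann's $(II)\Rightarrow(I)$ at level $n+1$ to get a proper $\Sigma_{n+2}$-elementary end extension of the countable minimum model, and observe that $\Sigma_{n+3}$-sentences (and the $\Pi_3$-axiomatized theory $\mathsf{M}$) transfer upward under $\Sigma_{n+2}$-elementarity, forcing the extension to satisfy $\Pi_n\textsf{-Collection}$ and yielding the contradiction. The paper leaves this proof implicit, and your bookkeeping of the complexity levels matches it.
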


\cite[Remark 2]{kau81} observes that there is a countable ordinal $\alpha$ such that $\langle L_\alpha, \in \rangle$ has proper $\Sigma_1$-elementary end extension that satisfies $\mathsf{KPI}$, but $\Pi_1\textsf{-Collection}$ does not hold in $\langle L_\alpha, \in \rangle$. This observation can be generalised to show that the assumption that the end extension contains no least new ordinal can not be eliminated from Theorem \ref{Th:MainTheorem}.

\begin{observation}
There exists $\alpha, \beta \in \omega_1$ such that $\alpha < \beta$, $\langle L_\alpha, \in \rangle \prec_{e, n+1} \langle L_\beta, \in \rangle$,\\ $\langle L_\beta, \in \rangle \models \Pi_{n+1}\textsf{-Collection}$ and $\Pi_{n+1}\textsf{-Collection}$ does not hold in $\langle L_\alpha, \in \rangle$.
\end{observation}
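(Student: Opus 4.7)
The plan is as follows. Take $\beta$ to be the countable ordinal witnessing that $\langle L_\beta,\in\rangle$ is the minimum model of $\mathsf{M}+\Pi_{n+1}\textsf{-Collection}$ (available via Theorem~\ref{Th:ExistenceOfMinimumModels}). Since $L_\beta$ is transitive, the full Foundation scheme and $\mathsf{V=L}$ hold there, so $\langle L_\beta,\in\rangle \models \mathsf{KPI}+\Pi_{n+1}\textsf{-Collection}+\Pi_{n+2}\textsf{-Foundation}+\mathsf{V=L}$. Applying Lemma~\ref{Th:ArbitrarilyLargeReflectingL} (with $n$ there replaced by $n+1$) inside $L_\beta$ produces cofinally many ordinals $\gamma<\beta$ for which $\langle L_\beta,\in\rangle \models (L_\gamma \prec_{n+1}\mathbb{V})$. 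Since the satisfaction predicate $\mathsf{Sat}_{\Sigma_{n+1}}$ is absolute between the transitive structures $\langle L_\gamma,\in\rangle$ and $\langle L_\beta,\in\rangle$, this internal assertion translates to external $\Sigma_{n+1}$-elementarity, $\langle L_\gamma,\in\rangle \prec_{n+1} \langle L_\beta,\in\rangle$. Let $\alpha$ be the least such $\gamma$.

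By construction $\alpha<\beta$, $\langle L_\alpha,\in\rangle \prec_{e,n+1}\langle L_\beta,\in\rangle$, and $\langle L_\beta,\in\rangle\models\Pi_{n+1}\textsf{-Collection}$. The remaining task is to show that $\Pi_{n+1}\textsf{-Collection}$ fails in $L_\alpha$.

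Suppose, for a contradiction, that $\langle L_\alpha,\in\rangle\models\Pi_{n+1}\textsf{-Collection}$. The axioms of $\mathsf{M}^-$ are $\Pi_2$, and $\Sigma_{n+1}$-elementarity (with $n+1\geq 2$) transfers them from $L_\beta$ to $L_\alpha$; together with the contradiction hypothesis this yields $\langle L_\alpha,\in\rangle \models \mathsf{KPI}+\Pi_{n+1}\textsf{-Collection}$. Since $L_\alpha$ is transitive, $\Pi_{n+2}\textsf{-Foundation}$ and $\mathsf{V=L}$ are automatic, so Lemma~\ref{Th:ArbitrarilyLargeReflectingL} applies inside $L_\alpha$ and supplies some $\gamma<\alpha$ with $\langle L_\gamma,\in\rangle \prec_{n+1} \langle L_\alpha,\in\rangle$. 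Composing with $\langle L_\alpha,\in\rangle \prec_{n+1} \langle L_\beta,\in\rangle$ yields $\langle L_\gamma,\in\rangle \prec_{n+1} \langle L_\beta,\in\rangle$, contradicting the minimality of $\alpha$.

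The main obstacle is the absoluteness bookkeeping needed to identify the internal statement ``$L_\gamma \prec_{n+1} \mathbb{V}$'' (expressed via $\mathsf{Sat}_{\Sigma_{n+1}}$ inside $L_\beta$) with external $\Sigma_{n+1}$-elementarity between $\langle L_\gamma,\in\rangle$ and $\langle L_\beta,\in\rangle$; once this identification is secured, the argument reduces to a clean minimality contradiction driven by two applications of Pino's lemma.
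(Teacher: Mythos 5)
Your argument is correct and follows essentially the same route as the paper: pick a countable $\beta$ with $\langle L_\beta,\in\rangle\models\mathsf{M}+\Pi_{n+1}\textsf{-Collection}$, use Pino's lemma (Lemma \ref{Th:ArbitrarilyLargeReflectingL}) to see that the set of $\gamma$ with $\langle L_\gamma,\in\rangle\prec_{e,n+1}\langle L_\beta,\in\rangle$ is nonempty, take $\alpha$ least, and apply Pino's lemma a second time inside $L_\alpha$ to refute $\Pi_{n+1}\textsf{-Collection}$ there via minimality. Your choice of $\beta$ as the minimum model is an unnecessary (but harmless) restriction, and your extra absoluteness bookkeeping just fills in details the paper leaves implicit.
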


\begin{proof}
Let $\beta \in \omega_1$ be such that $\langle L_\beta, \in \rangle \models \mathsf{M}+\Pi_{n+1}\textsf{-Collection}$. By Lemma  \ref{Th:ArbitrarilyLargeReflectingL}, the set $A= \{\gamma \in \omega_1 \mid \langle L_\gamma, \in \rangle \prec_{e, n+1} \langle L_\beta, \in \rangle \}$ is nonempty. Let $\alpha$ be the least element of $A$. Then, by Lemma \ref{Th:ArbitrarilyLargeReflectingL}, $\langle L_\alpha, \in \rangle$ does not satisfy $\Pi_{n+1}\textsf{-Collection}$.  
\Square
\end{proof}

Finally, we observe that Theorem \ref{Th:MainTheorem} yields a characterisation of the countable models of $\mathsf{MOST}+\mathsf{V=L}$ have a $\Sigma_{n+1}$-elementary end extension that satisfies $\Pi_{n-1}\textsf{-Collection}$ and contains a new ordinal but no least new ordinal. The following is a strengthening of \cite[Theorem 4.15]{mck25} can be obtained by replacing the crude compactness argument used in the proof \cite[Theorem 4.5]{mck25} with a more sophisticated Henkin model construction such a argument used in the proof of \cite[Theorem 2.2]{fri73}:

\begin{theorem} \label{Th:EndExtensionFromBarwiseCompactness}
Let $n \in \omega$ with $n \geq 1$. Let $S$ be a recursively enumerable $\mathcal{L}$-theory such that
\[
S \vdash \mathsf{KP}+\Pi_n\textsf{-Collection}+\Sigma_{n+1}\textsf{-Foundation},
\]
and let $\mathcal{M}= \langle M, \in^{\mathcal{M}} \rangle$ be a countable model of $S$. Then there exists $\mathcal{N}= \langle N, \in^{\mathcal{N}} \rangle$ such that $\mathcal{M} \prec_{e, n} \mathcal{M}$, $\mathcal{N} \models S$ and $\mathsf{Ord}^{\mathcal{N}} \backslash \mathsf{Ord}^{\mathcal{M}}$ is nonempty and contains no least element. \Square
\end{theorem}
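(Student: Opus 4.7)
The plan is to construct $\mathcal{N}$ via a stepwise Henkin-style construction, strengthening the compactness argument of \cite[Theorem 4.15]{mck25} along the lines of \cite[Theorem 2.2]{fri73}. Expand $\mathcal{L}$ to $\mathcal{L}^+$ by adjoining a constant $\bar a$ for every $a \in M$, countably many Henkin witnesses, and a distinguished sequence of constants $(c_i)_{i \in \omega}$ intended to name a strictly descending chain of ordinals sitting above every element of $\mathsf{Ord}^{\mathcal{M}}$. Take as base theory $T_0$ the union of $S$, the $\Sigma_n \cup \Pi_n$-diagram of $\mathcal{M}$ over the constants $\{\bar a : a \in M\}$, and the axioms asserting that each $c_i$ is an ordinal with $c_{i+1} \in c_i$ and $\bar\alpha \in c_i$ for every $\alpha \in \mathsf{Ord}^{\mathcal{M}}$. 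Consistency of $T_0$ is witnessed by $\mathcal{M}$ itself: any finite fragment mentions only finitely many $c_i$'s and finitely many $\bar\alpha$'s, and is realised in $\mathcal{M}$ by interpreting the $c_i$'s as a descending chain of ordinals above the maximum of the $\alpha$'s involved.

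The construction then enumerates the $\mathcal{L}^+$-sentences and interleaves two sorts of commitments: the usual Henkin steps that decide each sentence and supply a fresh witness for every existential formula, and an omitting-types step handling pairs $(a, \psi(x))$ with $a \in M$ and $\psi$ an $\mathcal{L}^+$-formula. At stage $k$ dealing with the pair $(a_k, \psi_k)$, if the current finite extension $T_k$ is consistent with $\exists x(x \in \bar{a_k} \wedge \psi_k(x))$, one selects $b \in a_k^{\mathcal{M}}$ such that $T_k \cup \{\exists x(x \in \bar{a_k} \wedge \psi_k(x) \wedge x = \bar b)\}$ remains consistent, and adds the strengthened sentence. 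The canonical term model $\mathcal{N}$ of the resulting complete Henkin theory $T^{*}$ then satisfies $\mathcal{N} \models S$, and the map $a \mapsto [\bar a]^{\mathcal{N}}$ witnesses that $\mathcal{M} \prec_{e, n} \mathcal{N}$: $\Sigma_n$-elementarity comes from the diagram, the end-extension property from the omitting-types steps, and the strictly descending ordinal chain $([c_0], [c_1], \ldots)$ ensures that $\mathsf{Ord}^{\mathcal{N}} \setminus \mathsf{Ord}^{\mathcal{M}}$ is nonempty without least element.

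The main obstacle is verifying the omitting-types step: at every stage $k$ where $T_k$ is consistent with $\exists x(x \in \bar{a_k} \wedge \psi_k(x))$, one must produce a suitable $b \in a_k^{\mathcal{M}}$. If no such $b$ existed, then $T_k$ would prove $\neg \psi_k(\bar b)$ for every $b \in a_k^{\mathcal{M}}$; because $T_k$ mentions only finitely many of the new constants $\bar a$ and $c_i$, one can reinterpret these constants inside $\mathcal{M}$ (exactly as in the consistency argument for $T_0$) to obtain an $\mathcal{M}$-internal $\Sigma_{n+1}$-definable nonempty subclass of $a_k$ whose elements, together with appropriate choices of $c_i$'s, furnish witnesses for $\psi_k$, and which has no $\in$-minimal member. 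This is precisely where $S \vdash \Sigma_{n+1}\textsf{-Foundation}$ delivers the required contradiction, supplying the witness $b$ and allowing the stage to proceed. This sharper, stage-by-stage exploitation of $\Sigma_{n+1}\textsf{-Foundation}$ is what distinguishes the refined construction from the coarser compactness argument of \cite[Theorem 4.5]{mck25} and allows the result to hold under the stated foundation hypothesis.
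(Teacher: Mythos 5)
The paper offers no proof of this theorem beyond the remark that it is obtained by replacing the compactness argument of \cite[Theorem 4.5]{mck25} with a Henkin construction in the style of \cite[Theorem 2.2]{fri73}, and your overall strategy (constants for elements of $M$, a descending chain of constants $c_i$ above $\mathsf{Ord}^{\mathcal{M}}$, Henkin steps interleaved with omitting-types steps to secure the end-extension property) is exactly that intended route. The proposal nevertheless has a genuine gap at its crucial step, the omitting-types step. First, ``$T_k$ is consistent with $\varphi$'' is an external, proof-theoretic property of an infinite theory containing the whole $\Sigma_n\cup\Pi_n$-diagram of $\mathcal{M}$; it is not a class definable in $\mathcal{M}$, so you cannot apply $\mathcal{M}$'s foundation scheme to anything built from it. To make the step work you must replace consistency by an $\mathcal{M}$-internal realizability notion (for every $\alpha\in\mathsf{Ord}^{\mathcal{M}}$ there are interpretations in $\mathcal{M}$ of the finitely many $c_i$ and Henkin constants so far committed, with the $c_i$ a descending chain of ordinals above $\alpha$, under which the finitely many added sentences are satisfied, evaluated via $\mathsf{Sat}_{\Sigma_n}$), and the complexity of this predicate is what the hypotheses of the theorem are calibrated to. Second, even granting such a notion, your mechanism for producing the witness $b\in a_k^{\mathcal{M}}$ is not an argument: you assert that the failure of all candidates yields a nonempty $\Sigma_{n+1}$-definable subclass of $a_k$ ``which has no $\in$-minimal member'' and invoke $\Sigma_{n+1}\textsf{-Foundation}$, but you never specify this class nor explain why it should lack a minimal element, and there is no reason it should. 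The step that actually produces $b$ is an application of $\Pi_n\textsf{-Collection}$ (equivalently $\Sigma_{n+1}\textsf{-Collection}$) at the level of the realizability predicate: if every $b\in a_k^{\mathcal{M}}$ fails, then for each $b$ there is an ordinal $\alpha_b$ above which the commitment $\bar b\in\bar a_k\wedge\psi_k(\bar b)$ is no longer realizable; collecting the $\alpha_b$ below a single $\gamma$ and then realizing $T_k\cup\{\exists x(x\in\bar a_k\wedge\psi_k(x))\}$ above $\gamma$ yields a contradiction. Foundation is needed elsewhere in the construction (for instance in verifying by induction that realizability is preserved through the stages), not at the point where you place it.

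There is a second, smaller gap: the descending chain $[c_0]\ni[c_1]\ni\cdots$ of new ordinals does not by itself show that $\mathsf{Ord}^{\mathcal{N}}\setminus\mathsf{Ord}^{\mathcal{M}}$ has no least element, since $\mathcal{N}$ could contain new ordinals lying below every $[c_i]$, and among those there could be a least one. The construction must additionally guarantee that the $c_i$ are downward cofinal in the new ordinals: for every closed term $t$ decided to be an ordinal not belonging to any $\bar\alpha$, some sentence $c_i\in t$ must eventually be added, and the consistency of doing so at each stage again has to be checked against the internal realizability notion. Your write-up does not address this requirement.
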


This allows us to show that a countable model, $\mathcal{M}$, of $\mathsf{MOST}+\mathsf{V=L}$ has a $\Sigma_n$-elementary end extension that satisfies $\Pi_{n-1}\textsf{-Collection}$ and contains a new ordinal but no least new ordinal if and only if $\mathcal{M}$ satisfies $\Pi_{n+1}\textsf{-Collection}$.

\begin{theorem}
Let $n \in \omega$ with $n \geq 1$. Let $\mathcal{M}= \langle M, \in^{\mathcal{M}} \rangle$ be a countable model of $\mathsf{MOST}+\mathsf{V=L}$. Then the following are equivalent:
\begin{itemize}
\item[(I)] $\mathcal{M} \models \Pi_{n+1}\textsf{-Collection}$;
\item[(II)] there exists $\mathcal{N}= \langle N, \in^{\mathcal{N}} \rangle$ such that $\mathcal{M} \prec_{e, n+1} \mathcal{N}$, $\mathcal{N} \models \Pi_{n-1}\textsf{-Collection}$ and $\mathsf{Ord}^{\mathcal{N}} \backslash \mathsf{Ord}^{\mathcal{M}}$ is nonempty and contains no least new element.
\end{itemize}
\end{theorem}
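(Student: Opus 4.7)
The plan is to establish each direction separately, using the Main Theorem (Theorem \ref{Th:MainTheorem}) for $(II) \Rightarrow (I)$ and Theorem \ref{Th:EndExtensionFromBarwiseCompactness} for $(I) \Rightarrow (II)$.

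For $(II) \Rightarrow (I)$, I would argue by induction on $k \in \{1, \ldots, n\}$ that $\mathcal{M} \models \Pi_{k+1}\textsf{-Collection}$. I first initialize by verifying that $\mathsf{MOST}+\mathsf{V=L}$ already proves $\Pi_1\textsf{-Collection}$: given an instance $(\forall x \in a)\exists y\, \phi(x, y)$ with $\phi \in \Pi_1$, $\mathsf{V=L}$ assigns each $x \in a$ a least-$L$-rank witness, and $\Sigma_1\textsf{-Separation}$ together with the closure of $\Sigma_1$ under bounded quantification available in $\mathsf{MOST}$ bounds these ranks into a set (an argument patterned on the Skolem-hull and Condensation manipulations of \S\ref{Sec:BaseCase}). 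This secures the base hypothesis $\mathcal{M} \models \mathsf{M}+\Pi_1\textsf{-Collection}+\mathsf{V=L}$. The case $k = 1$ then follows from Theorem \ref{Th:BaseCaseMainTheorem}, since the hypotheses of (II) for $n \geq 1$ restrict (as $2 \leq n+1$ and $0 \leq n-1$) to precisely those of Theorem \ref{Th:BaseCaseMainTheorem}. For $k \geq 2$, the hypotheses of (II) similarly restrict to $\mathcal{M} \prec_{e, k+1} \mathcal{N}$ and $\mathcal{N} \models \Pi_{k-1}\textsf{-Collection}$; combined with the inductively established $\mathcal{M} \models \mathsf{M}+\Pi_k\textsf{-Collection}+\mathsf{V=L}$, Theorem \ref{Th:GeneralCaseOfMainTheorem} delivers $\mathcal{M} \models \Pi_{k+1}\textsf{-Collection}$. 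Iterating to $k = n$ completes this direction.

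For $(I) \Rightarrow (II)$, I would invoke Theorem \ref{Th:EndExtensionFromBarwiseCompactness} with $n+1$ in place of its internal parameter. Take $S = \mathsf{MOST}+\mathsf{V=L}+\Pi_{n+1}\textsf{-Collection}$, a recursively enumerable theory which (I) ensures $\mathcal{M}$ models. The required verification is $S \vdash \mathsf{KP}+\Pi_{n+1}\textsf{-Collection}+\Sigma_{n+2}\textsf{-Foundation}$: the $\mathsf{KP}$ part follows from $\mathsf{MOST}+\mathsf{V=L}$ (with $\Sigma_1\textsf{-Separation}$ supplying the needed $\Pi_1\textsf{-Foundation}$); $\Pi_{n+1}\textsf{-Collection}$ is explicit in $S$; and $\Sigma_{n+2}\textsf{-Foundation}$ is derived by exploiting the existential witness of a nonempty $\Sigma_{n+2}$-class to localize it via $\mathsf{V=L}$ and the Condensation Lemma to a subclass of some $L_\beta$, at which point the $\Delta_{n+2}\textsf{-Separation}$ consequence of $\Pi_{n+1}\textsf{-Collection}$ turns the subclass into a set and \textsf{Set-Foundation} furnishes an $\in$-minimal element. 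Theorem \ref{Th:EndExtensionFromBarwiseCompactness} then outputs the desired $\mathcal{N}$ with $\mathcal{M} \prec_{e, n+1} \mathcal{N}$, $\mathcal{N} \models S$ (so a fortiori $\mathcal{N} \models \Pi_{n-1}\textsf{-Collection}$), and $\mathsf{Ord}^{\mathcal{N}} \backslash \mathsf{Ord}^{\mathcal{M}}$ nonempty with no least element.

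The main obstacle I anticipate is the careful verification of $\Sigma_{n+2}\textsf{-Foundation}$ in $\mathsf{MOST}+\mathsf{V=L}+\Pi_{n+1}\textsf{-Collection}$: Theorem \ref{Th:CollectionDoesNotProveFoundation} warns that $\Pi_{n+2}\textsf{-Foundation}$ is \emph{not} provable there, so the argument must genuinely leverage the $\Sigma/\Pi$ asymmetry via the existential witness in concert with $\mathsf{V=L}$ and Condensation, rather than attempting to obtain the full foundation scheme at level $n+2$. The base-case derivation $\mathsf{MOST}+\mathsf{V=L} \vdash \Pi_1\textsf{-Collection}$ is a secondary technical point that rests on the same Skolem-hull and Condensation ingredients already exploited throughout \S\ref{Sec:BaseCase}.
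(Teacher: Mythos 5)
Both directions of your argument rest on overestimating the strength of $\mathsf{MOST}+\mathsf{V=L}$ (respectively, of $\mathsf{MOST}+\mathsf{V=L}+\Pi_{n+1}\textsf{-Collection}$), and in each case the paper routes around the difficulty in a way your proof misses. For $(II)\Rightarrow(I)$, your base case asserts $\mathsf{MOST}+\mathsf{V=L}\vdash\Pi_1\textsf{-Collection}$. This is false: the paper says only that $\mathsf{MOST}$ is a \emph{subtheory} of $\mathsf{M}+\Pi_1\textsf{-Collection}+\mathsf{V=L}$, and, for example, $\langle L_{\aleph_\omega},\in\rangle$ (computed in $L$) satisfies $\mathsf{MOST}+\mathsf{V=L}$ --- every set reflects into some $L_{\aleph_k}$ with $L_{\aleph_{k+1}}\prec_1 L_{\aleph_\omega}$, which yields $\Delta_0\textsf{-Collection}$ and $\Sigma_1\textsf{-Separation}$ --- while the cofinal map $k\mapsto\aleph_k$ witnesses the failure of $\Pi_1\textsf{-Collection}$ there. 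Your sketch breaks at exactly the expected point: bounding the least-$L$-rank witness function on $a$ \emph{is} an instance of $\Sigma_2$-Replacement, so no amount of Skolem-hull and Condensation manipulation can produce the bound without assuming what is to be proved. The repair is to invoke Theorem \ref{Th:KaufmannTheorem}: condition (II) supplies a proper $\Sigma_{n+1}$-elementary end extension of $\mathcal{M}\models\mathsf{M}^-+\mathsf{V=L}$, so Kaufmann's theorem gives $\mathcal{M}\models\Pi_n\textsf{-Collection}$ in one step, after which a single application of Theorem \ref{Th:MainTheorem} (or your induction, now correctly initialized) finishes; this is what the paper does.

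The gap in $(I)\Rightarrow(II)$ is more serious. To apply Theorem \ref{Th:EndExtensionFromBarwiseCompactness} with internal parameter $n+1$ you must show $S\vdash\Sigma_{n+2}\textsf{-Foundation}$, and your localization argument does not do this: having fixed $\beta$ so that some $a$ in the $\Sigma_{n+2}$ class $A=\{x:\exists y\,\psi(x,y)\}$ has a witness in $L_\beta$, the set $\{x\in\mathrm{TC}(\{a\})\cup\{a\}:(\exists y\in L_\beta)\psi(x,y)\}$ does exist by $\Pi_{n+1}\textsf{-Separation}$, but its $\in$-minimal element need not be $\in$-minimal in $A$, because elements below it may belong to $A$ only via witnesses of $L$-rank greater than $\beta$. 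Chasing those witnesses generates a descending $\in$-sequence whose termination is precisely the instance of foundation you are trying to prove. This mirrors the arithmetic fact that $B\Sigma_{n+2}$ does not prove $I\Sigma_{n+2}$ (equivalently, the $\Sigma_{n+2}$ least-number principle), and Theorem \ref{Th:CollectionDoesNotProveFoundation} should be read as a warning that no such derivation is available. The paper's proof avoids the issue with a case split you omit: first use Theorem \ref{Th:KaufmannTheorem} to obtain $\mathcal{M}\prec_{e,n+2}\mathcal{N}$ with $M\neq N$ and Theorem \ref{Th:TheoryInEndExtensionWithoutCollection} to see $\mathcal{N}\models\Pi_{n-1}\textsf{-Collection}$; if $\mathsf{Ord}^{\mathcal{N}}\setminus\mathsf{Ord}^{\mathcal{M}}$ has no least element this $\mathcal{N}$ already witnesses (II), and if it has a least element then $\mathcal{M}$ satisfies full \textsf{Foundation}, which is what legitimizes the subsequent appeal to Theorem \ref{Th:EndExtensionFromBarwiseCompactness}. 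Without that dichotomy your application of Theorem \ref{Th:EndExtensionFromBarwiseCompactness} is unjustified.
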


\begin{proof}
$(II) \Rightarrow (I)$ follows immediately from Theorems \ref{Th:KaufmannTheorem} and \ref{Th:MainTheorem}. To see that $(I) \Rightarrow (II)$, using Theorem \ref{Th:KaufmannTheorem}, let $\mathcal{N}= \langle N, \in^{\mathcal{N}} \rangle$ be such that $\mathcal{M} \prec_{e, n+2} \mathcal{N}$ and $M \neq N$. By Theorem \ref{Th:TheoryInEndExtensionWithoutCollection}, $\mathcal{N} \models \Pi_{n-1}\textsf{-Collection}$. Therefore, if $\mathsf{Ord}^{\mathcal{N}} \backslash \mathsf{Ord}^{\mathcal{M}}$ contains no least element, then we are done. So, assume that $\mathsf{Ord}^{\mathcal{N}} \backslash \mathsf{Ord}^{\mathcal{M}}$ has a least element. In this case, $\mathcal{M} \models \mathsf{Foundation}$ and, using Theorem \ref{Th:EndExtensionFromBarwiseCompactness}, there exists $\mathcal{N}_0= \langle N_0, \in^{\mathcal{N}_0} \rangle$ such that $\mathcal{M} \prec_{e, n+1} \mathcal{N}_0$, $\mathcal{N}_0 \models \Pi_{n+1}\textsf{-Collection}$ and $\mathsf{Ord}^{\mathcal{N}_0} \backslash \mathsf{Ord}^{\mathcal{M}}$ is nonempty and contains no least element. 
\Square
\end{proof}

\section[Questions]{Questions}

The Main Theorem assumes that \textsf{Powerset} holds in the structure being end extended, and the proof presented here uses this assumption.

\begin{question} \label{Q:NecessityOfPowerset}
Let $n \in \omega$ with $n \geq 1$. Let $\mathcal{M} \models \mathsf{KPI}+\Pi_n\textsf{-Collection}+\mathsf{V=L}$. If $\mathcal{N}$ is a $\Sigma_{n+1}$-elementary end extension of $\mathcal{M}$ that satisfies $\Pi_{n-1}\textsf{-Collection}$ and contains a new ordinal but no least new ordinal, does $\Pi_{n+1}\textsf{-Collection}$ necessarily hold in $\mathcal{M}$?
\end{question}

If Question \ref{Q:NecessityOfPowerset} has a negative answer, can a counterexample be found amongst the minimum models?

\begin{question}
Let $n \in \omega$ with $n \geq 1$. Does the minimum model of $\mathsf{KPI}+\Pi_n\textsf{-Collection}$ have a proper $\Sigma_{n+1}$-elementary end extension that satisfies $\Pi_{n-1}\textsf{-Collection}$?
\end{question}

In \cite{mck25}, it is shown that every countable model of $\mathsf{KP}+\Pi_n\textsf{-Collection}+\Sigma_{n+1}\textsf{-Foundation}$ has a proper elementary end extension that satisfies $\mathsf{KP}+\Pi_n\textsf{-Collection}+\Sigma_{n+1}\textsf{-Foundation}$. The following two questions from \cite{mck25} remain open:

\begin{question} \label{Q:OldQuestion}
Does every countable model of $\mathsf{KPI}+\Pi_n\textsf{-Collection}$ have a proper $\Sigma_n$-elementary end extension that satisfies $\mathsf{KPI}+\Pi_n\textsf{-Collection}$?
\end{question}

And, if Question \ref{Q:OldQuestion} has a negative answer:

\begin{question} \label{Q:OldQuestion2}
Does every countable model of $\mathsf{M}+\Pi_n\textsf{-Collection}$ have a proper $\Sigma_n$-elementary end extension that satisfies $\mathsf{M}+\Pi_n\textsf{-Collection}$?
\end{question}

\bibliographystyle{alpha}
\bibliography{.}           

\end{document}